\documentclass[a4paper,11pt]{amsart}

\usepackage{amsmath}
\usepackage{amsthm}
\usepackage{mathtools}
\usepackage{amssymb}
\usepackage{amsfonts}
\usepackage{xcolor}
\usepackage[bookmarks=true,hyperindex,pdftex,colorlinks,citecolor=blue]{hyperref}
\usepackage{enumitem}
\usepackage{marginnote}
\usepackage{soul}
\usepackage{mathrsfs}

\usepackage[a4paper,lmargin=3cm,rmargin=3cm,tmargin=4cm,bmargin=4cm,marginparwidth=2.8cm,marginparsep=1mm]{geometry}

\theoremstyle{plain}
\newtheorem{theorem}{Theorem}[section]
\newtheorem*{theorem*}{Theorem}

\newtheorem{lemma}[theorem]{Lemma}
\newtheorem{corollary}[theorem]{Corollary}
\newtheorem{proposition}[theorem]{Proposition}

\theoremstyle{definition}
\newtheorem*{definition*}{Definition}
\newtheorem{definition}[theorem]{Definition}

\theoremstyle{remark}
\newtheorem{remark}[theorem]{Remark}

\numberwithin{equation}{section}

\def \one {\mathsf{1}}

\def \B {\mathcal{B}}

\def \R {\mathbb{R}}

\def \H {\mathcal{H}}
\def \M {\mathcal{M}}
\def \N {\mathbb{N}}

\def \F {\mathcal{F}}

\def \cal {\mathcal}

\def \eps {\varepsilon}

\def \sbs {\subseteq}
\def \sps {\supseteq}

\DeclareMathOperator{\diam}{diam}
\DeclareMathOperator{\Lip}{Lip}

\DeclareMathOperator{\supp}{supp}

\DeclareMathOperator{\spn}{span}

\begin{document}

	\title[Complementation of free spaces in their biduals]{On complementation and $L$-embedding of Lipschitz-free spaces in their biduals}%
	

	\author[Braga]{Bruno M. Braga}
	\address[B. M. Braga]{IMPA, Estrada Dona Castorina 110, 22460-320, Rio de Janeiro, Brazil}
	\email{demendoncabraga@gmail.com}
	\urladdr{\url{https://sites.google.com/site/demendoncabraga}}
	\thanks  {B. M. Braga  was partially supported by FAPERJ, grant E-26/204.317/2025,  by CNPq, grant 303182/2026-1, and by Serrapilheira, grant R-2501-51476.}

	\author[Gartland]{C. Gartland}
	\address[C. Gartland]{Department of Mathematics and Statistics, University of
		North Carolina at Charlotte, Charlotte, North Carolina, USA.}
	\email{cgartla1@charlotte.edu}
	\urladdr{\url{https://chrisgartland.wordpress.com/}}
	\thanks{C. Gartland was partially supported by NSF Award DMS-2555144}
	
	\author[Lancien]{G. Lancien}
	\address[G. Lancien]{Universit\'e Marie et Louis Pasteur, CNRS, Laboratoire de mathématiques de Besançon  (UMR 6623), F-25000 Besan\c con, France.}
	\email{gilles.lancien@umlp.fr}
	\urladdr{\url{https://glancien.pages.math.cnrs.fr/}}
	\thanks{G. Lancien was partially supported by the French ANR project No. ANR-24-CE40-0892-01.}
	
	\author[Motakis]{P. Motakis}
	\address[P. Motakis]{Department of Mathematics and Statistics, York University, 4700 Keele Street, Toronto, Ontario, M3J 1P3, Canada.}
	\email{pmotakis@yorku.ca}
	\urladdr{\url{https://pmotakis.mathstats.yorku.ca/}}
	\thanks{P. Motakis was partially supported by NSERC Grant RGPIN-2021-03639.}
	
	\author[Perneck\'a]{E. Perneck\'a}
	\address[E. Perneck\'a]{Faculty of Information Technology, Czech Technical University in Prague, Th\'akurova 9, 160 00, Prague 6, Czech Republic.}
	\email{eva.pernecka@fit.cvut.cz}
	\urladdr{\url{https://fit.cvut.cz/en/faculty/people/5300-eva-pernecka}}
	\thanks{}
	
	\author[Schlumprecht]{Th. Schlumprecht}
	\address[Th. Schlumprecht]{Texas A\&M University, College Station, TX 77843,
		USA and Faculty of Electrical Engineering, Czech Technical University in Prague, Zikova 4, 166 27, Prague, Czech Republic.}
	\email{t-schlumprecht@tamu.edu}
	\urladdr{\url{https://people.tamu.edu/~t-schlumprecht/}}
	\thanks{Th. Schlumprecht was partially supported by NSF Award DMS-2349322}
	
	\begin{abstract}
		A closed subspace $E$ of a Banach space $F$ is \emph{$L$-embedded in $F$} if there is a projection $P\colon F\to E$ such that $\|x\|=\|Px\|+\|x-Px\|$ for all $x\in F$. We show that if $X$ is a proper, geodesic metric space with locally finite $\H^1$-measure, then its Lipschitz-free space, denoted by $\F(X)$, is $L$-embedded in its bidual. We also extend an argument by C\'uth, Kalenda, and Kaplick\'y, to show that if $E$ is a finite dimensional normed space and $C\ge 1$ is such that  every subspace of $E$ is $C$-complemented in $E$, then $\cal F(E)$ is $C$-complemented in its bidual. In particular, for every two-dimensional normed space $E$, $\cal F(E)$ is $1$-complemented in its bidual.
	\end{abstract}
	
	\maketitle
	
	\section{Introduction}
	For a metric space $M$, we denote by $\cal F(M)$ its Lipschitz-free space (see Section \ref{subsec:LF} for the definition). After the seminal paper by Godefroy and Kalton \cite{GodefroyKalton2003}, the problem of whether a Lipschitz-free space is complemented in its bidual has gained considerable attention. This is motivated by  its potential applications to the Lipschitz classification of Banach spaces: the complementation of $\cal F(\ell_1)$ in its bidual would imply that the Lipschitz geometry of $\ell_1$ characterizes its linear isomorphism type (see \cite[Section 4.7]{BaudierLancienBook}). Until recently, the main result on this subject was obtained by C\'uth, Kalenda, and Kaplick\'y , who  proved that for any $d\in \N$, the space $\cal F(\ell_2^d)$ is $1$-complemented in its bidual (see \cite[Theorem 1.1]{CKK2019}). 
	
	In a very recent preprint \cite{BGLPPS2026}, we have studied a stronger form of complementability called $L$-embeddability (see definition in Section \ref{subsec:L-embeddings} and in the abstract). In particular, we showed that if $X$ is a nonzero Banach space, then its Lipschitz-free space is $L$-embedded in its bidual if and only if $X$ has dimension one (\cite[Theorem A]{BGLPPS2026}). The main result of the present paper shows that, in the general setting of metric spaces, the $L$-embeddability of the Lipschitz-free space in its bidual also seems   to be a one dimensional phenomenon (the relevant definitions for the Hausdorff measure $\H^1$ can be found in \S\ref{subsec:H_1}):
	
	\begin{theorem*}[Theorem~\ref{thm:main}]
		Let $X$ be a proper, geodesic metric space with locally finite $\H^1$-measure. Then $\F(X)$ is $L$-embedded in its bidual, and hence is weakly sequentially complete.
	\end{theorem*}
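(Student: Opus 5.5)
We plan to reduce the statement to a known $L$-embedding result for $L_1$-type spaces, via a representation of $\F(X)$ built from the one-dimensional structure of $X$. The heuristic is the interval case: $\F([0,1])\cong L_1[0,1]$, and $L_1(\mu)$ is $L$-embedded in its bidual. For a general proper geodesic $X$ with locally finite $\H^1$-measure, we will show that $\F(X)$ is isometric to a quotient of, or a subspace of, an $L_1$-space $L_1(\H^1|_X)$ (measured along rectifiable curves) in a way that preserves $L$-embeddedness. Concretely, we will fix a base point $0$ and, using local finiteness of $\H^1$ and properness, write $X$ as a countable union of rectifiable curves (Lipschitz images of intervals) up to an $\H^1$-null set. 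The key structural input we expect to need is this: every Lipschitz $f$ on $X$ is differentiable $\H^1$-a.e. along these curves, with a well-defined ``metric derivative'' $f'\in L_\infty(\H^1)$. Moreover, $\|f\|_{\Lip}=\|f'\|_\infty$, because $X$ is geodesic and geodesics are rectifiable curves of finite $\H^1$-length, along which $|f(x)-f(y)|\le\int_\gamma |f'|\,d\H^1$.

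The map $f\mapsto f'$ is then a linear isometric embedding $T\colon \Lip_0(X)\to L_\infty(\H^1|_X)$ (after orienting the curves measurably). We will check that $T$ is weak$^*$-to-weak$^*$ continuous, using properness: bounded nets converging pointwise converge uniformly on compacts, and the derivatives converge weak$^*$ after integrating against $L_1$ functions supported on curves. Hence $T=S^*$ for a quotient map $S\colon L_1(\H^1)\to\F(X)$, so $\F(X)\cong L_1(\H^1)/\ker S$. Equivalently, $\F(X)^*$ is isometric to a weak$^*$-closed subspace $Y=T(\Lip_0(X))$ of $L_\infty$.

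We will not work with the quotient directly, since quotients of $L$-embedded spaces need not be $L$-embedded. Instead we will exploit the known criterion (Harmand--Werner--Werner): $L_1/N$ is $L$-embedded when $N^\perp\subseteq L_\infty$ is suitably ``nice''. In particular it suffices that the $L$-projection on $L_1^{**}=L_1\oplus_1 L_1^s$ maps $N^{\perp\perp}$ into itself, i.e.\ that $N^{\perp\perp}$ is invariant under the band projection onto the singular part. So the plan is as follows. (1) Construct $T$ and $S$ as above and identify $\ker S$. (2) Describe $(\ker S)^{\perp\perp}\subseteq L_\infty^*$. (3) Show that if $\phi\in L_\infty^*$ annihilates $Y=\operatorname{ran}T$, then so do its normal and singular parts. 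Step (3) is where one-dimensionality enters. The annihilator of $Y$ consists of functionals vanishing on all ``gradients''. Since $Y$ is cut out by the cycle conditions, namely that $\int_\gamma g=0$ on closed curves, together with path-independence, we expect $Y^\perp$ to be the weak$^*$ closure of cycle-type measures. We will then show that the singular part of a finitely additive ``cycle'' is again a cycle, by localizing: the singular part is concentrated on sets of arbitrarily small measure, and the cycle constraints are local flux-conservation conditions at points. Those conditions are preserved under restriction-limits because vertices where curves meet are countable, or at least $\H^1$-small, in a locally finite $\H^1$ space.

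The main obstacle is step (3), together with making step (1) rigorous. The difficulty is that $X$ need not be a graph: it can be a dendrite-like or Sierpinski-like set with infinitely many branch points. Flux conservation is then harder to localize, and the measurable orientation of curves must be handled carefully. We would first try to prove the result for metric graphs, where everything is explicit: $\F$ of a tree is $L_1$, and for graphs with cycles it is $L_1$ modulo finitely many cycle vectors, which is clearly $L$-embedded. We would then approximate $X$ by an increasing sequence of finite geodesic subgraphs whose $\H^1$-measure exhausts $X$ locally. Finally, we would pass to the limit using the fact that the $L$-projections are compatible and norm-one, and that $\F(X)$ is the closure of the union of the corresponding free spaces. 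Weak sequential completeness then follows from the standard fact that $L$-embedded spaces are weakly sequentially complete.
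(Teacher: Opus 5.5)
Your overall plan is the right shape, and two ingredients are sound. First, the criterion you quote is correct: if the Yosida--Hewitt projection $Q$ on $L_1^{**}$ satisfies $Q(N^{\perp\perp})\sbs N^{\perp\perp}$, then $Q(N^{\perp\perp})\sbs N^{\perp\perp}\cap L_1=N$, and $Q$ descends to an $L$-projection of $(L_1/N)^{**}=L_1^{**}/N^{\perp\perp}$ onto $L_1/N$. Second, step (1) can be made rigorous. However, $\Lip(f)\le\|f'\|_\infty$ is not just ``integrate along a geodesic''. Since $f'$ is defined along your chosen covering curves, you must show that at a.e.\ point of an arbitrary geodesic the derivative along the geodesic is dominated by the one along the covering curve. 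This needs Kirchheim's theorems and is the content of Lemma~\ref{lem:W-mapgeodesics}.

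The genuine gap is step (3), which carries the whole theorem and which you do not prove. Describing $Y^\perp$ as the weak$^*$ closure of cycles adds nothing, since $Y^\perp=N^{\perp\perp}$ is the weak$^*$ closure of $N$ in any case. So everything rests on ``the singular part of a cycle is a cycle'', for which you give only a heuristic, and it is not enough:
\begin{itemize}
\item Locality of a flux-conservation constraint cannot be what makes it work. By \cite{CKK2019}, $\F(\R^2)$ is $L_1(\R^2,\ell_2^2)$ modulo divergence-free fields, the model local conservation law, yet $\F(\R^2)$ is not $L$-embedded \cite{BGLPPS2026}.
\item Smallness of a vertex set gives no control. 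Purely finitely additive functionals live on sets of arbitrarily small measure and can concentrate on shrinking neighbourhoods of exactly those vertices.
\item Your fallback fails outright. $L$-embeddedness is not preserved under closures of increasing unions, even with compatible norm-one $L$-projections. Both $c_0=\overline{\bigcup_n\ell_\infty^n}$ and $\F(\R^2)=\overline{\bigcup_n\F(D_n)}$, with $D_n$ finite sets increasing to a dense set, are closures of trivially $L$-embedded spaces with compatible identity projections, and neither is $L$-embedded.
\end{itemize}

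What is missing is a property of the kernel strong enough to feed into Godefroy's theorem and checkable without describing the kernel at all. The paper writes $\F(X)=L_1(T,\lambda_T)/\ker\widehat\gamma$ via a tree parametrization (Theorem~\ref{thm:isometric}). It then proves that $\ker\widehat\gamma$ is closed for local convergence in measure (Theorem~\ref{thm:kernel}) and concludes with Theorem~\ref{thm:L-embedded} and Corollary~\ref{cor:localconvergence}.

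The closedness is a soft metric argument (Proposition~\ref{prop:vanishingsupport}). Suppose $\phi_n\to0$ locally in measure and $\widehat\gamma(\phi_n)=\mu$ for all $n$. Egoroff's theorem and step-function approximation put $\mu$ within $1/N$ of $\mu_1+\mu_2$, where $\mu_1\in\spn\{\delta_{y_i}-\delta_{x_i}\}$ with $\sum_i d(x_i,y_i)<1/N$ (coefficients possibly huge) and $\mu_2$ lives outside $B_N(0)$. Gluing each $x_i$ to $y_i$ and collapsing the far region kills $\mu_1+\mu_2$ while almost preserving the norm of a finitely supported approximant of $\mu$; this forces $\mu=0$.

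This step uses nothing one-dimensional. One-dimensionality enters only in making $\widehat\gamma$ a quotient map, the opposite of where you located the difficulty. To complete your route you would need such a closedness statement (or at least closedness in measure of the unit ball) for your $\ker S$. For instance, you could adapt the quotient argument, which requires that $S$ send indicators of short curve pieces to differences $\delta_y-\delta_x$ with $d(x,y)$ small.
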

	
	Since $\cal F(\R)$ is linearly isometric to $L_1(\R)$, Theorem~\ref{thm:main} can be seen as a generalization of the Yosida--Hewitt decomposition, which implies that $L_1(\R)$ is $L$-embedded into its bidual. Our argument is mostly based on a tree parametrization of such metric spaces and fundamental results by Godefroy on subspaces of $L_1$ that are closed both in norm and with respect to the topology of convergence in measure \cite{Godefroy}.
	
	In the last part of the paper, we return to the general complementability problem. We add and adapt a new ingredient to the argument of C\'uth, Kalenda, and Kaplick\'y (\cite[Theorem 1.1]{CKK2019}) to show that if $E$ is a finite dimensional normed space and $C\ge 1$ is such that every subspace of $E$ is $C$-complemented in $E$, then $\cal F(E)$ is $C$-complemented in its bidual (Theorem \ref{t:C-complementation}). As an immediate corollary, we obtain that, for every two-dimensional normed space $E$, $\cal F(E)$ is $1$-complemented in its bidual (see Corollary \ref{CorollaryDim2}). This should be compared to \cite[Theorem A]{BGLPPS2026}, which asserts that $\F(E)$ is never $L$-embedded in its bidual when $\dim E \geq 2$. The new ingredient we use consists of considering the Effros--Borel structure on the set of subspaces of $E$, introduced by Bossard in \cite{Bossard2002}, and a Borel selection of projections of norm at most $C$ from $E$ to a given subspace of $E$ (Proposition \ref{p:Borel0}). \\
	
	\noindent\textbf{AI usage statement.}
	AI tools were neither used to develop the ideas, nor to generate or verify the proofs in this project.
	
	\section{Notation and preliminaries}
	\label{sec:prelims}
	
	\subsection{Banach spaces and $L$-embeddability}\label{subsec:L-embeddings}
	
	In this article, all Banach spaces will be over the field $\R$. 
	Given two Banach spaces $X$ and $Y$, we denote by $B(X,Y)$ the space of all bounded linear  operators from $X$ to $Y$, $B(X):=B(X,X)$, and we write $X^* = B(X, \R)$ the topological dual of $X$. We denote the closed unit ball of $X$ by $B_X$ and its unit sphere by $S_X$. 
	
	A \emph{projection} on a Banach space $X$ is a bounded linear operator  $P\colon X\to X$  with $P^2=P$. If moreover $P$ satisfies 
	\begin{equation}\label{Defi.Lprof.intro}\|x\|=\|Px\|+\|x-Px\|,\ \text{ for all }\ x\in X,
	\end{equation}
	$P$ is called an \emph{$L$-projection} and its image an \emph{$L$-summand}. A closed subspace $E$ of a Banach space $F$ is \emph{$L$-embedded in $F$} if there is an $L$-projection from $F$ onto $E$. 
	
	We will be interested in Banach spaces that are $L$-embedded in their biduals. As mentioned in the introduction, any $L_1$ space is $L$-embedded in its bidual. We also recall an important feature of Banach spaces that are $L$-embedded in their biduals: they are always weakly sequentially complete (\cite[Lemme 4]{Godefroy}).  
	
	\subsection{Metric spaces}
	
	In this very short subsection, we recall a few classical definitions for metric spaces. 
	
	\begin{definition} A metric space $(X,d)$ is called \emph{geodesic} if for every pair of points $x,y \in X$, there exists an isometry   $\gamma: [0,d(x,y)] \to X$, such that $\gamma(0)=x$ and $\gamma(d(x,y))=y$. Such a map  is called a \emph{geodesic path} from $x$ to $y$. 
	\end{definition}
	
	\begin{definition} A metric space $(X,d)$ is called \emph{proper} if every closed and bounded subset of $X$ is compact. As is typical in the literature, we abuse terminology and say that a map between metric spaces is \emph{proper} if the preimage of compact subsets is compact.
	\end{definition}
	
	We now turn to  the notion of unit speed for a Lipschitz curve.
	
	\begin{definition}
		Let us say that a Lipschitz map $\gamma: I \to (X,d)$ from a compact interval $I$ to a metric space $(X,d)$ has {\it unit speed} if for all $a \leq b \in I$, $\ell(\gamma\big|_{[a,b]}) = b-a$, where $\ell(g)$ denotes the length of a Lipschitz curve $g: [c,d] \to X$. 
	\end{definition}
	Note that unit speed Lipschitz maps are necessarily 1-Lipschitz.
	
	\subsection{Lipschitz-free spaces}\label{subsec:LF}
	By a \emph{pointed metric space} we mean a metric space $M$ containing a distinguished point, denoted 0 and called \emph{base point} or \emph{origin}. If $M$ and $N$ are pointed metric spaces then $\Lip_0(M,N)$ stands for the set of all Lipschitz functions $f :M \to N$ such that $f(0) = 0$. If $(M,d)$ is a metric space and $N=X$ is a real Banach space, then $\Lip_0(M,X)$ is a Banach space when equipped with the Lipschitz norm:
	$$ \forall f \in \Lip_0(M,X), \quad \|f\|_L = \sup_{x \neq y \in M} \frac{\|f(x)-f(y)\|_X}{d(x,y)}.$$
	In the case $X = \R$, we denote $\Lip_0(M):=\Lip_0(M,\R)$. Then, for $x\in M$, we let $\delta_x \in \Lip_0(M)^*$ be the evaluation functional defined by $\langle\delta_x , f \rangle  = f(x)$, for all $f \in \Lip_0(M)$. It is readily seen that $\delta_M: x \in M \mapsto \delta_x \in \Lip_0(M)^*$ is an isometry. When no confusion is possible, we will just write $\delta=\delta_M$. The \textit{Lipschitz-free space over $M$} (free space for brevity) is then defined as the closed linear span in $\Lip_0(M)^*$ of all such evaluation functionals:
	$$\F(M) := \overline{ \mbox{span}}^{\| \cdot  \|}\left \{ \delta_x \, : \, x \in M  \right \} \sbs \Lip_0(M)^*.$$
	
	Let us now recall the universal extension property of free spaces. Let  $X$ be a Banach space and $f \in \Lip_{0}(M,X)$. Then there exists a unique $\overline{f} \in B(\F(M),X)$ such that $f = \overline{f} \circ \delta_M$. Moreover $\| \overline{f} \|_{B(\F(M),X)} = \| f \|_L$. Therefore, $\Lip_0(M,X)$ is linearly isometric to $B(\F(M),X)$ and, in particular, $\Lip_0(M)$ is linearly isometric to  $\F(M)^*$. Then the weak$^*$ topology induced by $\F(M)$ on $\Lip_0(M)$ coincides on bounded subsets of $\Lip_0(M)$ with the topology of pointwise convergence. 
	
	Another application of the universal extension property is the following factorization result. Let $M,N$ be pointed metric spaces and let $f:M\to N$ be a Lipschitz map satisfying $f(0)=0$. Then there exists a unique $\widehat f\in B(\F(M),\F(N))$ such that $\widehat f \circ \delta_M=\delta_N \circ f$. Moreover $\| \widehat {f} \|_{B(\F(M),\F(N))} = \Lip(f)$. We shall use this result and the notation $\widehat f$ throughout this note. 
	
	For a classical reference on free spaces, we refer the reader to~\cite{Weaver2}.
	
	\subsection{One dimensional Hausdorff measure} \label{subsec:H_1}
	
	Let $(X,d)$ be a separable metric space. We recall the basic facts that we shall need on $\mathcal H^1$, the $1$-dimensional Hausdorff measure on $X$. We refer to the textbook \cite[Chapter 4]{Mattila_Book} for the proofs (the results in \cite{Mattila_Book} are often stated only for $X = \R^n$, but the proofs of the basic results we require clearly hold in any separable metric space). For a subset $A$ of $X$ and $\delta>0$,
	\[\cal H^1_\delta(A):=\inf\left\{\sum_{i=1}^\infty \diam(C_i): A\sbs \bigcup_{i=1}^\infty C_i,\ \diam(C_i)\le \delta\right\}\ \ \text{and}\ \ \cal H^1(A):=\lim_{\delta \to 0}\cal H^1_\delta(A).\]
	Then $\cal H^1$ is an outer measure on the subsets of $X$. A subset $E$ of $X$ is called \emph{$\cal H^1$-measurable} if for any subset $A$ of $X$, \[\cal H^1(A)=\cal H^1(A \cap E)+\cal H^1(A \setminus E).\] The class $\Sigma_{\cal H^1}(X)$ of $\cal H^1$-measurable subsets of $X$ is a $\sigma$-algebra containing the Borel $\sigma$-algebra $\cal B(X)$. Then $\cal H^1$ is a ($\sigma$-additive) measure on $\Sigma_{\cal H^1}(X)$, which is complete for $\cal H^1$. The outer measure $\cal H^1$ is \emph{Borel regular}, meaning that for every subset $A$ of $X$, there exists $B\in \cal B(X)$ such that $A\sbs B$ and $\cal H^1(A)=\cal H^1(B)$. Assume now that $\cal H^1(X)<\infty$. Then passing to complements and using the definition of $\Sigma_{\cal H^1}(X)$, we get that for every $A \in \Sigma_{\cal H^1}(X)$, there exists $B \in \cal B(X)$ such that $B\sbs A$ and $\cal H^1(B)=\cal H^1(A)$, and again since $\cal H^1(A)<\infty$, this implies that $\cal H^1(A \setminus B)=0$. Importantly, we also recall that the Lebesgue measure on $\R$ coincides with the $1$-dimensional Hausdorff measure on $\R$.
	
	\begin{definition}
		We say that a metric space $X$ has \emph{locally finite $\H^1$-measure} if $\H^1(K) < \infty$ for all compact subsets $K$ of $X$.   
	\end{definition}
	
	\subsection{Metric differentials and Kirchheim's theorems}
	
	We now recall the important notion of metric differentials from the theory of rectifiable subsets of metric spaces, as well as two fundamental theorems of Kirchheim.
	
	\begin{definition}
		Let $A\sbs\R$ and $(X,d)$ be a metric space. The \textit{metric differential} of a map $\gamma: A \to X$ at a limit point $x \in A$ is defined to be the nonnegative number
		$$M\! D(\gamma,x) := \lim_{A\ni y \to x} \dfrac{d(\gamma(x),\gamma(y))}{|x-y|},$$
		whenever this limit exists in $\R$. The set of all limit points $x \in A$ for which $M\! D(\gamma,x)$ exists is denoted $\mathscr{M\! D}(\gamma)$.
	\end{definition}
	
	Given a Borel measure $\mu$ on a metric space $X$ (i.e., a measure whose associated $\sigma$-algebra contains $\cal B(X)$) and a $\mu$-measurable subset $A \sbs X$, we say that a $\mu$-measurable subset $B \sbs X$ has {\it full $\mu$-measure in $A$} if $B \sbs A$ and $\mu(A\setminus B) = 0$. We let $\lambda$ denote the Lebesgue measure on $(\R,\cal B(\R))$ and still let $\H^1$ denote the 1-dimensional Hausdorff measure on a metric space $X$.
	
	\begin{theorem}[Kirchheim Differentiation; {\cite[Theorem 2]{Kirchheim}}]\label{eq:Kirchheimdiff} For any Borel set $A\sbs\R$, any metric space $X$, and any Lipschitz map $\gamma: A \to X$, the set $\mathscr{M\! D}(\gamma)$ has full $\lambda$-measure in $A$.
	\end{theorem}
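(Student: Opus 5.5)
The plan is to reduce to the classical Rademacher/Lebesgue differentiation theorem by means of an isometric embedding of $\gamma(A)$ into $\ell_\infty$. Lipschitz maps into $\ell_\infty$ are weak$^*$ differentiable almost everywhere, and we will then show that the norm of the weak$^*$ derivative realizes the metric differential.

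\emph{Step 1 (reductions).} Since $A$ is a countable union of bounded Borel sets, we may assume $A$ is bounded. By extending $\gamma$ uniquely to the closure (after embedding in a complete space), we may assume $A$ is compact. Then $\gamma(A)$ is compact, hence separable, so we can replace $X$ by a separable space. Fix a dense sequence $(x_n)$ in $\gamma(A)$ and use the Fr\'echet-type isometric embedding $\iota\colon \gamma(A)\to \ell_\infty$, $\iota(x)=(d(x,x_n)-d(x_0,x_n))_n$. Set $g=\iota\circ\gamma$, which is Lipschitz from $A$ into $\ell_\infty$. Each coordinate $g_n$ extends by linear interpolation on the gaps of $A$ to a Lipschitz function on an interval, so by Rademacher's theorem it is differentiable at $\lambda$-a.e.\ point of $A$. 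Discarding the countably many null sets and the non-density points of $A$, we obtain a full-measure set on which every $g_n$ is differentiable. Writing $g_n'(t)$ for the derivative, define $m(t)=\sup_n |g_n'(t)|$.

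\emph{Step 2 (the lower bound).} For each fixed $n$ we have
\[
\liminf_{y\to t}\frac{d(\gamma(t),\gamma(y))}{|t-y|}\ \ge\ \lim_{y \to t}\frac{|g_n(y)-g_n(t)|}{|y-t|}=|g_n'(t)|.
\]
Taking the supremum over $n$ gives $\liminf\ge m(t)$ everywhere on the good set.

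\emph{Step 3 (the upper bound, the main obstacle).} We need
\[
\limsup_{y\to t}\frac{d(\gamma(t),\gamma(y))}{|t-y|}\le m(t)\quad\text{for a.e. } t .
\]
The supremum over $n$ does not commute with the limit, so this is the hard part.
\begin{itemize}
\item Since $d(\gamma(s),\gamma(u))=\sup_n|g_n(s)-g_n(u)|$ by density, the plan is to bound $|g_n(s)-g_n(u)|\le\int_s^u |g_n'|\,d\lambda\le\int_s^u m\,d\lambda$. This requires working on the interpolated extension, where the derivative on gaps is controlled by the chord slopes; alternatively one can restrict to $A$ and absorb the gaps using density points.
\item The function $m$ is measurable, being a countable supremum, and bounded by $\Lip(\gamma)$. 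Hence $d(\gamma(t),\gamma(y))\le \left|\int_t^y m\right|$.
\item Finally, at Lebesgue points of $m$ we get $\limsup\le m(t)$.
\end{itemize}
The delicate point is the behaviour on the gaps of $A$. On a gap the interpolation derivative equals a chord slope, which is at most $d(\gamma(a),\gamma(b))/(b-a)$ at the gap endpoints rather than $m$. To handle this, define the extension of $\gamma$ itself into $\ell_\infty$ (linear interpolation in $\ell_\infty$ is legitimate) and run the whole argument for the extended map $\tilde g\colon[\min A,\max A]\to\ell_\infty$. On an interval the integral bound $\|\tilde g(s)-\tilde g(u)\|_\infty\le\int_s^u \sup_n|\tilde g_n'|$ is valid coordinatewise.

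\emph{Step 4 (conclusion).} By Steps 2 and 3 the limit exists and equals $m(t)$ at a.e.\ $t$ of the interval. The quantity $d(\gamma(t),\gamma(y))/|t-y|$ for $y\in A$ is a restriction of the corresponding quotient for $\tilde g$. Therefore the limit over $A\ni y\to t$ exists at a.e.\ limit point $t\in A$. This shows that $\mathscr{M\! D}(\gamma)$ has full $\lambda$-measure in $A$.
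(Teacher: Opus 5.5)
Your proof is correct. The paper does not prove this statement; it only cites Kirchheim. Your argument is essentially the one-dimensional case of Kirchheim's own proof:
\begin{itemize}
\item reduce to a separable image and embed it isometrically into $\ell_\infty$;
\item extend the map to an interval;
\item identify the metric differential with the norm of the weak$^*$ (coordinatewise) derivative, $M\! D(\gamma,t)=m(t)=\sup_n|\tilde g_n'(t)|$.
\end{itemize}
This identification comes from two bounds. Each coordinate gives the lower bound. The upper bound is $\|\tilde g(u)-\tilde g(s)\|_\infty\le\int_s^u m\,d\lambda$, evaluated at Lebesgue points of $m$. In dimension one you need nothing beyond Lebesgue's theorem for real Lipschitz functions and the Lebesgue differentiation theorem. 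Kirchheim's $\R^n$ version must additionally control all directions simultaneously and establish the seminorm structure, which the paper never uses.

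Some tidy-ups:
\begin{itemize}
\item The ``delicate point'' in Step 3 is not actually delicate. Once $m$ is defined as $\sup_n|\tilde g_n'|$ for the interpolated coordinates on the whole interval, the bound $|\tilde g_n(u)-\tilde g_n(s)|\le\int_s^u|\tilde g_n'|\,d\lambda\le\int_s^u m\,d\lambda$ holds regardless of the gaps. So the first version of Step 3, working on $A$, can be dropped, and so can the removal of non-density points.
\item Step 4 asserts the limit exists along all $y$ in the interval. That needs the lower bound of Step 2 for $\tilde g$ against all such $y$, which has the same one-line proof. Alternatively, note that Step 2 along $y\in A$ together with Step 3 already gives the limit along $y\in A$ directly.
\item In Step 4 you implicitly use two facts, which should be stated. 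First, $\|\tilde g(y)-\tilde g(t)\|_\infty=d(\gamma(y),\gamma(t))$ for $t,y\in A$, because $\iota$ is an isometry. Second, all but countably many points of $A$ are limit points of $A$; this is what turns ``a.e.\ limit point'' into ``full $\lambda$-measure in $A$''.
\item You could skip the closure/compactness reduction altogether. Embed the separable set $\gamma(A)$ into $\ell_\infty$ and McShane-extend each coordinate of $\iota\circ\gamma$ to $\R$; Lipschitz maps into $\ell_\infty$ extend coordinatewise with the same constant.
\end{itemize}
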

	
	\begin{theorem}[Area Formula; {\cite[Corollary~8]{Kirchheim}}]\label{thm:areaformula} Let $A\sbs\R$ be a Borel set, $X$ a metric space, and $\gamma\colon A \to X$ a Lipschitz map. For each $y \in X$, let $|\gamma^{-1}(y)| \in \N$ denote the cardinality of the preimage of $\{y\}$ under $\gamma$. Then
		$$\int_A M\! D(\gamma,x)\, d\lambda(x) = \int_{\gamma(A)} |\gamma^{-1}(y)|\, d\H^1(y).$$
	\end{theorem}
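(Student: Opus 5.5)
The statement above is Kirchheim's area formula, which the paper takes from the literature; if I had to prove it, I would follow the classical Federer-type scheme, with Theorem~\ref{eq:Kirchheimdiff} supplying the metric differential.

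\emph{Reductions.} By inner regularity of $\lambda$, write $A=N\cup\bigcup_j K_j$ with $K_j$ compact and $\lambda(N)=0$. Since $\gamma$ is Lipschitz, $\H^1(\gamma(N))\le \Lip(\gamma)\lambda(N)=0$. Each $\gamma(K_j)$ is compact, so all images of Borel pieces of the $K_j$ are analytic, hence $\H^1$-measurable. We may also assume $X$ is separable, since $\gamma(A)$ is.

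\emph{Multiplicity.} Let $\mathcal Q_k$ be the partition of $\R$ into dyadic intervals of length $2^{-k}$. Then
\[
|\gamma^{-1}(y)|=\lim_{k\to\infty}\sum_{Q\in\mathcal Q_k}\one_{\gamma(A\cap Q)}(y),
\]
and the sum increases in $k$. This gives measurability of $y\mapsto|\gamma^{-1}(y)|$. It also reduces the formula, via monotone convergence, to proving
\[
\H^1(\gamma(B))=\int_B M\! D(\gamma,x)\,d\lambda
\]
for Borel $B$ on which $\gamma$ is injective. In general one first shows that both sides are countably additive in $B$, then refines the partition.

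\emph{Zero set.} Let $Z=\{x\in \mathscr{M\! D}(\gamma): M\! D(\gamma,x)=0\}$ and fix $\eps>0$. Every $x\in Z$ is the centre of arbitrarily small intervals $I$ with $\diam\gamma(A\cap I)\le\eps|I|$. Take an open set $U\supseteq Z$ with $\lambda(U)\le\lambda(Z)+1$. A Vitali covering (in $\R$ this is elementary) by such intervals inside $U$ then gives $\H^1(\gamma(Z))\le C\eps(\lambda(Z)+1)$. Hence $\H^1(\gamma(Z))=0$, and the same argument applied on $Z\cap[-n,n]$ removes the finiteness issue.

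\emph{Positive set.} Fix $t>1$. Decompose $\{M\! D(\gamma,\cdot)>0\}\cap\mathscr{M\! D}(\gamma)$ into countably many Borel pieces $E_i$, each of diameter less than some $r_i$, such that for some $c_i>0$:
\begin{itemize}
\item $t^{-1}c_i\le M\! D(\gamma,x)\le t c_i$ on $E_i$;
\item $t^{-1}c_i|x-y|\le d(\gamma(x),\gamma(y))\le tc_i|x-y|$ for $x,y\in E_i$.
\end{itemize}
The pieces are indexed by rational $c_i$ and $r_i$, by points of a countable dense set, and by a cutoff radius below which the difference quotients in the definition of $M\! D$ are $t$-close to the limit. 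The second condition would need uniformity in both points. Since the limit is only pointwise, I expect to enforce it by further restricting to sets where the convergence of the difference quotient is uniform (Egorov-style) and to small diameters. This is the main obstacle: obtaining a genuinely two-point bi-Lipschitz estimate from the one-point limit $M\! D(\gamma,x)$. It requires care because a one-dimensional domain helps only with the "centre" point.

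Once the pieces are fixed, $\gamma|_{E_i}$ is injective and $tc_i$-bi-Lipschitz onto its image up to the factor $t^2$. Since $\H^1=\lambda$ on $\R$ and $\H^1$ scales by the Lipschitz constants,
\[
t^{-2}c_i\lambda(E_i)\le\H^1(\gamma(E_i))\le t^{2}c_i\lambda(E_i),
\]
and hence $\H^1(\gamma(E_i))$ is within a factor $t^{3}$ of $\int_{E_i}M\! D(\gamma,x)\,d\lambda$. Summing over the pieces, using the multiplicity reduction and the null sets above, and letting $t\downarrow1$ yields the area formula.
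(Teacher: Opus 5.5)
Your argument is correct. It is essentially the standard Federer-type proof that Kirchheim carries out for Lipschitz maps on subsets of $\R^n$, specialized to a one-dimensional domain: countably many almost bi-Lipschitz pieces on $\{M\! D(\gamma,\cdot)>0\}$, a Vitali argument giving $\H^1(\gamma(Z))=0$, and a multiplicity count. The paper itself gives no proof and simply cites Kirchheim's Corollary~8.

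The step you single out as the main obstacle is in fact immediate, and no Egorov argument or two-point uniformity is needed. Let $E_i$ have diameter less than $r_i$ and consist of points $x$ satisfying $t^{-1/2}c_i\le M\! D(\gamma,x)\le t^{1/2}c_i$ and $t^{-1/2}M\! D(\gamma,x)\le d(\gamma(x),\gamma(y))/|x-y|\le t^{1/2}M\! D(\gamma,x)$ for all $y\in A$ with $0<|y-x|<r_i$. For any two points $x\neq y$ of $E_i$, the one-point condition at $x$ (with $y\in E_i\subseteq A$ as the moving point) already gives $t^{-1}c_i|x-y|\le d(\gamma(x),\gamma(y))\le tc_i|x-y|$. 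The sets of this kind, indexed by rational $c_i$, rational $r_i$ and intervals of length less than $r_i$, cover $\{M\! D(\gamma,\cdot)>0\}$.
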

	
	Note that in the above statement, the notation $\int_{\gamma(A)} |\gamma^{-1}(y)|\, d\H^1(y)$ makes sense as $\gamma(A)$ is $\H^1$-measurable. Indeed, $A$ is the union of a $\lambda$-null set $N$ and a countable union of compact sets. Since $\gamma$ is continuous, it follows that $\gamma(A)$ is the union of $\gamma(N)$ and a countable union of compact sets. But $\gamma$ is Lipschitz and $\lambda$ is the $\H^1$-measure on $\R$, so it is easy to see that $\gamma(N)$  is $\Sigma_{\H^1}(X)$-negligible, so $\gamma(N) \in \Sigma_{\H^1}(X)$ and $\H^1(\gamma(N))=0$, because $\Sigma_{\H^1}(X)$ is complete for $\H^1$.
	
	\subsection{$\R$-trees} \label{ss:trees}
	An {\it $\R$-tree} is a metric space $(T,d_T)$ such that for any pair of points, there exists a unique arc $[x,y] \sbs T$ with endpoints $x$ and $y$, and this arc is the image of a geodesic path $\gamma_{xy}\colon [0,d_T(x,y)] \to T$. We will abuse terminology and refer to both $\gamma_{xy}$ and $[x,y]$ as geodesics. The \emph{interior} of the geodesic $\gamma_{xy}$   is  the image of $(0,d_T(x,y))$ by $\gamma_{xy}$.
	
	Let $T$ be an $\R$-tree. A subset $A \sbs T$ is {\it measurable} if $\gamma_{xy}^{-1}(A) \sbs \R$ is $\lambda$-measurable for all $x,y \in T$. The {\it length measure} is defined on measurable subsets by 
	\[\lambda_T(A)=\sup\Big\{\sum_i \lambda(\gamma_{x_i y_i}^{-1}(A)): \{[x_i,y_i]\}_i \text{ finite, pairwise disjoint sets of geodesics}\Big\}.\]
	It is clear that if $A \sbs T$ and $\lambda_T(A)<\infty$, then $A$ is totally bounded.
	
	The {\it leaf set} of $T$ is the subset $L(T)$ consisting of points $x\in T$ which are not in the interior of any geodesic. It is clear from the definition that $\lambda_T(L(T)) = 0$. If $\overline{T}$ denotes the completion of $T$, then $\overline{T}$ is also an $\R$-tree and $\overline{T}\setminus T \sbs L(\overline{T})$. Hence,
	\begin{equation} \label{eq:lambda=0}
		\lambda_{\overline{T}}\left(\overline{T}\setminus T\right) = 0.
	\end{equation}
	
	We introduce an elementary approximation-by-step-functions proposition that will be used later in the proof of our main result. Towards that end, we require the following lemma.
	
	\begin{lemma} \label{lem:1}
		Let $T$ be an $\R$-tree. For all measurable $A \sbs T$ and $x,y\in T$, we have $\lambda(\gamma_{xy}^{-1}(A))=\lambda_T(A \cap [x,y])$.
	\end{lemma}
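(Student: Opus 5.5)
We prove the two inequalities $\lambda(\gamma_{xy}^{-1}(A))\le\lambda_T(A\cap[x,y])$ and $\lambda_T(A\cap[x,y])\le\lambda(\gamma_{xy}^{-1}(A))$ separately. First note that $A\cap[x,y]$ is measurable. Indeed, for any $u,v\in T$, the intersection $[u,v]\cap[x,y]$ is a (possibly degenerate or empty) geodesic subarc $[p,q]$, because in an $\R$-tree the intersection of two arcs is an arc. Hence $\gamma_{uv}^{-1}(A\cap[x,y])=\gamma_{uv}^{-1}(A)\cap\gamma_{uv}^{-1}([x,y])$, where the second set is a closed interval. So it is measurable.

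For the first inequality, take the single geodesic $[x,y]$ as the finite disjoint family in the definition of $\lambda_T$. This gives $\lambda_T(A\cap[x,y])\ge\lambda(\gamma_{xy}^{-1}(A\cap[x,y]))=\lambda(\gamma_{xy}^{-1}(A))$, since $\gamma_{xy}$ takes values in $[x,y]$.

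For the reverse inequality, let $\{[x_i,y_i]\}_{i=1}^n$ be pairwise disjoint geodesics. For each $i$, the set $[x_i,y_i]\cap[x,y]$ is either empty or a subarc $[p_i,q_i]$ of $[x,y]$, and these subarcs are pairwise disjoint.

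The key computation is to relate parametrizations. Write $I_i=\gamma_{x_iy_i}^{-1}([p_i,q_i])$, an interval of length $d(p_i,q_i)$, and $J_i=\gamma_{xy}^{-1}([p_i,q_i])$, also of length $d(p_i,q_i)$. On $I_i$, both $\gamma_{x_iy_i}$ and $\gamma_{xy}$ parametrize the unique arc $[p_i,q_i]$ isometrically. Therefore $\gamma_{xy}^{-1}\circ\gamma_{x_iy_i}\colon I_i\to J_i$ is an isometry of intervals, that is, an affine map $t\mapsto \pm t+c$, and it preserves Lebesgue measure. Consequently
\[\lambda\big(\gamma_{x_iy_i}^{-1}(A\cap[x,y])\big)=\lambda\big(\gamma_{x_iy_i}^{-1}(A)\cap I_i\big)=\lambda\big(\gamma_{xy}^{-1}(A)\cap J_i\big).\]
The intervals $J_i$ are disjoint except possibly at endpoints, and they are in fact disjoint because the arcs $[p_i,q_i]$ are disjoint. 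Summing over $i$ therefore gives at most $\lambda(\gamma_{xy}^{-1}(A))$. Taking the supremum over all such families yields $\lambda_T(A\cap[x,y])\le\lambda(\gamma_{xy}^{-1}(A))$.

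The only step needing care is the tree-geometric fact that the intersection of two geodesics is a geodesic subarc, on which the two parametrizations differ by an isometry of $\R$. This follows from uniqueness of arcs: the intersection is connected, since a gap would produce two distinct arcs between two points of $[x,y]$, and it is compact. A connected compact subset of an arc is a subarc. Uniqueness of arcs then forces both parametrizations to trace this same subarc isometrically.
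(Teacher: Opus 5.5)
Your proposal is correct and follows essentially the same argument as the paper. The easy inequality comes from taking the single geodesic $[x,y]$. The reverse inequality comes from intersecting the disjoint geodesics $[x_i,y_i]$ with $[x,y]$ and using uniqueness of geodesics to move the measures onto disjoint subsets of $\gamma_{xy}^{-1}(A)$. You additionally check that $A\cap[x,y]$ is measurable and spell out the isometric reparametrization $\gamma_{xy}^{-1}\circ\gamma_{x_iy_i}$, which the paper leaves implicit.
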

	
	\begin{proof}
		Let $A \sbs T$ be measurable and $x,y\in T$.
		It is clear that \[\lambda(\gamma_{xy}^{-1}(A))=\lambda(\gamma_{xy}^{-1}(A \cap [x,y])) \le \lambda_T(A \cap [x,y]).\]
		Let $\{[x_i,y_i]\}_i$ be a finite set of pairwise disjoint geodesics. Note that for all $i$, uniqueness of geodesics implies that $[x,y]\cap[x_i,y_i]$ is a geodesic included in $[x,y]$ or is empty. Again using uniqueness of geodesics, we get that
		\[\sum_i\lambda(\gamma_{x_i y_i}^{-1}(A \cap [x,y]\cap[x_i,y_i]))=\sum_i\lambda(\gamma_{xy}^{-1}(A \cap [x,y]\cap[x_i,y_i])) \le \lambda(\gamma_{xy}^{-1}(A \cap [x,y])).\]
		Passing to the supremum over $\{[x_i,y_i]\}_i$ gives $\lambda_T(A \cap [x,y])\le \lambda(\gamma_{xy}^{-1}(A))$.
	\end{proof}
	
	\begin{proposition}[Step Function Approximation] \label{prop:stepapproximation}
		Let $T$ be an $\R$-tree and $A\sbs T$ measurable with $\lambda_T(A)<\infty$, let $f \in L_1(T,\lambda_T)$, and let $\eps>0$. Then
		\begin{itemize}
			\item there exist finitely many pairwise disjoint geodesics $\{[w_i,x_i]\}_i \sbs T$ such that $\lambda_T(A \Delta \cup_i [w_i,x_i]) < \eps$, and
			\item there exist finitely many scalars $\{c_i\}_i \sbs \R$ and pairwise disjoint geodesics $\{[y_i,z_i]\}_i \sbs T$ such that $\|f-\sum_i c_i \one_{[y_i,z_i]}\|_{L_1(T,\lambda_T)} < \eps$.
		\end{itemize}
	\end{proposition}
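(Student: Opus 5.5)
The plan is to prove the first bullet by exhausting $A$ from inside with finitely many geodesics, and then to deduce the second bullet from the first by approximating $f$ with simple functions.

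\textbf{First bullet: reduction to finitely many geodesics.} The definition of $\lambda_T$ as a supremum gives finitely many pairwise disjoint geodesics $[x_j,y_j]$ with
\[\sum_j \lambda(\gamma_{x_jy_j}^{-1}(A)) > \lambda_T(A)-\eps/2.\]
By Lemma~\ref{lem:1}, the left side equals $\sum_j\lambda_T(A\cap[x_j,y_j])$. Additivity of $\lambda_T$ on disjoint measurable sets then gives $\lambda_T(A\setminus \bigcup_j[x_j,y_j])<\eps/2$. Additivity is routine from the supremum definition: disjoint families of geodesics can be refined so that each geodesic lies in one set, using that intersections of geodesics are geodesics. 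This reduces the problem to approximating each $A\cap[x_j,y_j]$ inside $[x_j,y_j]$ with error $\eps/(2N)$, where $N$ is the number of geodesics.

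\textbf{First bullet: approximation on a single geodesic.} The set $\gamma_{x_jy_j}^{-1}(A)\sbs[0,d_T(x_j,y_j)]$ is Lebesgue measurable. By regularity of Lebesgue measure it is approximated in symmetric difference by a finite union of pairwise disjoint closed intervals $I_{j,k}$. Their images $\gamma_{x_jy_j}(I_{j,k})$ are pairwise disjoint geodesics contained in $[x_j,y_j]$. Lemma~\ref{lem:1} transports the error: the set $(A\cap[x_j,y_j])\,\Delta\,\bigcup_k\gamma_{x_jy_j}(I_{j,k})$ lies in $[x_j,y_j]$ and has $\lambda_T$-measure equal to the Lebesgue measure of the preimage symmetric difference.

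\textbf{First bullet: the main obstacle.} The only genuine subtlety is disjointness across different $j$. The $[x_j,y_j]$ are disjoint as sets, so the resulting geodesics are disjoint. Summing the errors gives $\lambda_T(A\Delta\bigcup[w_i,x_i])<\eps$, using the additivity of $\lambda_T$ noted above.

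\textbf{Second bullet.} Approximate $f$ in $L_1(T,\lambda_T)$ by a simple function $\sum_{m} a_m\one_{A_m}$ with the $A_m$ pairwise disjoint, measurable and of finite measure. Apply the first bullet to each $A_m$ to obtain a finite disjoint union $G_m$ of geodesics with $\lambda_T(A_m\Delta G_m)$ small. The unions $G_m$ for different $m$ may overlap. To fix this, note that the finitely many geodesics involved generate a finite subtree, namely the union of geodesics between all their endpoints. This subtree decomposes into finitely many geodesic segments that overlap only at endpoints, and each $G_m$ is, up to finitely many points, a union of these segments. Removing endpoint overlaps changes nothing in $\lambda_T$, since $\lambda_T$ of a point is $0$. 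Alternatively, one can replace closed geodesics by slightly shortened ones, at small cost.

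The resulting function is $\sum_i c_i\one_{[y_i,z_i]}$, where the segments $[y_i,z_i]$ are pairwise disjoint and $c_i$ is the sum of the $a_m$ over those $m$ whose $G_m$ contains the $i$-th segment. Its $L_1$ distance from $\sum_m a_m\one_{A_m}$ is at most $\sum_m|a_m|\lambda_T(A_m\Delta G_m)$, which can be made arbitrarily small. The triangle inequality then gives $\|f-\sum_i c_i \one_{[y_i,z_i]}\|_{L_1(T,\lambda_T)}<\eps$.
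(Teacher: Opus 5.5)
Your proposal is correct and follows the paper's proof. You use the supremum definition of $\lambda_T$ together with Lemma~\ref{lem:1} to reduce to finitely many pairwise disjoint geodesics, approximate on each one by finite unions of intervals via Lebesgue measure, and obtain the second item from simple functions. Your disjointification of the overlapping sets $G_m$ through the finite subtree, with slight shortening of the closed segments, supplies a detail that the paper leaves implicit.
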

	
	\begin{proof}
		It suffices to prove the first item, as the second follows by approximating $f$ by a simple function. It follows from Lemma~\ref{lem:1} and the definition of $\lambda_T$ that for any $\eps>0$, there exist finitely many pairwise disjoint geodesics $\{[x_i,y_i]\}_i$ such that \[\left\|\one_A-\sum_i\one_{A \cap [x_i,y_i]}\right\|_{L_1(T,\lambda_T)}< \eps.\] Thus, it suffices to find the desired approximation for $\one_{A \cap [x,y]}$, for each $x,y \in T$. Such an approximation is possible by Lemma~\ref{lem:1} and standard results about approximations of functions in $L_1(\R,\lambda)$ by step functions. 
	\end{proof}
	
	In \cite[Corollary~3.3]{Godard2010PAMS}, Godard proved that for any $\R$-tree $T$, the free space $\F(T)$ is isometrically identified with $L_1(T,\lambda_T)$ via $\delta_y-\delta_x \mapsto \one_{[x,y]}$. We will use this well-known identification liberally in the sequel.
	
	\subsection{The Effros--Borel structure on closed subsets of a Polish space}
	
	For the proofs of both main results of this paper, we shall need to give a meaning to the sentence ``a map taking values in the set of closed subsets of a given Polish space is Borel". This will be done by using the so-called Effros--Borel structure. Let us recall the definitions. 
	
	A \emph{Polish space} (resp.\ \emph{topology}) is a separable completely metrizable topological space (resp.\ topology). A set $X$ equipped with a $\sigma$-algebra is called a \emph{standard Borel space} if the $\sigma$-algebra is generated by a Polish topology on $X$. A subset of such a  standard Borel space $X$ is called \emph{Borel} if it is an element of the corresponding $\sigma$-algebra.
	
	Let $X$ be a Polish space. We denote $Cl(X)$ the set of all closed subsets of $X$. Then, $Cl(X)$ can be equipped with its \emph{Effros--Borel structure}, defined as the $\sigma$-algebra generated by the sets $\{F \in Cl(X): F\cap U\neq \emptyset\}$,  where $U$ varies over the open subsets of $X$. Equipped with this $\sigma$-algebra, ${Cl}(X)$ is a standard Borel space.\footnote{In the literature, ${Cl}(X)$ is often denoted by $\cal F(X)$. This notation is however already reserved for the Lipschitz-free space in this paper.}
	
	We include, for future reference the following classical and easy to prove result.
	
	\begin{proposition}\emph{(cf.\ \cite[Exercises 4.21 and 12.7]{KechrisBook1995}).} Let $K$ be a compact metric space. Then the Effros--Borel structure is also generated by the sets $\{F \in Cl(K)\colon F \sbs V\}$, where $V$ runs over the open subsets of $K$. It also coincides with the Borel $\sigma$-algebra associated with the Hausdorff metric on $Cl(K)$.\label{p:EffrosBorel_on compact} 
	\end{proposition}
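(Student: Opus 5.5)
We need to prove two things: the sets $\{F\in Cl(K): F\sbs V\}$ generate the Effros--Borel $\sigma$-algebra $\mathcal E$, and $\mathcal E$ coincides with the Borel $\sigma$-algebra of the Hausdorff metric $d_H$ on $Cl(K)$. Here the empty set is treated as an isolated point. Let $\mathcal E'$ be the $\sigma$-algebra generated by the sets $\{F\sbs V\}$, and $\mathcal B_H$ the Borel $\sigma$-algebra of $d_H$. The plan is to prove $\mathcal E\sbs\mathcal E'\sbs\mathcal B_H\sbs\mathcal E$, using throughout a countable base $\{U_n\}$ of $K$ that is closed under finite unions.

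For $\mathcal E\sbs\mathcal E'$, write an open $U$ as an increasing union of closed sets $C_k=\{x: d(x,K\setminus U)\ge 1/k\}$. Then $F\cap U\neq\emptyset$ if and only if $F\cap C_k\ne\emptyset$ for some $k$. This in turn holds if and only if $F\not\sbs K\setminus C_k$ for some $k$, which is a countable union of complements of generators of $\mathcal E'$. For $\mathcal E'\sbs\mathcal B_H$, I would show that $\{F: F\sbs V\}$ is $d_H$-open, using compactness. If $F\sbs V$ is nonempty, then $F$ is compact, so $\eta:=d(F,K\setminus V)>0$. Any $G$ with $d_H(F,G)<\eta$ lies in the $\eta$-neighbourhood of $F$, hence in $V$. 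The set $\{\emptyset\}$ is open as well. Similarly, $\{F: F\cap U\ne\emptyset\}$ is $d_H$-open.

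For $\mathcal B_H\sbs\mathcal E$, I would use that $(Cl(K),d_H)$ is compact metric, hence separable. It then suffices to show that every open $d_H$-ball $B(F_0,r)$ lies in $\mathcal E$, since every open set is then a countable union of such balls. Take $F_0$ nonempty. Then $d_H(F,F_0)<r$ holds if and only if there is a rational $s<r$ with two properties: $F\sbs N_s(F_0)$, where $N_s$ is the open $s$-neighbourhood, and $F_0\sbs N_s(F)$. The first property already lies in $\mathcal E'\sbs$, and I will return to why $\mathcal E'\sbs\mathcal E$ (the reverse inclusion) holds. For the second, cover the compact set $F_0$ by finitely many open balls $B(x_i,\epsilon)$ with centres in $F_0$. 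The condition that $F$ meets each $B(x_i,s-\epsilon)$ then yields $F_0\sbs N_{s}(F)$ up to small adjustments of rational parameters. The condition $F_0\sbs N_s(F)$ is thus sandwiched between countable Boolean combinations of sets $\{F\cap U\ne\emptyset\}$, and taking unions over rationals gives an exact description.

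The remaining point, and the main technical step, is $\mathcal E'\sbs\mathcal E$. Here I would show that $\{F\sbs V\}$ is the complement of $\{F\cap(K\setminus V)\ne\emptyset\}$. By compactness, $K\setminus V$ is a decreasing intersection of the open sets $W_k=N_{1/k}(K\setminus V)$. Then $F\cap(K\setminus V)\neq\emptyset$ if and only if $F\cap \overline{W_k}\ne\emptyset$ for all $k$. The forward direction is immediate. For the converse, if $F$ meets each $\overline{W_k}$, then compactness of $F$ and the nestedness of these sets give a point of $F\cap\bigcap_k\overline{W_k}=F\cap(K\setminus V)$. Moreover, $F\cap\overline{W_k}\ne\emptyset$ holds if and only if $F\cap W_{k-1}\ne\emptyset$ for all larger indices, after a careful choice of radii (for instance $\overline{W_{k+1}}\sbs W_k\sbs\overline{W_k}$). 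Hence the set equals $\bigcap_k\{F\cap W_k\ne\emptyset\}\in\mathcal E$. The delicate parts are the bookkeeping of these nested neighbourhoods and the sandwiching of rational radii in the ball description. Both are routine once compactness is used to turn the conditions into strict inequalities.
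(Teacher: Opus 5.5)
Your proof is correct. The paper gives no proof of its own and only cites Kechris. The route behind that citation is topological. One first shows that $d_H$ induces the Vietoris topology, which is generated by both families $\{F\sbs U\}$ and $\{F\cap U\neq\emptyset\}$. Then, for compact $K$, one checks that the Vietoris subbasic sets are Effros--Borel; a countable base of $K$ closed under finite unions provides a countable subbasis.

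Your arguments for $\mathcal E=\mathcal E'$ are the standard ones, and your proof of $\mathcal E'\sbs\mathcal E$ is the key step in that route as well. Where you differ is in relating $\mathcal E$ to $\mathcal B_H$: you never name the Vietoris topology. Your $d_H$-openness of the generators is one half of ``Vietoris $=$ Hausdorff''. Your description of $d_H$-balls is the other half, carried out only at the Borel level. Separability of $(Cl(K),d_H)$ then replaces second countability of the Vietoris topology. This is self-contained and proves exactly what is stated, but it does not record the stronger topological identity.

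Two consequences follow. The countable base closed under finite unions that you announce at the start is never used; it belongs to the Vietoris route. Also, compactness of the hyperspace is more than you need: finite subsets of a countable dense subset of $K$ are already $d_H$-dense.

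Some points of presentation should be tightened.
\begin{itemize}
\item Your proof of $\mathcal B_H\sbs\mathcal E$ uses $\{F\sbs N_s(F_0)\}\in\mathcal E$. So $\mathcal E'\sbs\mathcal E$ has to be proved independently and beforehand, not read off from the announced chain.
\item In the proof of $\mathcal E'\sbs\mathcal E$, the sentence about $F\cap W_{k-1}$ is garbled. What you need is $\bigcap_k\{F\cap\overline{W_k}\neq\emptyset\}=\bigcap_k\{F\cap W_k\neq\emptyset\}$. This follows from $W_k\sbs\overline{W_k}$ and $\overline{W_{k+1}}\sbs W_k$. The latter inclusion is automatic for $W_k=N_{1/k}(K\setminus V)$, so no choice of radii is involved. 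Alternatively, points $x_k\in F\cap W_k$ have a cluster point in $F\cap(K\setminus V)$.
\item State the ``sandwich'' as the exact identity
\[\{F\colon F_0\sbs N_s(F)\}=\bigcup_{\eps\in\mathbb{Q}\cap(0,s)}\ \bigcap_i\{F\colon F\cap B(x_i^\eps,s-\eps)\neq\emptyset\},\]
where $x_i^\eps\in F_0$ are finitely many points with $F_0\sbs\bigcup_i B(x_i^\eps,\eps)$. The inclusion $\sbs$ is where compactness of $F_0$ enters, via $\max_{y\in F_0}d(y,F)<s$.
\end{itemize}
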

	
	This construction can be adapted for the set of closed linear subspaces of a given separable Banach space, as it was initiated in the work of  Bossard in \cite{Bossard2002}. So, let $E$ be a separable Banach space.  We equip ${Cl}(E)$ with its corresponding Effros--Borel structure. We denote 
	\[\mathcal S(E)=\{F\in {Cl}(E) : F\ \text{is a linear subspace of}\ E\},
	\]
	considered as a subset of  ${Cl}(E)$. Then $\mathcal S(E)$ is a Borel subset of ${Cl}(E)$ (\cite[Proposition 2.2]{Bossard2002}) and therefore a standard Borel space. In the sequel, $\mathcal S(E)$ will always be equipped with this Borel structure.

	\section{\texorpdfstring{Metric spaces with locally finite $\H^1$-measure}{Metric spaces with locally finite -measure}}
	
	\subsection{Parametrization of compact, connected spaces with finite $\H^1$-measure}
	
	A version of the following theorem was first proved by Ważewski \cite{Waz}. It gives a characterization of compact, connected metric spaces with finite $\H^1$-measure in terms of a canonical parametrization. This parametrization will be essential in our study of Lipschitz-free spaces over such metric spaces.
	
	\begin{theorem}[{\cite[Theorem~4.4]{Golab}}] \label{thm:Waz}
		Let $X$ be a compact, connected metric space with $\H^1(X)<\infty$. Then there exists a map $\gamma\colon [0,2\H^1(X)] \to X$ with the following properties:
		\begin{enumerate}
			\item $\gamma$ is surjective. 
			\item $\gamma$ is 1-Lipschitz with unit speed.
			\item For $\H^1$-a.e. $x\in X$, we have $|\gamma^{-1}(\{x\})| = 2$.
		\end{enumerate}
	\end{theorem}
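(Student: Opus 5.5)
The plan is to build $\gamma$ as a closed "Euler tour" of $X$, obtained as a limit of tours of finite graphs (finite unions of geodesic arcs) that exhaust $X$ up to small $\H^1$-measure. We first reduce to this finite setting. A compact connected metric space with $\H^1(X)<\infty$ is arcwise connected; in fact any two points are joined by an arc of length at most $\H^1(X)$. Hence every point can be reached by such arcs.

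Fix a countable dense set $\{x_n\}$ in $X$. We build an increasing sequence of compact connected sets $G_1\sbs G_2\sbs\cdots$. Here $G_1=\{x_1\}$, and $G_{n+1}$ is $G_n$ together with an arc from $x_{n+1}$ to a point of $G_n$, meeting $G_n$ only at its endpoint (take a first hitting point). Each $G_n$ is then a finite tree of rectifiable arcs. Its length is $\ell(G_n)=\H^1(G_n)\le \H^1(X)$, since the added arcs overlap $G_n$ in one point only. We parametrize each arc by arclength. For a finite tree, the depth-first traversal gives a closed curve $\gamma_n\colon[0,2\H^1(G_n)]\to G_n$ that is unit speed, surjective onto $G_n$, and traverses every arc exactly twice. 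Thus $|\gamma_n^{-1}(x)|=2$ for all but finitely many $x\in G_n$.

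Next we arrange the tours to be compatible. Inserting a new arc into the depth-first tour means splicing a detour (go out along the new arc and back) at one time instant. Therefore $\gamma_{n+1}$ is obtained from $\gamma_n$ by cutting $[0,2\H^1(G_n)]$ at one point and inserting an interval of length twice the new arc's length. We reparametrize all tours on a common interval $[0,2\H^1(X)]$ by rescaling: define $\tilde\gamma_n(t)=\gamma_n(t\,\H^1(G_n)/\H^1(X))$. These maps are $1$-Lipschitz. By Arzelà–Ascoli a subsequence converges uniformly to a $1$-Lipschitz $\gamma$. Its image is compact and contains every $x_n$, so it is all of $X$, which gives (1).

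The main obstacle is showing that $\H^1(G_n)\to\H^1(X)$ and deducing (2) and (3). If $L=\lim\H^1(G_n)<\H^1(X)$, then rescaling makes $\gamma$ slower than unit speed. I would instead avoid rescaling: the splicing is monotone, so the arclength positions converge, and one can pass to $[0,2L]$ directly. I would then argue $L=\H^1(X)$ via the area formula (Theorem~\ref{thm:areaformula}) applied to the limit $\gamma$. That formula gives $2L\ge\int M\!D(\gamma,t)\,dt=\int_X|\gamma^{-1}(y)|\,d\H^1(y)\ge\H^1(X)$ by surjectivity, which only yields $2L\ge \H^1(X)$. To close the gap I would show $\H^1(X\setminus\bigcup_n G_n)=0$. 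Suppose not. The residual set $R=X\setminus\bigcup G_n$ is covered by the limit, and on $\gamma^{-1}(R)$ the multiplicity is at least $2$ almost everywhere, by a tree-like "two-sidedness" argument: each point of $R$ is a limit approached by detours that go out and back. Then the area formula gives $2L\ge 2\H^1(\bigcup G_n)+2\H^1(R)$, i.e. $L\ge\H^1(X)$, forcing $L=\H^1(X)$.

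Unit speed follows from $\ell(\gamma|_{[a,b]})\le b-a$ together with the total length $2\H^1(X)=\int|\gamma^{-1}|\,d\H^1\le\int_0^{2L}M\!D$. This forces $M\!D=1$ a.e. and multiplicity exactly $2$ $\H^1$-a.e., which is (3). Integrating $M\!D=1$ over $[a,b]$ then gives $\ell=b-a$, which is (2).
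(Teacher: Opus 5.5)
The paper gives no proof of this statement; it is quoted from \cite[Theorem~4.4]{Golab}, so your argument has to be judged on its own terms. Your overall scheme is sound: exhaust $X$ by finite trees of arcs $G_n$ through a dense sequence, splice out-and-back detours into a closed double tour, pass to the limit, and use the area formula to show that nothing of positive $\H^1$-measure is missed. This rests on the standard inputs you invoke: arcwise connectedness by rectifiable arcs, $\H^1$ of an arc equals its length, and $\ell(\gamma|_{[a,b]})=\int_a^b M\!D(\gamma,t)\,dt$ for Lipschitz curves.

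The step you single out as the crux, however, is not argued correctly. The claim that $\gamma$ has multiplicity at least $2$ a.e.\ on $R=X\setminus\bigcup_n G_n$ by ``two-sidedness'' is unsupported, and the heuristic is false pointwise. A point of $R$ can be a leaf of the limiting tree and be hit only once: take $X=[0,1]$ and a dense sequence avoiding $1$; then $1\in R$ is a turning point of the limit tour. The a.e.\ statement only becomes true, vacuously, once $\H^1(R)=0$ is known, which is what you want to prove.

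You do not need it. Since $\H^1(\bigcup_n G_n)=\lim_n\H^1(G_n)=L$, multiplicity $\ge 2$ on $\bigcup_n G_n$ together with multiplicity $\ge 1$ on $R$ (surjectivity) already gives
\[
2L\ \ge\ \int_0^{2L} M\!D(\gamma,t)\,dt\ =\ \int_X|\gamma^{-1}(y)|\,d\H^1(y)\ \ge\ 2L+\H^1(R),
\]
hence $\H^1(R)=0$ and $L=\H^1(X)$.

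What you \emph{do} need, and only gesture at, is that the limit curve has multiplicity $\ge 2$ on $\bigcup_n G_n$. This follows from the monotone splicing:
\begin{enumerate}
\item Extend each $\gamma_m$ constantly to $[0,2L]$. A splice at time $s_m$ inserting an out-and-back detour along an arc of length $a_m$ gives $\sup_t d(\gamma_{m+1}(t),\gamma_m(t))\le 3a_m$. Since $\sum_m a_m\le\H^1(X)$, you get $\gamma_m\to\gamma$ uniformly, with no subsequence needed.
\item Take $y\in G_n$ off the finitely many vertices, with $\gamma_n^{-1}(y)=\{t_1<t_2\}$. Let $\tau_m(t_i)$ be the tracked times: kept if $\le s_m$, shifted by $2a_m$ otherwise.
\item Then $\gamma_m(\tau_m(t_i))=y$, the $\tau_m(t_i)$ are nondecreasing and bounded by $2L$, and $\tau_m(t_2)-\tau_m(t_1)\ge t_2-t_1$.
\item Hence their limits are two distinct times at which $\gamma$ takes the value $y$.
\end{enumerate}
With this repair, your closing chain $2\H^1(X)\le\int_X|\gamma^{-1}|\,d\H^1=\int_0^{2L}M\!D\le 2L=2\H^1(X)$ yields (2) and (3) as you say.
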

	
	We need a corollary giving a slightly stronger version of the previous theorem that allows us to start the map at an arbitrary point. We believe that it can be achieved with the same construction yielding Theorem~\ref{thm:Waz}, but it also follows from a simple doubling argument that we present next.
	
	\begin{corollary} \label{cor:Waz}
		Let $X$ be a compact, connected metric space with $\H^1(X)<\infty$. Then for any $x_0 \in X$, there exists a map $\gamma\colon [0,4\H^1(X)] \to X$ with the following properties:
		\begin{enumerate}
			\item $\gamma$ is surjective.
			\item $\gamma(0) = x_0$.
			\item $\gamma$ is 1-Lipschitz with unit speed.
			\item For $\H^1$-a.e. $x\in X$, we have $|\gamma^{-1}(\{x\})| = 4$.
		\end{enumerate}
	\end{corollary}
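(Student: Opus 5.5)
Let $\gamma_0\colon[0,L]\to X$, $L:=2\H^1(X)$, be the map given by Theorem~\ref{thm:Waz}, and let $t_0\in[0,L]$ with $\gamma_0(t_0)=x_0$; such a $t_0$ exists by surjectivity. The doubling idea is to traverse $\gamma_0$ backwards from $t_0$ to $0$, then forwards from $0$ to $L$, then backwards from $L$ to $t_0$. The total length is $t_0+L+(L-t_0)=2L=4\H^1(X)$. Explicitly, define $\gamma\colon[0,2L]\to X$ by
\[
\gamma(s)=\begin{cases}\gamma_0(t_0-s), & s\in[0,t_0],\\ \gamma_0(s-t_0), & s\in[t_0,t_0+L],\\ \gamma_0(2L+t_0-s), & s\in[t_0+L,2L].\end{cases}
\]
At the junctions the pieces agree: at $s=t_0$ both formulas give $\gamma_0(0)$, and at $s=t_0+L$ both give $\gamma_0(L)$. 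So $\gamma$ is well defined and continuous, $\gamma(0)=\gamma_0(t_0)=x_0$, and $\gamma$ is surjective because its middle piece already covers $\gamma_0([0,L])=X$.

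For the unit speed property, each piece is an affine reparametrization of $\gamma_0$ by $s\mapsto\pm s+c$. Length is invariant under reversing orientation, so on every subinterval contained in one piece the length equals the length of that subinterval. For a general $[a,b]\subseteq[0,2L]$, I split it at the at most two junction points and use additivity of length over concatenation. This gives $\ell(\gamma|_{[a,b]})=b-a$, and hence $\gamma$ is also $1$-Lipschitz.

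The multiplicity count is the only step needing care. Given $x\in X$, the preimage $\gamma^{-1}(\{x\})$ is the union of three sets, one from each piece. The first piece contributes the points of $\gamma_0^{-1}(\{x\})\cap[0,t_0]$, the second contributes all of $\gamma_0^{-1}(\{x\})$, and the third contributes the points of $\gamma_0^{-1}(\{x\})\cap[t_0,L]$. These sets can overlap only at the junction parameters $s=t_0$ and $s=t_0+L$, which map to $\gamma_0(0)$ and $\gamma_0(L)$. Now let $E$ be the set of $x$ with $|\gamma_0^{-1}(\{x\})|=2$; by Theorem~\ref{thm:Waz}(3) it has full $\H^1$-measure. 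Remove from $E$ the three points $\gamma_0(0)$, $\gamma_0(L)$ and $\gamma_0(t_0)$. For $x$ in what remains, the two preimages under $\gamma_0$ lie in $(0,L)\setminus\{t_0\}$, so each lies in exactly one of $[0,t_0]$ and $[t_0,L]$. Counting, we get $|\gamma^{-1}(\{x\})|=2+2=4$. Finitely many points are $\H^1$-null, so $|\gamma^{-1}(\{x\})|=4$ holds $\H^1$-a.e., which proves (4).
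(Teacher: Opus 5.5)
Your proof is correct and is essentially the paper's doubling argument. Your three-piece formula is exactly the paper's loop $\gamma_0*\bar\gamma_0$ (traverse $\gamma_0$, then its reverse) with the circular domain rotated so that parameter $0$ corresponds to the return-pass time $2L-t_0$; you write this rotation out explicitly and verify the unit-speed and multiplicity properties (splitting at the junctions and discarding the finitely many seam points), which the paper leaves implicit.
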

	
	\begin{proof}
		Let $\gamma\colon [0,2\H^1(X)] \to X$ be a map satisfying the conclusion of Theorem~\ref{thm:Waz}. Let $\bar{\gamma}\colon [2\H^1(X),4\H^1(X)]\to X$ be the reverse of $\gamma$, meaning $\bar{\gamma}(t) = \gamma(4\H^1(X)-t)$ for $t\in [2\H^1(X),4\H^1(X)]$. Then we form the concatenation
		$\gamma*\bar{\gamma}: [0,4\H^1(X)] \to X$ defined by $\gamma*\bar{\gamma}\big|_{[0,2\H^1(X)]} = \gamma$ and $\gamma*\bar{\gamma}\big|_{[2\H^1(X),4\H^1(X)]} = \bar{\gamma}$. Then $\gamma*\bar{\gamma}$ satisfies the desired properties, except that $\gamma*\bar{\gamma}(0)$ equals $\gamma(0)$, while we need it to equal $x_0$. But note that $\gamma*\bar{\gamma}$ is actually a map on the circle of arclength $4\H^1(X)$ because $\gamma*\bar{\gamma}(0) = \gamma(0) = \gamma*\bar{\gamma}(4\H^1(X))$. Then, since $\gamma$ is surjective, by isometrically rotating the domain, we may assume that the map sends 0 to $x_0$. 
	\end{proof}
	
	As we shall need to deal with maps $\gamma$ as in Corollary \ref{cor:Waz} often, the next definition introduces a terminology for such maps. 
	
	\begin{definition}
		In recognition of Ważewski's work \cite{Waz}, we call any $\gamma\colon [0,4\H^1(X)] \to X$ satisfying the conclusion of Corollary~\ref{cor:Waz} a {\it W-map}.
	\end{definition}

	\subsection{Tree Parametrization}
	
	The following proposition, relying heavily on Kirchheim's theorems, is the key technical result allowing us to prove our main theorem. It says that a W-map admits a partial inverse that is infinitesimally 1-Lipschitz almost everywhere.
	
	\begin{proposition} \label{prop:gamma^(-1)}
		Let $(X,d)$ be a compact, connected metric space with $0<\H^1(X)<\infty$. Set $I := [0,4\H^1(X)]$, and let $\gamma\colon I \to X$ be a W-map. Then the following statements are true:
		\begin{enumerate}
			\item $M\! D(\gamma,t) = 1$ for $\lambda$-a.e. $t\in I$.
			\item There exists a Borel subset $A \sbs I$ such that
			\begin{enumerate}
				\item $\gamma\big|_A$ is injective,
				\item $\gamma(A)$ is Borel and has full $\H^1$-measure in $X$, and
				\item for every Borel subset $B \sbs \gamma(A)$, there exists a Borel set $B' \sbs B$ with full $\H^1$-measure in $B$ such that for every $x\in B'$, there exists a sequence $(x_n)_{n=0}^\infty\sbs B'\setminus\{x\}$ with $\lim_n x_n= x$ and $\lim_{n\to\infty} \frac{|\psi(x)-\psi(x_n)|}{d(x,x_n)} = 1$, where $\psi: \gamma(A) \to A$ denotes the inverse of $\gamma\big|_A$.
			\end{enumerate}
		\end{enumerate}
	\end{proposition}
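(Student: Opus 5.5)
For (1), I would combine Kirchheim's theorems. Since $\gamma$ is $1$-Lipschitz, $M\!D(\gamma,t)\le 1$ wherever it exists, and by Theorem~\ref{eq:Kirchheimdiff} it exists for $\lambda$-a.e.\ $t\in I$. The Area Formula (Theorem~\ref{thm:areaformula}) with $A=I$ then gives
\[\int_I M\!D(\gamma,t)\,d\lambda(t)=\int_X|\gamma^{-1}(y)|\,d\H^1(y)=4\H^1(X)=\lambda(I),\]
using property (4) of a W-map. Since $0\le M\!D\le1$ a.e., equality of the integrals forces $M\!D(\gamma,t)=1$ for a.e.\ $t$.

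For (2), I would first pick a Borel set $G\sbs I$ of full measure on which $M\!D(\gamma,\cdot)=1$. I would then remove the set $\gamma^{-1}(N)$, where $N\sbs X$ is a Borel $\H^1$-null set containing all points without exactly four preimages. This is possible by Borel regularity, and $\gamma^{-1}(N)$ is $\lambda$-null by the Area Formula applied to $\gamma^{-1}(N)$, since its integrand vanishes off a null set. Now every $x\in X\setminus N$ has exactly four preimages, and I would select the smallest one, which lies in $G$ after further discarding a null set. Concretely, I would set $A:=\{t\in G\setminus\gamma^{-1}(N): \gamma(s)\neq\gamma(t)\ \forall s<t\}$. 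Injectivity (a) is immediate. To see that $A$ is Borel, I would write it as $G'\cap\{t: \gamma(t)\notin\gamma([0,t))\}$, which is Borel since $\gamma([0,t))$ is a countable union of compact sets depending monotonically on $t$; alternatively, I would replace $A$ by a Borel subset of full measure. For (b), $\gamma(A)$ is the injective continuous image of a Borel set, hence Borel by Lusin--Souslin. Its complement is contained in $N\cup\gamma(I\setminus G)$ together with points whose first preimage is bad. This is $\H^1$-null because $\gamma$ is Lipschitz, so $\gamma$ maps $\lambda$-null sets to $\H^1$-null sets.

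For (c), given a Borel set $B\sbs\gamma(A)$, I would put $C:=\psi(B)=\gamma^{-1}(B)\cap A$, which is Borel. Then $C$ has density-one points almost everywhere by the Lebesgue density theorem, and $\gamma$, $\psi$ preserve null sets between $C$ and $B$; the direction from $\H^1$ to $\lambda$ follows from the Area Formula on $A$, where the multiplicity is $1$ and $M\!D=1$. I would take $B'=\gamma(C')$, where $C'\sbs C$ is the set of density points $t$ of $C$ at which $M\!D(\gamma,t)=1$, shrunk so that it is Borel; full measure in $B$ follows from the same null-set transfer. For $x=\gamma(t)\in B'$, I would choose $t_n\in C'\setminus\{t\}$ with $t_n\to t$, which is possible since $C'$ also has density one at $t$. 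Setting $x_n=\gamma(t_n)$, injectivity gives $x_n\neq x$ and $\psi(x_n)=t_n$, while $M\!D(\gamma,t)=1$ yields $d(x,x_n)/|t-t_n|\to1$, which is exactly the required limit.

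The main obstacle I expect is the measurability bookkeeping. One must ensure that $A$, $\gamma(A)$ and $B'$ are genuinely Borel rather than merely analytic or $\H^1$-measurable, and that null sets transfer correctly in both directions between $(I,\lambda)$ and $(X,\H^1)$. I would handle this with Lusin--Souslin and by passing to Borel subsets of full measure, as recalled in \S\ref{subsec:H_1}.
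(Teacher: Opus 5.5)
Your proof is correct. Part (1) is exactly the paper's argument. For (2) you follow the same overall plan as the paper: select the minimal preimage, work only with parameters where $M\!D(\gamma,\cdot)=1$, and pass to a full-measure subset of $B$ whose points are accumulation points of it. You use different tools in two places, however.

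\emph{Borelness of $A$.} The paper shows that $x\mapsto\gamma^{-1}(\{x\})$ is Borel into $Cl(I)$ with its Effros--Borel structure, composes with the continuous map $F\mapsto\min F$, and applies Lusin--Souslin to obtain $A=\psi(E)$. You instead describe the first-hitting set $D=\{t:\gamma(t)\notin\gamma([0,t))\}$ directly, which is more elementary. Your stated justification is vague; the clean one is that $I\setminus D=\bigcup_n F_n$ with $F_n=\{t\in I:\ \gamma(t)=\gamma(s)\text{ for some } s\le t-\tfrac1n\}$, each $F_n$ being closed by compactness of $I$, so $D$ is $G_\delta$.

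\emph{Part (c).} The paper exhausts $\psi(B)$ by compact sets $A_j$, so that $\psi$ is continuous on each $K_j=\gamma(A_j)$, and uses Cantor--Bendixson to make the $K_j$ perfect. It needs this because it picks the approximating points $x_n$ in $X$ and must then know that $\psi(x_n)\to\psi(x)$. You pick the approximating sequence $t_n$ in the parameter interval, among Lebesgue density points of $C=\psi(B)$, and push it forward by $\gamma$, so continuity of $\psi$ never enters. The set $B'=\gamma(C')$ is Borel by Lusin--Souslin, and $\H^1(B\setminus B')=\H^1(\gamma(C\setminus C'))=0$ because $\gamma$ is Lipschitz. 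This is a slightly cleaner route.

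\emph{Superfluous bookkeeping.} The set $N$ and the removal of $\gamma^{-1}(N)$ play no role: with $A:=G\cap D$ one already has $X\setminus\gamma(A)\subseteq\gamma(I\setminus G)$. Likewise, the transfer of null sets from $\H^1$ to $\lambda$ via the Area Formula is never used. If you do keep the claim $\lambda(\gamma^{-1}(N))=0$, note that the Area Formula only gives $M\!D(\gamma,\cdot)=0$ a.e.\ on $\gamma^{-1}(N)$; you need part (1) to conclude.
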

	
	\begin{proof}
		Note that $\Lip(\gamma) \leq 1$ clearly implies that $0\leq M\! D(\gamma,t) \leq 1$ for every $t \in I$ for which $M\! D(\gamma,t)$ exists, which is $\lambda$-a.e.\ $t\in I$ by Kirchheim Differentiation (Theorem~\ref{eq:Kirchheimdiff}). By Corollary~\ref{cor:Waz}(3) and the Area Formula (Theorem~\ref{thm:areaformula}), this implies
		\begin{equation*}
			4\H^1(X) \geq \int_I M\! D(\gamma,t)\, d\lambda(t) = \int_{\gamma(I)} 4\, d\H^1(y) = 4\H^1(X).
		\end{equation*}
		Hence it must happen that $M\! D(\gamma,t) = 1$ for $\lambda$-a.e. $t \in I$. This completes the proof of (1).
		
		For (2), we first use Corollary~\ref{cor:Waz}(3) and the results in Section \ref{subsec:H_1} to pick $E \in \cal B(X)$, with full $\cal H^1$-measure and such that for all $x\in E$, $|\gamma^{-1}(\{x\})| = 4$. Then we claim that there exists a Borel set $A \sbs I$ such that $\gamma\big|_A$ is injective and $\gamma(A)=E$. We first show that $\phi_1: x\mapsto \gamma^{-1}(\{x\})$ is Borel from $X$ to ${\cal Cl}(I)$ equipped with its Effros--Borel structure. Indeed, for a fixed closed subset $F$ of $I$, we have that $\gamma^{-1}(\{x\})\cap F \neq \emptyset$ if and only if $x\in \gamma(F)$. Since $F$ is compact, so is $\gamma(F)$. Since $I$ is compact, this shows, by Proposition \ref{p:EffrosBorel_on compact}, that $\phi_1$ is Borel. Recall (Proposition \ref{p:EffrosBorel_on compact}) that $I$ being compact also implies that the Effros--Borel structure of ${\cal Cl}(I)$ coincide with the Borel $\sigma$-algebra associated with the Hausdorff metric, for which the map $\phi_2\colon F\mapsto \min(F)$ is clearly continuous from ${\cal Cl}(I)\setminus \{\emptyset\}$ to $I$. So $\psi=\phi_2 \circ \phi_1\colon E\to I$ is Borel and injective. It finally follows from a theorem of Lusin and Souslin (see \cite[Theorem 15.1]{KechrisBook1995}) that $A=\psi(E)$ is Borel, and of course, $\gamma(A)=E$ and $\gamma\big|_A=\psi^{-1}$ is injective. This shows (a) and (b).
		
		By part (1), we may further assume that $M\! D(\gamma,t) = 1$ for every $t\in A$. Let $B \sbs \gamma(A)$ be Borel. Note that $\psi(B)$ is a Borel subset of $A$, due to the previously mentioned theorem of Lusin and Souslin. So there exists a sequence $(A_j)_j$ of compact subsets of $\psi(B)$ such that $\cup_j A_j$ has full $\lambda$-measure in $\psi(B)$. Since $\gamma$ is Lipschitz, $K_j=\gamma(A_j)$ is compact, $\cup_jK_j$ has full $\cal H^1$-measure in $B$, and $\gamma\big|_{A_j}$ is a homeomorphism from $A_j$ onto $K_j$, with inverse $\psi\big|_{K_j}$. Using repeatedly the Cantor--Bendixson derivation, we take off the $K_j$'s $\H^1$-null sets, so we may assume that each $K_j$ is perfect (we are using here that $\H^1(X)>0$). We set $B' := \cup_j K_j$. Now let $x \in B'$, so that $x \in K_j$ for some $j\in\N$. Since $K_j$ is perfect, we may find a sequence $(x_n)_n\sbs K_j\setminus\{x\}$ such that $x_n\to x$. Then, since $\psi\big|_{K_j}$ is continuous, $(\psi(x_n))_n \sbs A\setminus\{\psi(x)\}$ and $ \psi(x_n) \to \psi(x) \in A$, and hence we have
		\begin{align*}
			1 = M\! D(\gamma,\psi(x)) = \lim_{n\to\infty} \frac{d(\gamma(\psi(x)),\gamma(\psi(x_n)))}{|\psi(x)-\psi(x_n)|} = \lim_{n\to\infty} \frac{d(x,x_n)}{|\psi(x)-\psi(x_n)|},
		\end{align*}
		implying the desired result.
	\end{proof}
	
	\begin{lemma} \label{lem:W-mapgeodesics}
		Let $X$ be a compact, connected metric space with $0<\H^1(X)<\infty$. Set $I := [0,4\H^1(X)]$, and let $\gamma\colon I \to X$ be a W-map. Then for every geodesic path $g: [a,b]\to X$ and every Lipschitz map $f\colon X\to\R$, we have that $\Lip(f \circ g) \leq \Lip(f\circ\gamma)$.
	\end{lemma}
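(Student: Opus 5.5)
Let $L := \Lip(f\circ\gamma)$. Since $f\circ g$ is a Lipschitz function on $[a,b]$, it is differentiable a.e. and absolutely continuous. Hence it suffices to show that $|(f\circ g)'(s)| \le L$ for $\lambda$-a.e.\ $s\in[a,b]$. Then $|f(g(s))-f(g(s'))| \le L|s-s'|$ for all $s,s'$.

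\textbf{Step 1: choosing good points on the geodesic.} Since $g$ is an isometry, it preserves $\H^1$, so $\H^1(g(E)) = \lambda(E)$ for Borel $E\sbs[a,b]$. Apply Proposition~\ref{prop:gamma^(-1)}(2)(c) to the Borel set $B := g([a,b])\cap\gamma(A)$; this set is Borel since $g([a,b])$ is compact. We obtain $B'\sbs B$ of full $\H^1$-measure in $B$, and $B'$ has full measure in $g([a,b])$ because $\gamma(A)$ has full measure in $X$. So for a.e.\ $s$ we have $x := g(s)\in B'$ and $f\circ g$ is differentiable at $s$.

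\textbf{Step 2: the derivative bound at a good point.} The proposition gives $x_n\in B'\setminus\{x\}$ with $x_n\to x$ and $|\psi(x)-\psi(x_n)|/d(x,x_n)\to 1$. Each $x_n$ lies on $g([a,b])$ since $B'\sbs B$, so $x_n = g(s_n)$ with $s_n\to s$ and $|s_n-s| = d(x,x_n)$, because $g$ is an isometry with continuous inverse on its image. Now
\[
\frac{|f(g(s))-f(g(s_n))|}{|s-s_n|} = \frac{|f(\gamma(\psi(x)))-f(\gamma(\psi(x_n)))|}{|\psi(x)-\psi(x_n)|}\cdot\frac{|\psi(x)-\psi(x_n)|}{d(x,x_n)} \le L\cdot\frac{|\psi(x)-\psi(x_n)|}{d(x,x_n)}.
\]
Letting $n\to\infty$, the left side tends to $|(f\circ g)'(s)|$ by differentiability at $s$, and the right side tends to $L$. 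Hence $|(f\circ g)'(s)|\le L$.

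\textbf{Main obstacle.} The delicate point is ensuring that the approximating sequence $(x_n)$ lies on the geodesic itself, so that the difference quotients of $f\circ g$ are controlled. The sequences provided by the proposition a priori live in $\gamma(A)$; they lie on the geodesic only because we feed (2)(c) the specific Borel set $B$ built from the geodesic.

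A second point to check is that $B'$ has full measure in $g([a,b])$ with respect to $\lambda$ pulled back by $g$. This uses that $g$ is an isometry, so the pullback of $\H^1$ on $g([a,b])$ equals $\lambda$.

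The remaining steps, namely measurability of $B$ and a.e.\ differentiability of $f\circ g$, are routine.
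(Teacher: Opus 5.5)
Your proposal is correct and matches the paper's proof essentially step by step: you reduce to an a.e.\ derivative bound, apply Proposition~\ref{prop:gamma^(-1)}(2)(c) to the trace of $\gamma(A)$ on the geodesic, and factor the difference quotient through $\psi$. The only cosmetic difference is that you take $B = g([a,b])\cap\gamma(A)$ where the paper takes $g(A')\cap\gamma(A)$ with $A'$ the differentiability set. This change is harmless, because differentiability at $s$ by itself controls the quotients along any sequence $s_n\to s$ with $s_n\neq s$.
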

	
	\begin{proof}
		Let $f: X \to \R$ be Lipschitz and $g: [a,b]\to X$ a geodesic path. The composition $f\circ g$ is a Lipschitz function from an interval to $\R$, and hence $\Lip(f\circ g) = \|(f\circ g)'\|_\infty$ by the Lebesgue Fundamental Theorem of Calculus. Therefore, we need to show that $\|(f\circ g)'\|_\infty \leq \Lip(f\circ\gamma)$.
		
		Let $A \sbs I$ be a Borel subset satisfying the conclusion of Proposition~\ref{prop:gamma^(-1)}(2). Let $A' \sbs [a,b]$ be Borel and of full $\lambda$-measure in $[a,b]$ such that $(f\circ g)'(t)$ exists for every $t\in A'$. We recall that $\gamma(A)$ is Borel in $X$. Since $g$ is an isometry from $[a,b]$ onto its compact image, it is clear that $g(A')$ is also Borel in $X$. Applying Proposition~\ref{prop:gamma^(-1)}(2)(c) to the Borel subset $B := g(A')\cap\gamma(A)$ of $X$, we get that there exists a Borel set $B' \sbs B$ with full $\H^1$-measure in $B$ such that for every $x\in B'$, there exists a sequence $(x_n)_n\sbs B'\setminus\{x\}$ with $x_n\to x$ and $\lim_{n\to\infty} \tfrac{|\psi(x)-\psi(x_n)|}{d(x,x_n)} = 1$, where $\psi\colon \gamma(A) \to A$ still denotes the inverse of $\gamma\big|_A$.
		
		Now consider the Borel set $g^{-1}(B') \sbs A' \cap g^{-1}(\gamma(A))$. This set has full $\lambda$-measure in $[a,b]$ since $g$ is an isometric embedding and since\\
		(i) $B'$ has full $\H^1$-measure in $g(A')\cap \gamma(A)$,\\
		(ii) $A'$ has full $\lambda$-measure in $[a,b]$, and\\ (iii) $\gamma(A)$ has full $\H^1$-measure in $X$.
		
		Let $t \in g^{-1}(B')$. Then $g(t) \in B'$, and hence, by definition of $B'$, we can find a sequence $(x_n)_n\sbs B'\setminus\{g(t)\}$ with $x_n\to g(t)$ and
		\begin{equation} \label{eq:MD(gamma^(-1))1}
			\lim_{n\to\infty} \dfrac{|\psi(g(t))-\psi(x_n)|}{d(g(t),x_n)} = 1.
		\end{equation}
		Then, since $g$ is an isometric embedding, there is a sequence $(t_n)_n \sbs A'\setminus\{t\}$ such that $t_n \to t \in A'$ and $g(t_n) = x_n$. Hence, by definition of $A'$, we have that
		\begin{equation} \label{eq:f(g)'}
			(f \circ g)'(t) = \lim_{n\to\infty} \frac{f(g(t))-f(g(t_n))}{t-t_n}.
		\end{equation}
		Also, plugging $g(t_n) = x_n$ into \eqref{eq:MD(gamma^(-1))1} and using that $g$ is isometric gives us
		\begin{equation} \label{eq:MD(gamma^(-1))2}
			\lim_{n\to\infty} \dfrac{|\psi(g(t))-\psi(g(t_n))|}{|t-t_n|} = 1.
		\end{equation}
		Combining these last two equations yields
		\begin{align*}
			|(f \circ g)'(t)| &\overset{\eqref{eq:f(g)'}}{=} \lim_{n\to\infty} \frac{|f(g(t))-f(g(t_n))|}{|t-t_n|} \\
			&= \lim_{n\to\infty} \frac{|(f\circ\gamma)(\psi(g(t)))-(f\circ\gamma)(\psi(g(t_n)))|}{|\psi(g(t))-\psi(g(t_n))|} \frac{|\psi(g(t))-\psi(g(t_n))|}{|t-t_n|} \\
			&\leq \limsup_{n\to\infty} \Lip(f\circ\gamma) \frac{|\psi(g(t))-\psi(g(t_n))|}{|t-t_n|} \\
			&\overset{\eqref{eq:MD(gamma^(-1))2}}{=} \Lip(f\circ\gamma).
		\end{align*}
		Since $g^{-1}(B')$ has full $\lambda$-measure in $[a,b]$ and since $t \in g^{-1}(B')$ was arbitrary, this shows that $\|(f \circ g)'\|_\infty \leq \Lip(f\circ\gamma)$.
	\end{proof}
	
	We now arrive at the main result of this section. We say that an $\R$-tree $T$ has {\it locally finite $\lambda_T$-measure} if $\lambda_T(K)<\infty$ for all compact $K\sbs T$.
	
	\begin{theorem}[Tree Parametrization] \label{thm:isometric}
		Let $X$ be a pointed, proper, geodesic metric space with locally finite $\H^1$-measure. Then there is a pointed proper $\R$-tree $T$ with locally finite $\lambda_T$-measure and a proper $0$-preserving Lipschitz map $\gamma: T \to X$ such that the pullback map $f \mapsto f\circ\gamma\colon \Lip_0(X) \to \Lip_0(T)$ is an isometric embedding.
	\end{theorem}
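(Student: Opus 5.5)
We build $T$ by gluing together W-maps of an exhaustion of $X$ by compact connected pieces. Fix the base point $0\in X$ and let $X_n := \overline{B}(0,n)$ for $n\ge 1$. Since $X$ is geodesic, each $X_n$ is connected. It is compact because $X$ is proper, and it has $\H^1(X_n)<\infty$ by local finiteness. Discarding trivial cases (if $\H^1(X_n)=0$ then $X_n=\{0\}$, by connectedness and the fact that the diameter of a connected set is at most its $\H^1$-measure), Corollary~\ref{cor:Waz} gives W-maps $\gamma_n\colon I_n=[0,4\H^1(X_n)]\to X_n$ with $\gamma_n(0)=0$.

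\textbf{Construction of $T$.} Let $T$ be the $\R$-tree obtained as a ``comb'': the half-line $[0,\infty)$ as a spine, with a segment $J_n$ of length $4\H^1(X_n)$ attached at the point $n$ of the spine. Alternatively, one can attach the intervals $I_n$ at a common root $0$ via their left endpoints, but with an extra spine segment of length $n$ so that the space stays proper. Define $\gamma$ on the spine by sending $s\in[0,\infty)$ along a geodesic; concretely, on $[n-1,n]$ it follows a geodesic from a point of $X_{n-1}$ to a point of $X_n$. The map on $J_n$ is then a W-map of $X_n$ started at the image of the attachment point; Corollary~\ref{cor:Waz} allows an arbitrary starting point, and this is exactly why the doubling corollary is needed. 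The spine may be chosen to be the geodesic from $0$ toward a far point, parametrized so that $\gamma(n)$ lies at distance about $n$ from $0$; if such a ray does not exist, one simply uses geodesic segments from $0$. Properness is the point to be careful about. The piece of $T$ at distance $\le R$ from the root consists of finitely many segments (the spine portion and the $J_n$ with $n\le R$), so it is compact with finite $\lambda_T$-measure, and it contains $\gamma^{-1}$ of any ball. Indeed $\gamma$ maps $J_n$ into $X_n$, and we arrange the spine so that $d(0,\gamma(s))\ge s-C$. $\gamma$ is $1$-Lipschitz on each segment, hence $1$-Lipschitz on $T$ by concatenation along geodesics of the tree. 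Thus $\gamma$ is proper, $0$-preserving and Lipschitz, and $\|f\circ\gamma\|_L\le\|f\|_L$.

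\textbf{Reverse inequality.} Let $f\in\Lip_0(X)$ and let $L:=\Lip(f\circ\gamma)$. For $x,y\in X$ choose a geodesic path $g$ from $x$ to $y$, and pick $n$ with $g$ contained in $X_n$. Geodesics in $X$ between points of $X_n$ may leave $X_n$, but only into $X_{3n}$, so we choose $n$ large enough. Lemma~\ref{lem:W-mapgeodesics}, applied to the compact connected space $X_n$ with its W-map $\gamma_n$ and the restriction $f|_{X_n}$, gives
\[
|f(x)-f(y)|\le \Lip(f\circ g)\,d(x,y)\le \Lip(f\circ\gamma_n)\,d(x,y)\le L\,d(x,y),
\]
where the last step holds because $\gamma_n$ is a reparametrization of $\gamma|_{J_n}$. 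Hence $\|f\|_L\le\|f\circ\gamma\|_L$, and the pullback is an isometric embedding.

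\textbf{Main obstacle.} The delicate point is arranging the tree and the spine so that $\gamma$ is genuinely proper and $T$ is proper with locally finite length measure. This is why each $J_n$ is attached at a spine point far from the root rather than at the root itself. If the spine is awkward, I would first try to avoid it entirely by attaching $J_n$ at the root but inserting a ``dead'' segment of length $n$ mapped constantly to $0$; a constant map is trivially $1$-Lipschitz. That repairs properness of $T$, but it breaks properness of $\gamma$, since this segment maps into the compact set $\{0\}$. So a genuine spine escaping to infinity, or trimming $T$ when $X$ is bounded (finitely many pieces suffice), seems necessary. The Lipschitz-isometry part is essentially handled by Lemma~\ref{lem:W-mapgeodesics}.
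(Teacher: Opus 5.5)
There is a genuine gap. Your tree $T$ is proper with locally finite $\lambda_T$-measure, and your reverse inequality via Lemma~\ref{lem:W-mapgeodesics} is fine. However, the map $\gamma$ is not proper when $X$ is unbounded, so the construction does not prove the statement.

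The restriction of $\gamma$ to $J_n$ is a W-map of $X_n=\overline{B}(0,n)$, hence surjective onto $X_n$. In particular $0\in\gamma(J_n)$ for every $n$. Since $J_n$ hangs off the spine at the point $n$, all of its points are at tree distance at least $n$ from the root. So $\gamma^{-1}(\{0\})$ is unbounded in $T$ and cannot be compact.

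The inclusion $\gamma(J_n)\subseteq X_n$ that you invoke points the wrong way. Properness requires that far parts of $T$ map to far parts of $X$: for each $N$ there must be $R$ with $\gamma(T\setminus B_R(0))\subseteq X\setminus B_N(0)$. This is exactly what is used later in \eqref{eq:farsupport} in the proof of Theorem~\ref{thm:kernel}. An escaping spine cannot repair this, because the branches attached to it come back to $0$. It is the same defect you identified for the ``dead'' segments, and it occurs whenever W-maps of balls around $0$ are used infinitely often.

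The missing idea is to parametrize pieces that themselves escape to infinity. The paper uses closed annuli $A_n=\{x: r_{n-1}\le d(0,x)\le r_n\}$. Since these need not be connected, it proceeds componentwise:
\begin{itemize}
\item For each connected component $C$ of $A_n$, it takes a W-map of $C$ started at a point $x_C\in C$ with $d(0,x_C)=r_{n-1}$; such a point exists because $X$ is geodesic.
\item It attaches the corresponding branch to the tree already built over $A_{\le n-1}$ at a preimage of $x_C$, and takes a completion afterwards, since there may be infinitely many components.
\end{itemize}
Branches created at stage $n$ then map into $\{d(0,\cdot)\ge r_{n-1}\}$, which gives properness.

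The price is paid in your reverse inequality, since a geodesic no longer lies in a single piece. The paper handles this as follows:
\begin{itemize}
\item It builds two such trees from two families of radii interlaced with gaps larger than $1$. Then every geodesic of length at most $1$ lies in a single component of a single annulus.
\item It wedges the two trees together at the root.
\item Since $X$ is geodesic, $\Lip(f)$ is the supremum of $\Lip(f\circ g)$ over such short geodesics $g$. Lemma~\ref{lem:W-mapgeodesics}, applied componentwise, then finishes the argument exactly as in your proposal.
\end{itemize}
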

	
	\begin{proof}
		Let $0$ denote the basepoint of $X$. Let $(r_n)_{n=0}^\infty$ be a sequence of positive radii strictly increasing to $\infty$. Let $A_0$ be the closed ball of radius $r_0$ centered at $0$. For $n \geq 1$, let $A_n$ be the closed annulus of inner radius $r_{n-1}$ and outer radius $r_n$ centered at $0$. Define $A_{\leq n} := \cup_{k\leq n}A_k$. We will recursively construct an increasing sequence of compact $\R$-trees $(T_n)_{n\geq 0}$ and  1-Lipschitz surjective maps $(\gamma_n\colon T_n \to A_{\leq n})_{n\geq 0}$ such that for all $n\geq 0$, 
		\begin{enumerate}
			\item   ${\gamma_{n+1}}$ extends $\gamma_n$,
			\item $\lambda_{T_n}(T_n) = 4\H^1(A_{\leq n})$,
			\item $\gamma_m^{-1}(A_n) \sbs T_{n}$ for all $m \geq n$, and
			\item for all geodesic paths $g_X\colon [a,b] \to A_{n}$, there exists a geodesic path $g_T\colon [\alpha,\beta] \to T_{n}$ such that $\gamma_{n}\circ g_T\colon [\alpha,\beta] \to X$ is a W-map and $\gamma_n(g_T([\alpha,\beta])) \sps g_X([a,b])$.
		\end{enumerate}
		
		The base case $n=0$ will follow along the same reasoning as the inductive step and we omit the details. Let $n \geq 1$ and suppose that $\gamma_{n-1}\colon T_{n-1} \to A_{\leq n-1}$ has been constructed satisfying (1)--(3) above. Let $C$ be a connected component of $A_n$. We will define the map $\gamma_n$ individually on each such $C$. Since the whole space $X$ is geodesic, the Intermediate Value Theorem guarantees the existence of a point $x_C \in C \cap A_{n-1}$. Since $C$ is compact, connected, and has finite $\H^1$-measure, Corollary~\ref{cor:Waz} yields a W-map $\gamma_C\colon [0,4\H^1(C)] \to C$ such that $\gamma_C(0) = x_C$. Then we attach a branch $B_C$ with length $4\H^1(C)$ to $T_{n-1}$ at any chosen point $t_C \in \gamma_{n-1}^{-1}(\{x_C\})$. We isometrically identify $[0,4\H^1(C)]$ with $B_C$ so that $0$ gets identified with $t_C$, and thus get a glued  1-Lipschitz map $\gamma_{n-1} \cup \gamma_C\colon T_{n-1} \cup B_C \to A_{\leq n-1} \cup C$. Repeating this over all the connected components $C$ of $A_n$ and taking the completion yields the desired $\R$-tree $T_n$ and 1-Lipschitz map $\gamma_n\colon T_n \to A_{\leq n}$. Item (1) holds by construction and by \eqref{eq:lambda=0}. Item (2) obviously holds by construction. Compactness of $T_n$ holds since it is complete by definition, and since is totally bounded because $\lambda_{T_n}(T_n)<\infty$. Item (3) is also easy to see, but we will provide a short explanation. Let $g_X\colon [a,b] \to A_{n}$ be a geodesic path. Since $g_X([a,b]) \sbs A_n$, it is contained in a connected component $C$ of $A_n$. It follows from the construction that there is a geodesic path $g_T\colon [0,4\H^1(C)] \to T_n$ such that $\gamma_n\circ g_T\colon [0,4\H^1(C)] \to X$ is a W-map and $\gamma_n(g_T([0,4\H^1(C)])) = C \sps g_X([a,b])$.
		
		Now, choose two sequences of radii $(r_n^{(1)})_{n\geq 0}$ and $(r_n^{(2)})_{n\geq 0}$ that intertwine with 1-separation, meaning
		$$0< r_0^{(1)} < r_0^{(2)} -1 < r_1^{(1)} - 2 < r_1^{(2)} - 3 < r_2^{(1)} - 4 < \dots.$$
		For $i\in \{1,2\}$, apply the construction of the previous paragraph using the sequence $(r_n^{(i)})_{n=0}^\infty$ to obtain an $\R$-tree $T^{(i)} := \cup_{n\geq 0} T^{(i)}_n$ and a 1-Lipschitz map \[\gamma^{(i)} := \cup_{n\geq 0}\gamma^{(i)}_n\colon  T^{(i)} \to \cup_{n\geq 0}A^{(i)}_n = X.\] Items (1) and (2) imply that $T^{(i)}$ is proper with locally finite $\lambda_{T^{(i)}}$-measure and that $\gamma^{(i)}$ is proper (and $\gamma^{(i)}$ is 1-Lipschitz since each $\gamma_n^{(i)}$ is 1-Lipschitz). We form another $\R$-tree $T$ and 1-Lipschitz map $\gamma\colon T \to X$ by wedging $\gamma^{(1)}\colon T^{(1)} \to X$ and $\gamma^{(2)}\colon  T^{(2)} \to X$ together along base points $t_0^{(1)} \in T^{(1)}$ and $t_0^{(2)} \in T^{(2)}$ with $\gamma^{(i)}(t_0^{(i)}) = 0$: $t_0^{(1)}$ and $t_0^{(2)}$ are identified with the base point $t_0$ of $T$.  It is clear that $T$ is a proper $\R$-tree with locally finite $\lambda_T$-measure. In order to show that the pullback under $\gamma$ is an isometric embedding from $\Lip_0(X)$ into $\Lip_0(T)$, it suffices to show that $\Lip(f\circ g_X)\leq \Lip(f\circ\gamma)$ for every Lipschitz map $f: X\to\R$ and every  geodesic path $g_X\colon [0,1] \to X$. Indeed, once this has been shown, since $X$ is geodesic and $\gamma$ is 1-Lipschitz, we have
		\[\Lip(f)=\sup_{\substack{g_X\colon [0,1]\to X\\ g_X\text{ geodesic}}}\Lip(f\circ g_X)\leq \Lip(f\circ \gamma)\leq \Lip(f).\]
		
		Let $g_X\colon [0,1] \to X$ be a geodesic   and let us show that $  \Lip(f\circ g_X)\leq \Lip(f\circ\gamma) $. Since $g_X$ is an isometry  and since the radii $(r_n^{(1)})_{n\geq 0},(r_n^{(2)})_{n\geq 0}$ intertwine with 1-separation, we must have that $g_X([0,1]) \sbs A_n^{(i)}$ for some $i\in\{1,2\}$ and $n\geq 0$. Then by (3), there exists a geodesic path $g_T\colon [\alpha,\beta] \to A_{n}^{(i)}$ such that $\gamma_{n}^{(i)}\circ g_T\colon [\alpha,\beta] \to X$ is a W-map and $\gamma_n^{(i)}(g_T([\alpha,\beta])) \sps g_X([0,1])$. Then by Lemma~\ref{lem:W-mapgeodesics}, we have that \[\Lip(f\circ g_X)\le \Lip(f \circ \gamma_n^{(i)} \circ g_T)=\Lip(f\circ\gamma_n^{(i)})\leq \Lip(f\circ \gamma).\qedhere\] 
	\end{proof}
	
	\subsection{The topology of local convergence in measure}
	
	The aim of this subsection is to prove the following proposition.
	
	\begin{proposition}[Vanishing Supports] \label{prop:vanishingsupport}
		Let $(X,d,0)$ be a pointed metric space and $\mu\in\F(X)$. Suppose that for every $N<\infty$, there exist $\mu_1,\mu_2\in\F(X)$ such that
		\begin{itemize}
			\item $\mu_1 \in \spn\{\delta_{y_i}-\delta_{x_i}\}_i$ for some finite subset $\{x_i,y_i\}_i \sbs X$ with $\sum_i d(x_i,y_i) \leq \frac{1}{N}$,
			\item $\mu_2\in\F((X\setminus B_{N}(0))\cup\{0\})$, where $B_N(0)=\{x\in X :d(x,0)\le N\}$, and
			\item $\|\mu-\mu_1-\mu_2\| \leq \frac{1}{N}$.
		\end{itemize}
		Then $\mu = 0$.
	\end{proposition}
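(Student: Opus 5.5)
The plan is to show that $\langle g,\mu\rangle=0$ for a norming family of test functions $g\in\Lip_0(X)$. For each such $g$ and each large $N$, I will build a modified function $k$ that annihilates both $\mu_1$ and $\mu_2$ while staying uniformly close to $g$.

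\emph{Step 1: reduction to boundedly supported test functions.} Suppose $\mu\neq 0$. Pick $f\in\Lip_0(X)$ with $\|f\|_L\le 1$ and $\langle f,\mu\rangle\neq0$. Pick a finitely supported $\nu\in\spn\{\delta_x\}$ with $\|\mu-\nu\|<\eps$, and let $R$ be such that $\supp\nu\sbs B_R(0)$. Set $h_R(x)=\max(0,\min(1,2-d(x,0)/R))$. On $\supp h_R$ we have $|f|\le 2R$, so $g:=fh_R$ satisfies $\Lip(g)\le 1+2R\cdot\frac1R=3$. Moreover $g=f$ on $B_R(0)$, so $\langle g,\nu\rangle=\langle f,\nu\rangle$, and hence $|\langle g,\mu\rangle-\langle f,\mu\rangle|\le 4\eps$. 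It therefore suffices to prove $\langle g,\mu\rangle=0$ for every $g\in\Lip_0(X)$ with $\Lip(g)\le L$ and $g=0$ outside $B_R(0)$.

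\emph{Step 2: killing $\mu_1$ and $\mu_2$.} Fix such a $g$ and take $N>R$, with $\mu_1,\mu_2$ as in the hypothesis. The coefficients of $\mu_1$ are uncontrolled, so we need a function that is exactly constant on each pair $\{x_i,y_i\}$.

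Let $\rho$ be the chain pseudometric on $X$ in which each jump $x_i\leftrightarrow y_i$ costs $0$ and ordinary steps cost $d$. Then $\rho\le d$, and $\rho\ge d-\sum_i d(x_i,y_i)\ge d-\frac1N$, since an optimal chain uses each jump at most once. Define
\[
h(p):=\inf_q\bigl(g(q)+L\rho(p,q)\bigr).
\]
This $h$ is $L$-Lipschitz for $\rho$, hence for $d$, and satisfies $h(x_i)=h(y_i)$. Using $\rho\ge d-\frac1N$ and $\Lip(g)\le L$, we get $g-\frac LN\le h\le g$.

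Now soft-threshold: set $k:=(h-\frac LN)^+-(h+\frac LN)^-$. Composition with a $1$-Lipschitz function of one variable preserves the Lipschitz bound $L$ and the identities $k(x_i)=k(y_i)$. Also $k=0$ wherever $|h|\le \frac LN$, in particular outside $B_R(0)$ and at $0$, and $\|k-g\|_\infty\le \frac{2L}N$.

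Consequently $\langle k,\mu_1\rangle=0$. Since $k$ vanishes on $(X\setminus B_N(0))\cup\{0\}$ and $N>R$, also $\langle k,\mu_2\rangle=0$; this uses that $\F$ of a subset embeds isometrically and is annihilated by functions vanishing on that subset. Thus $|\langle k,\mu\rangle|\le L\|\mu-\mu_1-\mu_2\|\le \frac LN$.

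\emph{Step 3: passing to the limit.} The difference $g-k$ has Lipschitz constant at most $2L$ and sup-norm at most $\frac{2L}{N}$. Writing $\nu=\sum_j a_j\delta_{z_j}$ for the approximant from Step 1, we get
\[
|\langle g-k,\mu\rangle|\le 2L\eps+\tfrac{2L}{N}\sum_j|a_j|.
\]
Hence $|\langle g,\mu\rangle|\le 2L\eps+\frac{2L}N\sum_j|a_j|+\frac LN$. Letting $N\to\infty$ and then $\eps\to0$ gives $\langle g,\mu\rangle=0$, which contradicts Step 1.

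The main obstacle is Step 2, because $\mu_1$ has no norm control. The key point is that the chain pseudometric $\rho$ yields a test function exactly constant on each pair $\{x_i,y_i\}$, at sup-distance $O(L/N)$ from $g$ and with the same Lipschitz bound. The soft-threshold step is then needed to restore the vanishing outside $B_R(0)$, which is what kills $\mu_2$.
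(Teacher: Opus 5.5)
Your proof is correct, but it takes the dual route to the paper's. The paper works in the predual. It approximates $\mu$ by a finitely supported $\mu_f$ with $r$-separated support. It then pushes everything forward along the quotient map $X\to X_1=X/{\sim}$ that identifies $x_i\sim y_i$, which kills $\mu_1$. Next it pushes forward along $X_1\to X_{12}=X_1/F_1$, which collapses the far region together with $0$ and kills $\mu_2$. The estimate $d_1\ge d-\frac1N$ makes the first quotient $(1-\frac{1}{Nr})$-coLipschitz on $\supp(\mu_f)$, and the second quotient is isometric near $0$. Together these give $\|\mu_f-\mu_1-\mu_2\|\ge(1-\frac{1}{Nr})\|\mu_f\|$.

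Your chain pseudometric $\rho$ is exactly the paper's quotient metric $d_1$. Your $k$ is, in effect, the pullback to $X$ of a Lipschitz function on a space like $X_{12}$: it is constant on each pair and vanishes far out and at $0$. The soft-threshold plays the role of the second quotient.

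The real difference is how the pairing is controlled. The paper uses a co-Lipschitz bound on a separated finite set. That needs the separation constant $r$, a check that the quotient semimetrics are genuine metrics, and implicitly McShane extension to compute norms in $\F(X_1)$ and $\F(X_{12})$. You need only the uniform estimate $\|g-k\|_\infty\le\frac{2L}{N}$ and the bound $|\langle\phi,\nu\rangle|\le\|\phi\|_\infty\sum_j|a_j|$ for a fixed finitely supported $\nu$. The price is the preliminary reduction to boundedly supported test functions, which is what lets the thresholded $k$ vanish on $(X\setminus B_N(0))\cup\{0\}$. Your version is more elementary, using only inf-convolution and truncation. The paper's version yields a clean quantitative norm inequality and stays within the functorial $\widehat f$ framework used elsewhere in the paper.

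One bookkeeping remark: your $g$ from Step 1 depends on $\eps$ through $R$. So in Step 3, either choose a fresh approximant after $g$ is fixed, or keep $\eps$ fixed and conclude $|\langle f,\mu\rangle|\le 10\eps$. Either reading closes the argument.
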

	
	Before proving the proposition, let us recall the notion of quotient metric spaces. Given an equivalence relation $\sim$ on a metric space $(X,d)$, let $X/{\sim}$ denote the set of equivalence classes equipped with the quotient semimetric $$d([p],[q]) = \inf\left\{\sum_{i=1}^n d(x_i,y_i): x_1 \sim p, y_i\sim x_{i+1}, y_n\sim q\right\}.$$
	If this semimetric is also a metric (i.e., if it is positive definite), then the equivalence relation $\sim$ is said to be {\it closed}. In the following proof, all the equivalence relations we consider are closed. If $F \sbs X$ is a closed subset, we write $X/F$ for $X/{\sim}$, where $\sim$ is the (closed) equivalence relation generated by $p\sim q$ if $p,q\in F$. If $X$ has basepoint $0$, we equip $X/{\sim}$ with basepoint $[0]$, which we will also write as $0$.
	
	\begin{proof}[Proof of Proposition~\ref{prop:vanishingsupport}]
		Let $\eps>0$ be arbitrary. Choose $\mu_f\in\F(X)$ with finite support of cardinality at least 2, such that $\|\mu-\mu_f\|< \eps$. Let \[r := \min_{p\neq q\in\supp(\mu_f)} d(p,q)\ \text{ and }\   R:= \max_{p\in\supp(\mu_f)} d(p,0).\] Let $N<\infty$ such that $N-\tfrac{1}{N} \geq 2R$ and $N > \tfrac{1}{r}$. By hypothesis, we can choose $\mu_1,\mu_2\in\F(X)$ such that
		\begin{itemize}
			\item $\mu_1 \in \spn\{\delta_{y_i}-\delta_{x_i}\}_i$ for some finite subset $\{x_i,y_i\}_i \sbs X$ with $\sum_i d(x_i,y_i) \leq \frac{1}{N}$,
			\item $\mu_2\in\F((X\setminus B_{N}(0))\cup\{0\})$, and
			\item $\|\mu-\mu_1-\mu_2\|_{\F(X)} \leq \frac{1}{N}$.
		\end{itemize}
		
		Let $\sim$ be the equivalence relation on $X$ generated by $x_i \sim y_i$ for all $i$. Let $X_1$ denote the quotient space $X/\hspace{-.1cm}\sim$, and denote by $d_1$ the quotient metric. It is easy to see that \begin{equation}\label{Eq.8oct.26.1}d_1([p],[q]) \geq d(p,q) - \sum_i d(x_i,y_i) \geq d(p,q) - \tfrac{1}{N}\end{equation}
		for all $[p],[q]\in X_1$. Obviously, the quotient map $f_1:X \to X_1$ is 1-Lipschitz, and \eqref{Eq.8oct.26.1} implies that \[d_1([p],[q]) \geq   \left(1-\frac{1}{Nr}\right)d(p,q), \ \ \text{if}\  d(p,q)\geq r.\]
		So, the restriction of  $f$ to any $r$-separated subset of $X$ is  $(1-\tfrac{1}{Nr})$-coLipschitz. In particular, this is the case for the subset  $\supp(\mu_f)$ of $X$. Therefore \[\|\widehat{f_1}(\mu_f)\|_{\F(X_1)} \geq \left(1-\frac{1}{Nr}\right)\|\mu_f\|_{\F(X)}.\] Also, we clearly have that $\widehat{f_1}(\mu_1) = 0 \in \F(X_1)$, and \eqref{Eq.8oct.26.1}  implies that $\widehat{f_1}(\mu_2) \in \F(F_1)$, where \[F_1=\{x\in X_1: d_1(x,0) \geq N-\tfrac{1}{N}\} \cup\{0\}.\]
		
		Now let $X_{12}$ denote the quotient space $X_1/F_1$ and $d_{12}$ the quotient metric. Let $f_2\colon X_1 \to X_{12}$ denote the quotient map and $f_{12}\colon X \to X_{12}$ the composition $f_{12} := f_2\circ f_1$. We clearly have that $\widehat{f_{12}}(\mu_2) = 0 \in \F(X_{12})$ and that $f_{2}$ restricted to \[\left\{f_1(x)\in X_1:  x \in B_{\tfrac{1}{2}(N-\tfrac{1}{N})}(0)\right\}\] is an isometric embedding into $X_{12}$. In particular, since $\mu_f \in \F(B_R(0))$, since $f_1$ is 1-Lipschitz, and since we assumed that $R \leq \tfrac{1}{2}(N-\tfrac{1}{N})$, the last sentence implies that $\|\widehat{f_{12}}(\mu_f)\|_{\F(X_{12})} = \|\widehat{f_{1}}(\mu_f)\|_{\F(X_{1})}$. Putting this all together, we get that
		\begin{align*}
			\|\mu_f - \mu_1 - \mu_2\|_{\F(X)} &\geq \|\widehat{f_1}(\mu_f - \mu_1 - \mu_2)\|_{\F(X_1)} \\
			&= \|\widehat{f_1}(\mu_f - \mu_2)\|_{\F(X_1)} \\
			&\geq \|\widehat{f_2}\widehat{f_1}(\mu_f - \mu_2)\|_{\F(X_{12})} \\
			&= \|\widehat{f_{12}}(\mu_f)\|_{\F(X_{12})} \\
			&= \|\widehat{f_{1}}(\mu_f)\|_{\F(X_{1})} \\
			&\geq \left(1-\frac{1}{Nr}\right)\|\mu_f\|_{\F(X)}.
		\end{align*}
		Combining this with the initial bounds $\|\mu-\mu_f\|_{\F(X)}<\eps$ and $\|\mu-\mu_1-\mu_2\|_{\F(X)} \leq \frac{1}{N}$, we get that
		\begin{equation*}
			\tfrac{1}{N}+\eps \geq \left(1-\frac{1}{Nr}\right)(\|\mu\|_{\F(X)}-\eps).
		\end{equation*}
		Taking $N\to\infty$ yields $\eps \geq \|\mu\|_{\F(X)}-\eps$, and then $\eps\to 0$ results in $\mu=0$.
	\end{proof}

	\begin{definition}
		Given a measure $\mu$, we say that a sequence of functions $(\phi_n)_n$ in $L_1(\mu)$ converges {\it locally in measure} to $\phi \in L_1(\mu)$ if $(\phi_n\one_A)_n$ converges in measure to $\phi\one_A$ for every $\mu$-measurable set $A$ with $\mu(A)<\infty$.   
	\end{definition}
	
	We can now apply Proposition \ref{prop:vanishingsupport} to obtain the main result of this section. The statement and its proof use Godard's identification of $\F(T)$ with $L_1(T,\lambda_T)$ described in \S\ref{ss:trees}.
	
	\begin{theorem} \label{thm:kernel}
		Let $(T,d_T,0)$ be a proper $\R$-tree with locally finite $\lambda_T$-measure and $(X,d_X,0)$ a proper pointed metric space. Let $\gamma: T \to X$ be a base-point-preserving, proper Lipschitz map and $\widehat{\gamma}: L_1(T,\lambda_T) = \F(T) \to \F(X)$ its linearization. Then $\ker(\widehat{\gamma})$ is closed with respect to the topology of local convergence in measure.
	\end{theorem}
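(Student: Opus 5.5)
The plan is to verify the hypotheses of Proposition~\ref{prop:vanishingsupport} for $\mu := \widehat{\gamma}(\phi)$. Let $(\phi_n)_n \sbs \ker(\widehat\gamma)$ converge locally in measure to $\phi\in L_1(T,\lambda_T)$. We must show $\widehat\gamma(\phi)=0$. Set $L:=\max(\Lip(\gamma),1)$ and fix $N<\infty$. The idea is to write
\[\widehat\gamma(\phi)=\widehat\gamma(\phi-\phi_n)\]
and to split $\phi-\phi_n$ into three pieces: a piece that is small in norm, a piece carried by geodesics of small total length, and a piece living far from the base point.

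\emph{Step 1 (the far piece).} Since $\gamma$ is proper, $\gamma^{-1}(B_N(0))$ is compact, hence contained in some closed ball $B_R(0)$ of $T$. Put $K:=B_R(0)$, which is compact, so $\lambda_T(K)<\infty$. Any geodesic $[x,y]$ contained in the closed set $\overline{T\setminus K}$ is mapped by $\gamma$ into $(X\setminus B_N(0))\cup\{0\}$; the points on the topological boundary of $K$ can be handled by enlarging $R$ slightly. Hence
\[\widehat\gamma(\one_{[x,y]})=\delta_{\gamma(y)}-\delta_{\gamma(x)}\in\F((X\setminus B_N(0))\cup\{0\}).\]
By Proposition~\ref{prop:stepapproximation}, applied inside the subtree components of $\overline{T\setminus K}$, such indicators span a dense subspace of $L_1(T\setminus K)$. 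Since $\F((X\setminus B_N(0))\cup\{0\})$ is a closed subspace, it follows that $\mu_2:=\widehat\gamma((\phi-\phi_n)\one_{T\setminus K})$ lies in $\F((X\setminus B_N(0))\cup\{0\})$ for every $n$.

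\emph{Step 2 (Egorov on $K$).} Since $\phi_n\one_K\to\phi\one_K$ in measure and $\lambda_T(K)<\infty$, we may pass to an a.e.\ convergent subsequence and apply Egorov's theorem. This yields a measurable $E\sbs K$ with $\lambda_T(E)<\frac{1}{2LN}$ such that $\phi_n\to\phi$ uniformly on $K\setminus E$. Now choose $n$ large enough that
\[\|(\phi-\phi_n)\one_{K\setminus E}\|_{L_1}\le\lambda_T(K)\,\sup_{K\setminus E}|\phi-\phi_n|<\frac{1}{3LN}.\]

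\emph{Step 3 (the short piece, the main obstacle).} The function $g:=(\phi-\phi_n)\one_E$ need not be small in norm, since $\phi_n$ may be large on $E$. It must instead be realized, up to a small error, as a combination of $\one_{[y_i,z_i]}$ with $\sum_i d_T(y_i,z_i)$ small. Having fixed $n$, absolute continuity of the integral gives $\eta>0$, which we also take with $\eta<\frac{1}{2LN}$, such that $\|g\one_S\|_{L_1}<\frac{1}{6LN}$ whenever $\lambda_T(S)<\eta$. By the first item of Proposition~\ref{prop:stepapproximation}, choose finitely many pairwise disjoint geodesics with union $U$ such that $\lambda_T(E\Delta U)<\eta$. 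Then $\|g\one_{E\setminus U}\|<\frac{1}{6LN}$. On each geodesic of $U$, Lemma~\ref{lem:1} identifies $L_1$ of that geodesic with $L_1$ of an interval. Approximating $g\one_{E\cap U}$ there by step functions gives pairwise disjoint sub-geodesics $[y_i,z_i]\sbs U$ and scalars $c_i$ with
\[\Bigl\|g\one_{E\cap U}-\sum_i c_i\one_{[y_i,z_i]}\Bigr\|<\frac{1}{6LN}.\]
Set $\mu_1:=\sum_i c_i(\delta_{\gamma(z_i)}-\delta_{\gamma(y_i)})$. Then
\[\sum_i d_X(\gamma(y_i),\gamma(z_i))\le L\,\lambda_T(U)\le L(\lambda_T(E)+\eta)\le\frac1N.\]

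\emph{Conclusion.} Since $\phi_n \in \ker(\widehat\gamma)$ and $\|\widehat\gamma\|\le L$,
\[\|\mu-\mu_1-\mu_2\| \le L\Bigl(\|(\phi-\phi_n)\one_{K\setminus E}\|+\|g\one_{E\setminus U}\|+\Bigl\|g\one_{E\cap U}-\sum_i c_i\one_{[y_i,z_i]}\Bigr\|\Bigr)\le\frac1N.\]
As $N$ was arbitrary, Proposition~\ref{prop:vanishingsupport} gives $\widehat\gamma(\phi)=0$, i.e.\ $\phi\in\ker(\widehat\gamma)$.

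The delicate point is the order of choices: first $N$, then $K$ and $E$, then $n$, and only then $\eta$ and $U$. This order is what lets a piece of uncontrolled norm be absorbed into the short-length term $\mu_1$.
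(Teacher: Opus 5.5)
Your proof is correct and follows essentially the same route as the paper. The paper likewise writes $\mu$ as $\widehat\gamma$ of a difference lying in the kernel, picks $R$ by properness, uses Egoroff on $B_R(0)$ to isolate a set of small measure, approximates the piece on that set by step functions on geodesics of small total length, and then invokes Proposition~\ref{prop:vanishingsupport}. Your version adds detail in two places the paper treats quickly: choosing $\eta$ by absolute continuity after $n$ is fixed, so that the step-function approximation of the short piece has small error, and checking that the far piece really lies in $\F((X\setminus B_N(0))\cup\{0\})$.
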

	
	\begin{proof}
		By rescaling $d_X$, observing that every hypothesis of the theorem is preserved under this rescaling, and observing that $\ker(\widehat{\gamma})$ is invariant under this rescaling, we may assume that $\Lip(\gamma) \leq 1$. It is enough to show that for all $\mu\in\F(X)$ and all sequences $(\phi_n)_n$ in $L_1(T,\lambda_T)$, if $(\phi_n)_n$ converges locally in measure to 0 and $\widehat{\gamma}(\phi_n) = \mu$ for all $n$, then $\mu=0$. Let $\mu\in \F(X)$ and let $(\phi_n)_n$ be such a sequence in $L_1(T,\lambda_T)$. We will use Proposition~\ref{prop:vanishingsupport} to deduce that $\mu=0$.
		
		Let $N<\infty$ be arbitrary. Choose a radius $R<\infty$ such that
		\begin{equation} \label{eq:farsupport}
			\gamma(T\setminus B_R(0)) \sbs X\setminus B_N(0)
		\end{equation}
		Such an $R<\infty$ exists since $\gamma$ is a proper map and $X$ is a proper space. Since $T$ is proper with locally finite $\lambda_T$-measure, we have that $\lambda_T(B_R(0)) < \infty$. Thus, since $(\phi_n)_n$ converges to 0 locally in measure, $(\phi_n\one_{B_R(0)})_n$ converges to 0 in measure. Egoroff's theorem then implies that we can find a measurable subset $A\sbs B_R(0)$ and an index $n$ such that
		\begin{equation} \label{eq:smallsupport1}
			\lambda_T(A) < \tfrac{1}{2N}
		\end{equation}
		and
		\begin{equation} \label{eq:Egoroff}
			\|\phi_n\one_{B_R(0)\setminus A}\|_{L_1(T,\lambda_T)} < \tfrac{1}{2N}.
		\end{equation}
		
		Applying Proposition~\ref{prop:stepapproximation} separately to $\phi_n$ and to $\one_A$, and using the fact that intersection of geodesics in $\R$-trees are geodesic (due to uniqueness of geodesics), we get that there exist finitely many scalars $\{c_i\}_i \sbs \R$ and pairwise disjoint geodesics $\{[x_i,y_i]\}_i \sbs T$ such that
		\begin{equation} \label{eq:stepapprox}
			\left\|\phi_n\one_A -  \sum_i c_i \one_{[x_i,y_i]}\right\|_{L_1(T,\lambda_T)} < \tfrac{1}{2N} 
		\end{equation}
		and \[\sum_i d_T(x_i,y_i) = \lambda_T\left(\bigcup_i [x_i,y_i]\right) < \lambda_T(A) + \tfrac{1}{2N} \overset{\eqref{eq:smallsupport1}}{<} \tfrac{1}{N}.\] Note that, since $\gamma$ is 1-Lipschitz, this implies
		\begin{equation} \label{eq:smallsupport2}
			\sum_i d_X(\gamma(x_i),\gamma(y_i)) < \tfrac{1}{N}.
		\end{equation}
		
		Set
		\begin{equation} \label{eq:mu1}
			\mu_1 := \widehat{\gamma}\left( \sum_i c_i \one_{[x_i,y_i]}\right) = \sum_i c_i(\delta_{\gamma(y_i)}-\delta_{\gamma(x_i)})
		\end{equation}
		and
		\begin{equation} \label{eq:mu2}
			\mu_2 := \widehat{\gamma}(\phi_n\one_{T\setminus B_R(0)}) \overset{\eqref{eq:farsupport}}{\in} \F((X\setminus B_N(0))\cup\{0\}).
		\end{equation}
		Then we have
		\begin{align} \label{eq:approx}
			\nonumber    \|\mu - \mu_1 - \mu_2\|_{\F(X)} &= \left\|\widehat{\gamma}\left(\phi_n -  \sum_i c_i \one_{[x_i,y_i]} - \phi_n\one_{T\setminus B_R(0)}\right)\right\|_{\F(X)} \\
			\nonumber    &\leq \left\|\phi_n -  \sum_i c_i \one_{[x_i,y_i]} - \phi_n\one_{T\setminus B_R(0)}\right\|_{L_1(T,\lambda_T)} \\
			\nonumber    &\leq \left\|\phi_n\one_{B_R(0)\setminus A}  \right\|_{L_1(T,\lambda_T)} + \left\|\phi_n\one_{A} -  \sum_i c_i \one_{[x_i,y_i]}\right\|_{L_1(T,\lambda_T)} \\
			&\overset{\eqref{eq:Egoroff},\eqref{eq:stepapprox}}{<} \tfrac{1}{2N} + \tfrac{1}{2N} = \tfrac{1}{N}.
		\end{align}
		Since $N<\infty$ was arbitrary, it follows from \eqref{eq:smallsupport2} and \eqref{eq:mu1}, from \eqref{eq:mu2}, and from \eqref{eq:approx}, that the hypotheses of Proposition~\ref{prop:vanishingsupport} are satisfied. Therefore, by that proposition, $\mu=0$.
	\end{proof}
	
	\subsection{Main Results}
	
	First, we need to recall classical results due to   Godefroy.
	\begin{theorem}[{\cite[Théorème~3]{Godefroy}}] \label{thm:L-embedded}
		Suppose $\mu$ is a finite measure and $Z$ a closed subspace of $L_1(\mu)$ such that $Z$ is closed with respect to the topology of convergence in measure. Then $L_1(\mu)/Z$ is $L$-embedded in its bidual.
	\end{theorem}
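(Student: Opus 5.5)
The plan is to deduce the $L$-embedding of the quotient from the Yosida--Hewitt $L$-decomposition of $L_1(\mu)^{**}$, by showing that this decomposition passes to the quotient. Write $L_1(\mu)^{**}=L_1(\mu)\oplus_1 L_1(\mu)_s$, where $L_1(\mu)_s\sbs L_\infty(\mu)^*$ consists of the purely finitely additive functionals, and let $P$ denote the corresponding $L$-projection onto $L_1(\mu)$. Set $Q:=L_1(\mu)/Z$. Then $Q^{**}=L_1(\mu)^{**}/Z^{\perp\perp}$, where $Z^{\perp\perp}$ is the weak$^*$ closure of $Z$. The central claim is
\[
P(Z^{\perp\perp})\sbs Z .
\]

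Granting the claim, the argument is formal. Since $P(Z^{\perp\perp})\sbs Z\sbs Z^{\perp\perp}$, we also get $(I-P)(Z^{\perp\perp})\sbs Z^{\perp\perp}$. Hence $\tilde P[u]:=[Pu]$ is a well-defined projection on $Q^{**}$. Its range is $L_1(\mu)/(Z^{\perp\perp}\cap L_1(\mu))=L_1(\mu)/Z=Q$, because $Z$ is norm closed and therefore weakly closed. For the $L$-identity, the inequality $\|[u]\|\le\|[Pu]\|+\|[u-Pu]\|$ is trivial. For the reverse inequality, take $w\in Z^{\perp\perp}$ with $\|u-w\|$ close to $\|[u]\|$. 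Then
\[
\|u-w\|=\|Pu-Pw\|+\|(I-P)u-(I-P)w\|\ge \|[Pu]\|+\|[u-Pu]\|,
\]
since both $Pw$ and $(I-P)w$ lie in $Z^{\perp\perp}$.

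To prove the claim, let $w=f+s\in Z^{\perp\perp}$ with $f=Pw\in L_1(\mu)$ and $s$ singular. By the Goldstine theorem applied to $Z$ there is a bounded net $(z_\alpha)$ in $Z$, say $\|z_\alpha\|\le C$, converging weak$^*$ to $w$. The key input, and what I expect to be the main obstacle, is the following description of singular functionals on a finite measure space: for every $n$ there is a measurable set $A_n$ with $\mu(A_n)<1/n$ such that $s(\one_{A_n^c}h)=0$ for all $h\in L_\infty(\mu)$. I would derive this from the Yosida--Hewitt characterization of purely finitely additive measures, namely that they are concentrated on sets of arbitrarily small measure.

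Multiplication by $\one_{A_n^c}$ is weak$^*$-continuous on $L_1(\mu)^{**}$, since it is the adjoint of multiplication on $L_\infty(\mu)$. Therefore $z_\alpha\one_{A_n^c}\to w\one_{A_n^c}=f\one_{A_n^c}$ weak$^*$. Because the limit lies in $L_1(\mu)$, this is weak convergence in $L_1(\mu)$. By Mazur's lemma, some convex combination $g_n\in Z$ of the $z_\alpha$ satisfies $\|g_n\|\le C$ and
\[
\|g_n\one_{A_n^c}-f\one_{A_n^c}\|_{L_1}<1/n .
\]
Since $\mu(A_n)\to 0$, it follows that $g_n\to f$ in measure. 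Because $Z$ is closed for convergence in measure, $f\in Z$, which proves the claim.
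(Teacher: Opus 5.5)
Your argument is correct. The paper itself gives no proof of this statement: it quotes it from Godefroy \cite{Godefroy} and uses it as a black box, so there is no argument in the text to compare with. What you wrote is, in substance, the standard proof of Godefroy's theorem (compare the treatment of nicely placed subspaces in \cite[Section~IV.3]{HWW}). It has two parts. First, you show that the Yosida--Hewitt $L$-projection satisfies $P(Z^{\perp\perp})\sbs Z$, using that singular functionals are carried by sets of arbitrarily small measure together with Mazur's lemma. Second, you observe that any $L$-projection with this invariance descends to an $L$-projection of $L_1(\mu)^{**}/Z^{\perp\perp}$ onto the canonical copy of $L_1(\mu)/Z$. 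I checked each step and they all hold, including the identification of the range via $Z^{\perp\perp}\cap L_1(\mu)=Z$ and the reverse inequality via $Pw,(I-P)w\in Z^{\perp\perp}$.

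The step you flagged as the main obstacle is classical (Yosida--Hewitt), and it is in fact elementary. Since $|s|\wedge\mu\le\mu$ is countably additive and $|s|$ is purely finitely additive, we get $|s|\wedge\mu=0$. The lattice formula $(|s|\wedge\mu)(\Omega)=\inf_B\bigl(\mu(B)+|s|(\Omega\setminus B)\bigr)$ then gives, for each $k$, a set $B_k$ with $\mu(B_k)<2^{-k}\eps$ and $|s|(\Omega\setminus B_k)<2^{-k}$. The set $A=\bigcup_k B_k$ satisfies $\mu(A)<\eps$ and $|s|(\Omega\setminus A)=0$.

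Under the hypothesis as stated, the bound $\|g_n\|\le C$ is never used. Convergence of $g_n$ to $f$ in measure already follows from $\mu(A_n)\to0$ and the $L_1$-estimate off $A_n$. Keeping the Goldstine bound $C=\|w\|$ is still worthwhile: the same argument then proves the stronger nicely placed version mentioned in the Remark after Theorem~\ref{thm:main}. In that case $g_n\in\|w\|B_Z$, so only closedness of $B_Z$ in measure is needed. Finally, your inclusion $P(Z^{\perp\perp})\sbs Z$ shows in passing that $Z$ itself is $L$-embedded in its bidual.
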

	
	The following is an immediate corollary. We include its simple proof. 
	
	\begin{corollary} \label{cor:localconvergence}
		Suppose $\mu$ is a $\sigma$-finite measure on a measurable space $(\Omega,\Sigma)$ and $Z$ a closed subspace of  $L_1(\mu)$ such that $Z$ is closed with respect to the topology of local convergence in measure.  Then $L_1(\mu)/Z$ is $L$-embedded in its bidual.
	\end{corollary}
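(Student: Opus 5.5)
The plan is to reduce to Theorem~\ref{thm:L-embedded} by a change of density that turns $\mu$ into a probability measure while keeping $L_1$ isometric. Since $\mu$ is $\sigma$-finite, we may assume $\mu\neq 0$ and write $\Omega=\bigcup_k\Omega_k$ with $\Omega_k\in\Sigma$ pairwise disjoint and $0<\mu(\Omega_k)<\infty$. Then set
\[w:=\sum_k 2^{-k}\mu(\Omega_k)^{-1}\one_{\Omega_k}\]
and $\nu:=w\,\mu$. Here $w>0$ everywhere and $\nu$ is a finite measure. The map
\[U\colon L_1(\mu)\to L_1(\nu),\qquad U\phi:=\phi/w,\]
is a surjective linear isometry. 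Hence $Z':=U(Z)$ is a closed subspace of $L_1(\nu)$, and $L_1(\mu)/Z$ is isometric to $L_1(\nu)/Z'$.

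Moreover, the image under $U$ of an $L$-summand of the bidual stays an $L$-summand. Indeed, $U^{**}$ identifies $(L_1(\mu)/Z)^{**}$ with $(L_1(\nu)/Z')^{**}$ compatibly with the canonical embeddings, so $L$-embeddedness transfers. By Theorem~\ref{thm:L-embedded}, it therefore suffices to show that $Z'$ is closed for convergence in measure with respect to $\nu$.

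Since $\nu$ is finite, convergence in $\nu$-measure is metrizable, for instance by $\int\min(1,|f-g|)\,d\nu$. So sequential closedness suffices. Take $\psi_n\in Z'$ and $\psi\in L_1(\nu)$ with $\psi_n\to\psi$ in $\nu$-measure. We must check that $\phi_n:=w\psi_n\in Z$ converges locally in $\mu$-measure to $\phi:=w\psi\in L_1(\mu)$.

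Fix $A\in\Sigma$ with $\mu(A)<\infty$ and $\eta>0$. Choose $K$ with $\mu(A\cap\bigcup_{k>K}\Omega_k)<\eta$. On $A':=A\cap\bigcup_{k\le K}\Omega_k$, the weight $w$ is bounded above by some $c_K$ and below by some $c'_K>0$. So on $A'$ the measures $\mu$ and $\nu$ are mutually bounded multiples of each other. It follows that
\[\mu\bigl(A'\cap\{|\phi_n-\phi|>\eps\}\bigr)\le (c'_K)^{-1}\,\nu\bigl(\{|\psi_n-\psi|>\eps/c_K\}\bigr)\to 0.\]
Hence $\limsup_n \mu(A\cap\{|\phi_n-\phi|>\eps\})\le\eta$ for every $\eta>0$, so $\phi_n\one_A\to\phi\one_A$ in $\mu$-measure. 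By the hypothesis on $Z$ we get $\phi\in Z$, and therefore $\psi=U\phi\in Z'$.

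There is no serious obstacle. The only points needing care are to check that the local-in-measure hypothesis (which is stated for sequences) matches the metrizable topology of convergence in $\nu$-measure, and the routine verification that a surjective isometry preserves $L$-embeddedness in the bidual.
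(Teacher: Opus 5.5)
Your proposal is correct and follows essentially the same route as the paper. You reweight $\mu$ by a strictly positive density that is constant on the pieces of a partition into finite-measure sets, transfer $Z$ to $L_1(\nu)$ via the isometry $\phi\mapsto\phi/w$, and check closedness in $\nu$-measure along sequences by discarding the tail pieces and using two-sided bounds on $w$ over the rest. Your extra points (metrizability of convergence in $\nu$-measure, and arranging $\mu(\Omega_k)>0$, which the paper sidesteps by taking any $b_n>0$ with $\sum_n b_n\mu(B_n)<\infty$) are harmless refinements.
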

	
	\begin{proof} There is a increasing sequence $(A_n)_{n=1}^\infty$ in $\Sigma$ such that for all $n \in \N$, $\mu(A_n)<\infty$ and $\Omega=\bigcup_{n=1}^\infty A_n$. Denote $A_0=\emptyset$ and $B_n=A_n \setminus A_{n-1}$ for $n \in \N$. Fix a sequence $(b_n)_{n=1}^\infty$ in $(0,\infty)$ such that $\sum_{n=1}^\infty b_n\mu(B_n)<\infty$ and let $f=\sum_{n=1}^\infty b_n \one_{B_n}$. Then $f \in L_1(\mu)$ with $f>0$. The measure $\nu:=f\mu$ is finite and the map $U: g \mapsto g/f$ is a linear isometry from $L_1(\mu)$ onto $L_1(\nu)$. Let $Y=U(Z)$. Since $U$ is an isometry, by Theorem \ref{thm:L-embedded}, it is enough to show that $Y$ is closed in $L_1(\nu)$ for the topology of convergence in measure. So let $(h_n)_n$ be a sequence in $Y$ converging in measure to $h\in L_1(\nu)$. Set $g_n=h_nf \in Z$ and $g=hf \in L_1(\mu)$. We need to show that $g\in Z$, so it is enough to show that $(g_n)_n$ converges locally in measure to $g$. We fix $A \in \Sigma$ such that $\mu(A)<\infty$ and $\eps>0$. Then, for any $K \in \N$,
		\[
		\mu(\{|g_n\one_A -g\one_A|>\eps\}) \le \sum_{k=1}^K \mu(\{b_k|h_n-h|>\eps\})+\sum_{k>K}\mu(A \cap B_k).
		\]
		For a given $\eta>0$, there exists $K\in \N$ such that $\sum_{k>K}\mu(A \cap B_k)<\eta$, because $\mu(A)<\infty$. Then, since  $(h_n)_n$ converges to $h$ in measure, there exists $N\in \N$ such that for all $n\ge N$, $\sum_{k=1}^K \mu(\{b_k|h_n-h|>\eps\}) <\eta$. This shows that $(g_n)_n$ converges locally in measure to $g$ and concludes the proof. 
	\end{proof}
	
	\begin{theorem}[{\cite[Lemme~4]{Godefroy}}]
		Let $Y$ be a Banach space. If $Y$ is $L$-embedded in its bidual, then $Y$ is weakly sequentially complete.  
	\end{theorem}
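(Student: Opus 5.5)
We would prove this by showing that the weak$^*$ limit of any weakly Cauchy sequence has no component in the complementary $L$-summand. Let $P\colon Y^{**}\to Y$ be an $L$-projection and set $Y_s:=\ker P$, so that $Y^{**}=Y\oplus_1 Y_s$. Let $(y_n)_n$ be weakly Cauchy in $Y$ and let $y^{**}$ be its weak$^*$ limit in $Y^{**}$. Write $y^{**}=Py^{**}+z$ with $z\in Y_s$. Replacing $y_n$ by $y_n-Py^{**}$, we may assume that $y_n\to z$ weak$^*$ with $z\in Y_s$. It then suffices to show $z=0$, since $y_n\to y^{**}\in Y$ weakly means $Y$ is weakly sequentially complete.

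The first step is to collect the consequences of the $L$-decomposition. For every $y\in Y$ and $t\in\R$ we have $\|y+tz\|=\|y\|+|t|\,\|z\|$. Dually, $Y^{***}=Y^*\oplus_\infty Y^\perp$, where $Y^\perp$ is identified with $Y_s^*$. In concrete terms, for every $y\in Y$ and $\eps>0$ there is $f\in B_{Y^*}$ with $f(y)\ge\|y\|-\eps$ and $z(f)\ge \|z\|-\eps$, and also one with $z(f)\le-\|z\|+\eps$. Consequently $\sup_{f\in K} z(f)=\|z\|$ and $\inf_{f\in K}z(f)=-\|z\|$ for many weak$^*$-closed subsets $K\subseteq B_{Y^*}$ on which elements of $Y$ are nearly normed. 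In particular this holds on every relatively weak$^*$-open piece of $B_{Y^*}$ obtained by localizing near a functional that norms some $y\in Y$.

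The second step uses the sequential nature of $z$. Since $z(f)=\lim_n f(y_n)$ for all $f\in B_{Y^*}$, the restriction $z|_{B_{Y^*}}$ is a pointwise limit of weak$^*$-continuous functions on the compact space $(B_{Y^*},w^*)$, hence of Baire class one. By Baire's characterization, its restriction to any nonempty weak$^*$-closed subset $K$ has a point of continuity relative to $K$. We would apply this to a suitable $K$, for instance $K=B_{Y^*}$ itself or a weak$^*$-closed face $\{f\in B_{Y^*}: f(y)=\|y\|\}$-type set. On a relatively weak$^*$-open neighbourhood $V$ of such a continuity point $f_0$, the oscillation of $z$ is less than $\eps$. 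This contradicts the first step, which says $z$ takes values close to both $\|z\|$ and $-\|z\|$ on every such neighbourhood, unless $\|z\|<\eps$. Letting $\eps\to0$ gives $z=0$.

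The main obstacle is making the first step local enough. We must show that the oscillation $2\|z\|$ persists on \emph{every} relatively weak$^*$-open subset of the chosen $K$, and not merely on the whole ball. Our first attempt is to take $K=B_{Y^*}$ and a basic neighbourhood $V=\{f: |f(x_j)-f_0(x_j)|<\eta,\ j\le k\}$. We would then use the identity $\|y+tz\|=\|y\|+|t|\|z\|$ with $y$ a combination of the $x_j$ and suitable scalars $t$ to produce, via Hahn--Banach in $Y^{**}$ and Goldstine, functionals in $V$ at which $z$ is close to $\pm\|z\|$. If this perturbation argument fails to stay inside $V$, we would instead pass to convex combinations: $f_0\pm$ a small multiple of a norming functional for $z$ vanishing on $Y$, approximated weak$^*$ by elements of $Y^*$ using the $\oplus_\infty$ structure of $Y^{***}$.
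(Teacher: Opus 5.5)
The paper does not prove this statement (it is quoted from Godefroy), so I am judging your plan on its own merits. The architecture is sound. $z|_{B_{Y^*}}$ is the pointwise limit of the weak$^*$-continuous maps $f\mapsto f(y_n)$, so on the compact, hence Baire, space $(B_{Y^*},w^*)$ it has a point of continuity $f_0$; this direction of Baire's theorem needs no metrizability. It then remains to show that $z$ oscillates by $2\|z\|$ on every weak$^*$-neighbourhood $V=\{f\in B_{Y^*}:|f(x_j)-f_0(x_j)|<\eta,\ j\le k\}$.

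That local statement is the entire content of the proof, and you do not establish it. Your first step only controls $z$ on slices $\{f\in B_{Y^*}: f(y)>\|y\|-\eps\}$, and $V$ need not contain any slice. For instance, in $\ell_1$ the neighbourhood $\{f:|f(e_1)|<\frac12\}$ contains none, so the sentence beginning ``In particular'' does not follow. Your first attempt fails for the same reason: norming $y+tz$ with $y\in\spn\{x_j\}$ produces $g$ with $g(y)\approx\|y\|$. That says nothing about $g(x_j)-f_0(x_j)$, and $f_0$ need not norm anything. Your fallback also fails if $f_0$ means its canonical image $\iota f_0\in Y^{***}$. Since $\iota f_0$ acts on $Y_s$ by $s\mapsto s(f_0)$, the norm $\|\iota f_0+t\phi\|$ (with $\phi\in Y^\perp$ norming $z$) can equal $1+t$. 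Normalizing or taking convex combinations then forces $t\to 0$ as $V$ shrinks. The resulting lower bound $2t\|z\|$ contradicts nothing at a continuity point of norm one, and the Baire argument cannot rule such a point out: in infinite dimensions $\{\|f\|<1\}$ is weak$^*$-meager in $B_{Y^*}$.

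The missing idea is this: in your decomposition $Y^{***}=Y^*\oplus_\infty Y^\perp$, the first summand is $P^*(Y^*)=Y_s^\perp$, not the canonical copy of $Y^*$. You should replace $\iota f_0$ by $P^*f_0=f_0\circ P$. Take $\phi_s\in Y_s^*$ with $\|\phi_s\|=1$ and $\phi_s(z)=\|z\|$, and put $\psi_\pm(y+s):=f_0(y)\pm\phi_s(s)$ for $y\in Y$, $s\in Y_s$. Then $|\psi_\pm(y+s)|\le\|y\|+\|s\|=\|y+s\|$, so $\|\psi_\pm\|\le 1$, while $\psi_\pm|_Y=f_0$ and $\psi_\pm(z)=\pm\|z\|$. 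By Goldstine, $B_{Y^*}$ is $\sigma(Y^{***},Y^{**})$-dense in $B_{Y^{***}}$. Testing against $x_1,\dots,x_k,z\in Y^{**}$ therefore yields $g_\pm\in V$ with $\pm z(g_\pm)>\|z\|-\eps$. Thus $z|_{B_{Y^*}}$ has oscillation $2\|z\|$ at every point, so a continuity point exists only if $z=0$, which finishes your argument. With this step, no small multiples, convex combinations or special choice of $K$ are needed; take $K=B_{Y^*}$. The face you mention would only cause trouble, since Goldstine approximants need not lie in it.
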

	
	Our main theorem applies Godefroy's results to our constructions of the preceding subsections.
	
	\begin{theorem}[Main Theorem] \label{thm:main}
		Let $X$ be a proper, geodesic metric space with locally finite $\H^1$-measure. Then $\F(X)$ is $L$-embedded in its bidual, and hence weakly sequentially complete.
	\end{theorem}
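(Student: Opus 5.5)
The plan is to realize $\F(X)$ as a quotient $L_1(T,\lambda_T)/Z$ with $Z$ closed under local convergence in measure, and then apply Corollary~\ref{cor:localconvergence}. Fix a base point $0\in X$. If $X$ is a single point, then $\F(X)=\{0\}$ and there is nothing to prove, so assume otherwise. Apply the Tree Parametrization (Theorem~\ref{thm:isometric}). It gives a pointed proper $\R$-tree $T$ with locally finite $\lambda_T$-measure and a proper, base-point-preserving Lipschitz map $\gamma\colon T\to X$. Its pullback $\gamma^*\colon f\mapsto f\circ\gamma$, from $\Lip_0(X)$ to $\Lip_0(T)$, is an isometric embedding.

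Under the identifications $\Lip_0(X)=\F(X)^*$ and $\Lip_0(T)=\F(T)^*$, the map $\gamma^*$ is exactly the adjoint $(\widehat\gamma)^*$ of the linearization $\widehat\gamma\colon\F(T)\to\F(X)$. Indeed, $\langle (\widehat\gamma)^*f,\delta_t\rangle=f(\gamma(t))$, and both maps are weak$^*$ continuous. A bounded operator whose adjoint is an isometric embedding is a quotient map: it maps the open unit ball onto the open unit ball. This is the standard duality, proved by Hahn--Banach applied to the closure of $\widehat\gamma(B_{\F(T)})$ followed by the usual open mapping iteration. Hence $\widehat\gamma$ induces an isometric isomorphism
\[
\F(T)/\ker(\widehat\gamma)\cong \F(X).
\]
Using Godard's identification $\F(T)=L_1(T,\lambda_T)$, this reads $\F(X)\cong L_1(T,\lambda_T)/Z$ with $Z:=\ker(\widehat\gamma)$, a closed subspace.

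Next, the hypotheses of Theorem~\ref{thm:kernel} are met: $T$ is a proper $\R$-tree with locally finite $\lambda_T$-measure, $X$ is proper, and $\gamma$ is proper, Lipschitz and base-point-preserving. That theorem therefore says $Z$ is closed for the topology of local convergence in measure.

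To apply Corollary~\ref{cor:localconvergence}, it remains to check that $\lambda_T$ is $\sigma$-finite. Since $T$ is proper, $T=\bigcup_n B_n(0)$, where each closed ball $B_n(0)$ is compact and so has finite $\lambda_T$-measure by local finiteness. Corollary~\ref{cor:localconvergence} then shows that $L_1(T,\lambda_T)/Z$, and hence $\F(X)$, is $L$-embedded in its bidual. $L$-embeddedness in the bidual is preserved under isometric isomorphisms, since they induce isometric isomorphisms of the biduals. Weak sequential completeness then follows from Godefroy's Lemme~4, quoted above.

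The only step requiring care is the duality fact that $\widehat\gamma$ is an exact isometric quotient map and not merely one with dense range. The potential subtlety is whether the image of the closed ball is closed. Open-ball surjectivity is enough for the isometric identification of the quotient with $\F(X)$, because the quotient norm is defined as an infimum. Everything else is direct bookkeeping from the earlier results.
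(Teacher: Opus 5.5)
Your proposal is correct and follows the paper's proof: Tree Parametrization, then the adjoint duality giving $\F(X)\cong L_1(T,\lambda_T)/\ker(\widehat\gamma)$, then Theorem~\ref{thm:kernel}, Corollary~\ref{cor:localconvergence} and Godefroy's Lemme~4. The points you add (the one-point case, $\sigma$-finiteness of $\lambda_T$, and why an operator with isometric adjoint is an exact quotient map) are routine details that the paper leaves implicit.
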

	
	\begin{proof}
		Let $\gamma:T \to X$ satisfy the conclusion of Theorem~\ref{thm:isometric}. This theorem then gives us that the pullback map   $\widehat\gamma^*:f\mapsto f\circ \gamma$ is an isometry from $\Lip_0(X)$ into $\Lip_0(T)$. By taking the adjoint of this map,  it follows that $\F(X)$ is isometric to $L_1(T,\lambda_T)/\ker(\widehat{\gamma})$. By Theorem~\ref{thm:kernel}, the norm-closed subspace $\ker(\widehat{\gamma})$ is closed with respect to the topology of local convergence in measure. Then by Corollary~\ref{cor:localconvergence}, $\F(X) = L_1(T,\lambda_T)/\ker(\widehat{\gamma})$ is $L$-embedded in its bidual. 
	\end{proof}
	
	\begin{remark} Let us mention that Theorem \ref{thm:L-embedded} is valid for a slightly larger class of subspaces of $L_1(\mu)$, called ``bien disposés'' in \cite{Godefroy}, or ``nicely placed'' in English, which are the subspaces of $L_1(\mu)$ such that their unit balls are closed for the topology of convergence in measure. It is indeed a larger class, as $H_1(\mathbb T)$ is dense in $L_1(\mathbb T)$ for the topology of convergence in measure, while its unit ball is closed for the same topology (see Example IV.3.6. in \cite{HWW}). We also refer to \cite[Section VII.5]{BaireGodefroy} for a presentation of this subject. 
	\end{remark}
	
	\begin{remark}
		In fact, thanks to a result by H. Pfitzner \cite{Pfitzner}, we can say more about $\F(X)$. It is proved in \cite{Pfitzner} that if $Y$ is a separable Banach space which is $L$-embedded in its bidual, then $Y$ has property $(X)$ introduced by Godefroy and Talagrand \cite{GodefroyTalagrand}. Property $(X)$ implies property $(V^*)$ of Pełczyński, which in turm implies weak sequential completeness. We shall not define these properties here. Let us mention that the converses of the above implications are false in general. However it is an open question to know whether they are equivalent for Lipschitz-free spaces. We also recall that Godefroy and Talagrand proved in \cite{GodefroyTalagrand} that a Banach space with property $(X)$ is the unique isometric predual of its dual. Since property $(X)$ is invariant under equivalent renormings, it follows that, under the assumptions of Theorem \ref{thm:main}, $\F(X)$ is the unique isometric predual of its dual, for any equivalent norm on $\F(X)$. 
	\end{remark}

	\section{A remark on a result by C\'uth, Kalenda, and Kaplick\'y}
	
	In \cite {CKK2019} C\'uth, Kalenda, and Kaplick\'y proved the following important result.
	
	\begin{theorem}\emph{(\cite[Theorem 1.1]{CKK2019}).} For any $d\in \N$, the space $\cal F(\ell_2^d)$ is $1$-complemented in $\cal F(\ell_2^d)^{**}$. 
	\end{theorem}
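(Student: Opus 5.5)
This is the result of C\'uth, Kalenda, and Kaplick\'y, and the plan follows their strategy: build a norm-one projection from $\F(\ell_2^d)^{**}$ onto $\F(\ell_2^d)$ by averaging translates and dilates, so as to push any bidual element back into the free space.

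\textbf{Step 1: reduce to an operator on $\Lip_0$.} It suffices to construct a weak$^*$-to-weak$^*$ continuous operator $Q\colon \Lip_0(\ell_2^d)\to \Lip_0(\ell_2^d)$ with $\|Q\|\le 1$ and the following properties:
\begin{itemize}
\item its adjoint $Q^*$ maps $\Lip_0^*=\F(\ell_2^d)^{**}$ into $\F(\ell_2^d)$;
\item $Q^*$ restricted to $\F(\ell_2^d)$ is the identity.
\end{itemize}
Then $Q^*$ is the desired norm-one projection.

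\textbf{Step 2: construct $Q$ by convolution.} A first candidate is convolution of $f$ with a smooth mollifier $\rho_\eps$, corrected to vanish at $0$: $f\mapsto f*\rho_\eps-(f*\rho_\eps)(0)$. This is translation-averaging, hence $1$-Lipschitz preserving, and it is weak$^*$ continuous on bounded sets. Its adjoint sends a bidual element $\phi$ to the functional $f\mapsto\langle \phi, f*\rho_\eps\rangle$. Since $f*\rho_\eps$ is $C^\infty$, this functional is determined by the derivatives $\nabla f*\rho_\eps$. Using the identification of $\Lip_0(\R^d)$ with gradient fields in $L_\infty(\R^d;\ell_2^d)$, one shows this functional lies in $\F$. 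Concretely, $\F(\R^d)$ is a quotient of $L_1(\R^d;\ell_2^d)$ via $g\mapsto$ divergence, and $\langle\phi,\nabla f*\rho_\eps\rangle$ is given by integrating $\nabla f$ against an $L_1$ field built from $\phi$ and $\rho_\eps$, at least on compact pieces.

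\textbf{Step 3: pass to the limit $\eps\to0$.} Each mollified adjoint $P_\eps$ acts as the identity on $\F$ only in the limit. So I take a weak$^*$ cluster point $P$ of $(P_\eps)$ in the operator space, for instance along an ultrafilter. To land in $\F$ rather than merely in the bidual, the averaging must be done in a way that keeps $\F$-values. CKK achieve this with a Banach-limit average over dilations and a partition of unity near $0$ and near infinity, ensuring tightness.

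\textbf{Main obstacle.} The hard step is verifying that the limiting operator actually maps into $\F(\ell_2^d)$ rather than $\F^{**}$. This requires uniform control of the $L_1$ representatives, a weak compactness in $L_1$ of the averaged fields. The Hilbertian structure enters through the existence of norm-one projections onto subspaces, namely orthogonal projections onto lines of gradient directions. That is exactly the ingredient the present paper abstracts into the constant $C$.
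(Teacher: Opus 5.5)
Your Step~1 reduction is impossible, not merely hard to realize. For \emph{any} bounded operator $Q$ on $\Lip_0(\ell_2^d)$, the adjoint $Q^*$ is weak$^*$-to-weak$^*$ continuous on $\Lip_0(\ell_2^d)^*=\F(\ell_2^d)^{**}$. If $Q^*$ is the identity on $\F(\ell_2^d)$, which is weak$^*$-dense by Goldstine's theorem, then $Q^*$ is the identity of the whole bidual. It then takes values in $\F(\ell_2^d)$ only if $\F(\ell_2^d)$ is reflexive, which fails since it contains $\F([0,1])\cong L_1[0,1]$. So a projection of the bidual onto $\F(\ell_2^d)$ is never an adjoint. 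It must contain a non-weak$^*$-continuous step that \emph{discards} part of a bidual element, and mollification followed by a cluster point does not do this.

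Already for $d=1$, where $\F(\R)=L_1(\R)$ and $\Lip_0(\R)=L_\infty(\R)$, let $\phi$ be a weak$^*$ cluster point of $n\one_{[0,1/n]}$, i.e.\ of $n(\delta_{1/n}-\delta_0)$. Then $\langle Q_\eps^*\phi,g\rangle=(g*\rho_\eps)(0)$, so $Q_\eps^*\phi$ is the reflected mollifier. Every cluster point as $\eps\to0$ acts as $g\mapsto g(0)$ on continuous $g$, hence is a singular functional outside $L_1(\R)$. The projection recalled in the paper sends this $\phi$ to $0$. Your Step~2 claim that $Q_\eps^*$ maps the bidual into $\F$ also fails: $Q_\eps=S_\eps^*$ for the convolution operator $S_\eps$ on $\F$, so the claim would make $S_\eps$ weakly compact, which translation invariance rules out (mass escapes to infinity). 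Banach limits over dilations do not repair this, and no such averaging occurs in the argument of \cite{CKK2019} as the paper recalls it.

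The missing idea is to work on the measure side, as in Theorems~\ref{thm:comp1} and \ref{thm:comp2} with $C=1$. One identifies $\F(\ell_2^d)^{**}$ with $\M_{ac}(\R^d,\ell_2^d)/X^\perp$ (finitely additive vector measures) and $\F(\ell_2^d)$ with $L_1(\R^d,\ell_2^d)/Y$, where $Y$ consists of the divergence-free fields. The restriction map $R$ (adjoint of $C_0\hookrightarrow\B_b$) is the non-weak$^*$-continuous step. It produces a Radon vector measure and satisfies $R(X^\perp)=Y_\sigma$, but $R\phi$ may be singular with respect to $\lambda$; in the example, $R\phi=\delta_0$.

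The decisive ingredient is then a norm-one projection $P$ of $\M_\sigma(\R^d,\ell_2^d)$ onto $\M_\sigma^{\mathrm{div}}(\ell_2^d)=(\M_\sigma\cap\M_{ac})+Y_\sigma$. It is given by $P(f\nu)=g\nu$ with $g(x)=\pi_{T_\nu(x)}(f(x))$, where:
\begin{itemize}
\item $T_\nu$ is the measurable field of subspaces from Proposition~\ref{prop:CKK7.1}, which characterizes when $f\nu\in\M_\sigma^{\mathrm{div}}$;
\item $\pi_F$ is the orthogonal projection onto $F$;
\item well-definedness uses $T_{\nu_1}=T_{\nu_2}$ $\nu_1$-a.e.\ whenever $\nu_1\ll\nu_2$.
\end{itemize}
In the example, $T_{\delta_0}(0)=\{0\}$, which is why $\phi\mapsto 0$. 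Then $PR$ descends to the quotients, and $J^{-1}$ from Proposition~\ref{prop:CKKresults} brings you back to $L_1/Y=\F(\ell_2^d)$.

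This is where the Hilbert structure enters: orthogonal projections onto subspaces $T_\nu(x)$ of arbitrary dimension (not lines of gradient directions), applied pointwise to densities of vector measures. Nothing in your Steps~2--3 plays this role. Your closing remark about orthogonal projections is not attached to any step of the construction you describe.
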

	
	Moreover, as stated in \cite[Theorem 1.1]{CKK2019},  any $d$-dimensional normed space $E$, $\cal F(E)$ is $d_{BM}(E,\ell_d^2)$-complemented into $\cal F(E)^{**}$, where $d_{BM}(E,\ell_d^2)$ denotes the Banach-Mazur distance from $E$ to $\ell_2^d$ and $d_{BM}(E,\ell_d^2)\le \sqrt{d}$. In this section, we present the following improvement.
	
	\begin{theorem}\label{t:C-complementation} Let $E$ be a finite dimensional normed and assume that for some $C\ge 1$, every finite dimensional subspace of $E$ is $C$-complemented in $E$. Then $\cal F(E)$ is $C$-complemented into $\cal F(E)^{**}$.
	\end{theorem}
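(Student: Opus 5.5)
The plan is to follow the scheme of \cite[Theorem 1.1]{CKK2019}. The Euclidean structure will be replaced by projections of norm at most $C$ onto subspaces of $E$, chosen in a Borel way.

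\textbf{Step 1: represent $\F(E)$ as a quotient of a vector-valued $L_1$ space.} Write $d=\dim E$ and let $\lambda_d$ be Lebesgue measure on $E\cong\R^d$. By Rademacher's theorem, $f\mapsto \nabla f$ is an isometric embedding
$$\Lip_0(E)\hookrightarrow L_\infty(E,\lambda_d;E^*),\qquad \|\nabla f\|_\infty=\Lip(f).$$
It is weak$^*$-to-weak$^*$ continuous on bounded sets. Its preadjoint is therefore a quotient map $Q\colon L_1(E,\lambda_d;E)\to\F(E)$, given on smooth compactly supported fields by a divergence-type formula. In particular $\F(E)$ is isometric to $L_1(E;E)/\ker Q$.

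\textbf{Step 2: define the candidate projection.} Take $\Phi\in\F(E)^{**}=\Lip_0(E)^*$. Following CKK, I would disintegrate $\Phi$, via a Hahn--Banach extension to $L_\infty(E;E^*)^*$, along segments and directions. This gives a normal part, i.e.\ an element of $L_1(E;E)$, together with a singular part. The normal part is not canonical: it depends on the extension, and it is only determined modulo directions in which $\Phi$ does not see the gradient. This is where the hypothesis on $E$ enters.

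\textbf{Step 3: correct the normal part by Borel-selected projections.} At each point $x$ one has a subspace $F(x)\in\mathcal S(E)$ of directions actually charged by $\Phi$ near $x$. I would first show that $x\mapsto F(x)$ is Borel for the Effros--Borel structure. Then I would compose it with the Borel selection $F\mapsto P_F$ of Proposition~\ref{p:Borel0}, where each $P_F$ is a projection of $E$ onto $F$ with $\|P_F\|\le C$. Applying $P_{F(x)}$ pointwise to the normal part produces a measurable field $g_\Phi\in L_1(E;E)$. I would then set $P\Phi:=Q(g_\Phi)$.

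\textbf{Step 4: verify the projection properties.} Three things must be checked.
\begin{itemize}
\item Well-definedness and linearity: the choice of extension changes the field only by something that the pointwise projections send into $\ker Q$.
\item $\|P\Phi\|\le C\|\Phi\|$, which follows from $\|P_{F(x)}\|\le C$ together with $\|Q\|\le 1$.
\item $P\Phi=\Phi$ when $\Phi\in\F(E)$; here the normal part already lies in the charged directions, so the $P_{F(x)}$ act as the identity.
\end{itemize}

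\textbf{Main obstacle.} I expect the hardest step to be the measurability in Step 3. One must produce $x\mapsto F(x)$ as a Borel map into $\mathcal S(E)$ and show that the composition with the Borel selection of Proposition~\ref{p:Borel0} gives a Bochner-measurable field. I would first try to define $F(x)$ as the span of a Borel family of directional derivatives obtained from the disintegration. Borelness would then follow by checking the generating sets $\{F\in\mathcal S(E):F\cap U\neq\emptyset\}$.
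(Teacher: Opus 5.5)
You have correctly isolated the new ingredient: replace the Euclidean orthogonal projections of \cite{CKK2019} by the Borel-selected projections of norm at most $C$ from Proposition~\ref{p:Borel0}. However, Steps~2--4 do not reproduce the mechanism of \cite{CKK2019}, and as written they fail. The reason is that you keep only an $L_1$ ``normal part'' of a Hahn--Banach extension and discard the singular part, yet an extension of an element of $\F(E)$ can have no $L_1$ part at all.

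Take $d\ge 2$, $\Phi=\delta_y-\delta_x$, the Euclidean unit vector $v=(y-x)/|y-x|_2$, a symmetric mollifier $\rho_\eps$ and a free ultrafilter $\mathcal U$. The functional $h\mapsto\lim_{\mathcal U}\int\langle h,v\rangle\,(\rho_\eps*\H^1|_{[x,y]})\,d\lambda$ on $L_\infty(\R^d;E^*)$ has the following properties.
\begin{itemize}
\item Its norm is $\|y-x\|_E=\|\Phi\|$.
\item It sends $\nabla f$ to $\lim_{\mathcal U}\bigl((\rho_\eps*f)(y)-(\rho_\eps*f)(x)\bigr)=f(y)-f(x)$, so it extends $\Phi$.
\item It is carried by the Lebesgue-null segment $[x,y]$, and its restriction to $C_0(\R^d,E^*)$ is the singular measure $v\,\H^1|_{[x,y]}$.
\end{itemize}
So its normal part is $0$ in any reasonable sense, and your recipe gives $P(\delta_y-\delta_x)=Q(0)=0$. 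This contradicts your third bullet.

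The first bullet has nothing behind it either. Two extensions differ by an element of $X(E)^\perp$, whose restriction lies in $Y_\sigma(E)$. An element of $Y_\sigma(E)$ may have an absolutely continuous part that is not divergence-free, its divergence being cancelled by a singular part. Projecting an $L_1(\lambda)$ field is also pointless, since every $g\lambda$ with $g\in L_1$ already lies in $\M_\sigma^{\mathrm{div}}(E)$. The projections are needed precisely on the part singular with respect to $\lambda$, which is the part you throw away.

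What is missing is the structure used in the paper (Theorems~\ref{thm:comp1} and~\ref{thm:comp2}).
\begin{itemize}
\item Apply the restriction $R$ to a Hahn--Banach extension in $L_\infty(\R^d,E^*)^*=\M_{ac}(\R^d,E)$. This gives the whole $\sigma$-additive vector measure $\mu=f\nu$, singular part included.
\item Project pointwise onto the subspaces $T_\nu(x)$ of Proposition~\ref{prop:CKK7.1}, characterized by $f\nu\in\M_\sigma^{\mathrm{div}}(E)\iff f(x)\in T_\nu(x)$ for $\nu$-a.e.\ $x$. This is the correct replacement for your undefined ``directions charged by $\Phi$''. It depends on $\nu$, and $T_\nu=E$ a.e.\ when $\nu\ll\lambda$.
\item Its compatibility under $\nu_1\ll\nu_2$ (\cite[Lemma~7.3]{CKK2019}) is what makes $P\colon\M_\sigma(\R^d,E)\to\M_\sigma^{\mathrm{div}}(E)$ well defined and linear.
\item The result lies in $\M_\sigma^{\mathrm{div}}(E)=(\M_\sigma(\R^d,E)\cap\M_{ac}(\R^d,E))+Y_\sigma(E)$, not in $L_1$, so you cannot simply apply $Q$. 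Independence of the extension comes from $R(X(E)^\perp)=Y_\sigma(E)$. The norm bound comes from the isometry $J\colon L_1(\R^d,E)/Y(E)\to\M_\sigma^{\mathrm{div}}(E)/Y_\sigma(E)$. Both are in Proposition~\ref{prop:CKKresults}.
\end{itemize}
By contrast, the measurability you single out as the main obstacle is routine once $T_\nu$ is available. It is lower $\nu$-measurable, and composing it with the Borel map $\tilde P$ and with evaluation is measurable.
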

	
	Theorem~\ref{t:C-complementation} will follow from Theorems~\ref{thm:comp1} and \ref{thm:comp2} below. As the reader will see,  the only truly new ingredient is that under the assumptions of Theorem \ref{t:C-complementation}, there exists a Borel map assigning to each subspace $F$ of $E$ a projection from $E$ onto $F$ with  norm at most $C$ (see Proposition \ref{p:Borel0}). The other parts of the argument will follow the methods of \cite{CKK2019}. 
	
	We start by   stating a straightforward consequence of this Theorem~\ref{t:C-complementation}. Precisely, an easy consequence of the Hahn--Banach Theorem is that if $E$ is a normed space of dimension 2, then every subspace of $E$ is $1$-complemented in $E$. So we have:
	
	\begin{corollary}\label{CorollaryDim2} For any $2$-dimensional normed space $E$, the space $\cal F(E)$ is $1$-complemented in $\cal F(E)^{**}$.
	\end{corollary}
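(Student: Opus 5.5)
The statement to prove is Corollary~\ref{CorollaryDim2}. The plan is to apply Theorem~\ref{t:C-complementation} with $C=1$. Everything then reduces to the claim mentioned just before the corollary: every subspace $F$ of a two-dimensional normed space $E$ is $1$-complemented in $E$. I will check this case by case on $\dim F\in\{0,1,2\}$.

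The trivial cases are handled first. If $F=\{0\}$, the zero operator is a projection onto $F$ of norm $0\le 1$. If $F=E$, the identity is a projection of norm $1$. Both are norm-one projections onto closed subspaces, which is all that $1$-complementation requires.

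The only substantive case is $\dim F=1$, say $F=\spn\{u\}$ with $\|u\|=1$. By the Hahn--Banach Theorem there is $\varphi\in E^*$ with $\|\varphi\|=1$ and $\varphi(u)=1$. Define $P x:=\varphi(x)u$. Then $P$ is linear with range $F$, and $P^2x=\varphi(x)\varphi(u)u=Px$, so $P$ is a projection onto $F$. Moreover $\|Px\|=|\varphi(x)|\le\|x\|$, so $\|P\|\le 1$.

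Hence every subspace of $E$ is $1$-complemented in $E$, in particular every finite dimensional one. Since $E$ is finite dimensional, the hypotheses of Theorem~\ref{t:C-complementation} hold with $C=1$, and that theorem gives that $\F(E)$ is $1$-complemented in $\F(E)^{**}$. There is no genuine obstacle here, as the whole difficulty sits inside Theorem~\ref{t:C-complementation}. The only point to notice is that dimension $2$ is used exactly so that every proper nontrivial subspace is one-dimensional, where Hahn--Banach yields a norm-one projection. In higher dimensions, $2$-dimensional subspaces need not be $1$-complemented, so this argument does not extend.
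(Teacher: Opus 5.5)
Your proposal is correct and follows the paper's argument. The paper also obtains the corollary by applying Theorem~\ref{t:C-complementation} with $C=1$. It uses Hahn--Banach to give a norm-one projection onto every subspace of a two-dimensional space, and you have filled in exactly that verification, including the trivial subspaces $\{0\}$ and $E$.
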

	
	In what follows, given a finite dimensional normed  space $E$,  the set $\cal S(E)$ of subspaces of $E$ will be equipped with its Effros--Borel structure. Let us now state our selection result.
	
	\begin{proposition}\label{p:Borel0} 
		Let $E$ be a finite dimensional normed space and $C\ge 1$ such that any subspace of $E$ is $C$-complemented in $E$. Then, there exists a Borel map ${\tilde{P}}: \cal S(E) \to B(E)$, such that for all $F\in \cal S(E)$, ${\tilde{P}}(F)$ is a projection from $E$ onto $F$ satisfying $\|{\tilde{P}}(F)\|\le C$.
	\end{proposition}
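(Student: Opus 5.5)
The plan is to obtain $\tilde P$ from a measurable selection theorem for closed-valued maps. Set $n=\dim E$ and consider
\[\mathcal K=\{(F,P)\in \mathcal S(E)\times B(E): P^2=P,\ P(E)=F,\ \|P\|\le C\}.\]
By hypothesis every section $\mathcal K_F$ is nonempty. Each section is also compact: it is contained in the compact ball $C B_{B(E)}$, and the conditions $P^2=P$ and $P|_F=\mathrm{id}_F$ are closed. Moreover, $P(E)=F$ holds as soon as $P|_F=\mathrm{id}_F$ and $\operatorname{rank}P\le\dim F$. If I show that $\mathcal K$ is a Borel subset of the product of standard Borel spaces, then the Arsenin--Kunugui theorem (Borel sets with $K_\sigma$, in particular compact, sections admit Borel uniformizations; \cite[Theorem 35.46]{KechrisBook1995}) gives a Borel map $F\mapsto\tilde P(F)\in\mathcal K_F$, which is exactly the claim.

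To check that $\mathcal K$ is Borel, I would use a Borel parametrization of $\mathcal S(E)$. First, $F\mapsto\dim F$ is Borel. Indeed, $\{F:\dim F\ge k\}$ is the set of $F$ meeting each of the open sets $U_1,\dots,U_k$ for some tuple from a countable family of small balls around rational, linearly independent tuples; this is a countable union of basic Effros--Borel sets. Second, by the Kuratowski--Ryll-Nardzewski selection theorem there are Borel maps $s_m\colon \mathcal S(E)\to E$ with $\{s_m(F)\}_m$ dense in $F$ for every $F$. Then
\[\{(F,P): P|_F=\mathrm{id}_F\}=\bigcap_m\{(F,P): Ps_m(F)=s_m(F)\}\]
is Borel, since $(F,P)\mapsto Ps_m(F)-s_m(F)$ is Borel. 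The condition $\operatorname{rank}P\le\dim F$ is Borel because rank is a lower semicontinuous, hence Borel, function on $B(E)$. The conditions $P^2=P$ and $\|P\|\le C$ are closed in $P$. So $\mathcal K$ is Borel.

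If one prefers to avoid Arsenin--Kunugui, there is a more hands-on alternative. Stratify by $k=\dim F$ and, on each stratum, produce a Borel choice of basis $F\mapsto(e_1(F),\dots,e_k(F))$ from the dense selections. Then note that the map $F\mapsto\mathcal K_F\in Cl(C B_{B(E)})$ is Borel: for compact $V$, the set $\{F:\mathcal K_F\cap V\ne\emptyset\}$ is the projection of a Borel set with compact sections, and is Borel by a compactness argument. Finally compose with a Borel selector on the compact space $Cl(C B_{B(E)})$, for instance a lexicographic minimum in coordinates, in the spirit of the $\min$ map used in the proof of Proposition~\ref{prop:gamma^(-1)}.

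I expect the main obstacle to be the measurability verification itself: showing that $\dim$ is Borel and that $\mathcal K$ is Borel in the Effros--Borel structure, where one must be careful because $\mathcal S(E)$ is not presented with a natural topology. The selection step is then an off-the-shelf theorem, and compactness of the sections is exactly what makes it applicable.
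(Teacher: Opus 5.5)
Your argument is correct, but it reaches the selection by a different route from the paper's.

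\textbf{The paper's route.} The paper fixes the compact set $K=\{T\in B(E): T^2=T,\ \|T\|\le C\}$ and proves by hand (Lemma~\ref{l:Borel2}) that $F\mapsto K_F$ is Borel from $\mathcal S(E)$ into $Cl(K)$. To do so, it takes a closed $A\sbs K$ with a dense sequence $(T_r)$ and rewrites $K_F\cap A\neq\emptyset$, using compactness of $A$, as a countable approximate condition: $\|T_re_n-e_n\|\le 2^{-i}\|e_n\|$ and $d(T_rx_k,F)\le 2^{-i}\|x_k\|$ for a basis $(x_k)$ of $E$. A Kuratowski--Ryll-Nardzewski selector $\rho\colon Cl(K)\to K$ then finishes.

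\textbf{Your route.} You only verify that the set $\mathcal K\sbs\mathcal S(E)\times B(E)$ is Borel. This is easy because every defining condition is pointwise. Your use of the Borel functions $\dim$ and $\operatorname{rank}$ to force $P(E)=F$ replaces the paper's condition $Tx_k\in F$. You then invoke a uniformization theorem for Borel sets with compact sections. Novikov's compact-section theorem already suffices here, so Arsenin--Kunugui is more than you need.

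\textbf{What each buys.} Your approach makes the measurability bookkeeping shorter and cleaner. The price is a deeper black box. The standard proof of the compact-section uniformization theorem itself goes through showing that the section map is Borel into the hyperspace of compact sets and then applying a Borel selector. For an arbitrary Borel set, that first step needs genuine descriptive-set-theoretic input. The paper obtains Borelness of $F\mapsto K_F$ directly from the explicit description of $K_F$, and uses nothing beyond Kuratowski--Ryll-Nardzewski.

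\textbf{Your alternative.} Your ``hands-on alternative'' is essentially the paper's proof. Its step ``Borel by a compactness argument'' is exactly the content of Lemma~\ref{l:Borel2}, and would need to be spelled out along those lines.
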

	
	We will need a few preparatory lemmas. Most of them are classical manipulations inspired by \cite{Bossard2002}. We include the details for the reader's convenience and warm up with the following.
	
	\begin{lemma}\label{l:Borel1} Denote 
		$$\cal B:=\left\{(F,e)=(F,(e_n)_{n=1}^\infty) \in {Cl}(E)\times E^\N:  \{e_n,\ n\in \N\}\ \text{is a dense subset of}\ F\right\}.$$
		Then $\cal B$ is a Borel subset of ${Cl}(E)\times E^\N$. 
	\end{lemma}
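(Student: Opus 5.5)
We will write $\cal B$ as the intersection of two Borel sets: the set $\cal B_1$ of pairs $(F,e)$ with $e_n\in F$ for all $n$, and the set $\cal B_2$ of pairs with $F\sbs \overline{\{e_n:n\in\N\}}$. Throughout, fix a countable base $(U_k)_k$ of the topology of $E$; since $E$ is finite dimensional, we may take the $U_k$ to be open balls $B(q_k,\rho_k)$ with rational radii and centres in a countable dense set. The only facts about the Effros--Borel structure we use are that the generating sets $\{F: F\cap U\neq\emptyset\}$ are Borel, and that $e\mapsto e_n$ is continuous on $E^\N$.

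For $\cal B_1$, we use that $e_n\in F$ if and only if, for every $k$ with $e_n\in U_k$, we have $F\cap U_k\neq\emptyset$; this holds because $F$ is closed. Hence
\[
\cal B_1=\bigcap_{n}\bigcap_{k}\Big(\{(F,e): e_n\notin U_k\}\cup\{(F,e): F\cap U_k\neq\emptyset\}\Big).
\]
Here the set $\{e: e_n\notin U_k\}$ is closed in $E^\N$ and the second set is a generating Borel set of ${Cl}(E)$, so $\cal B_1$ is Borel.

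For $\cal B_2$, we use that $F\sbs\overline{\{e_n\}}$ if and only if, for every $k$ with $F\cap U_k\neq\emptyset$, there is some $n$ with $e_n\in U_k$. Thus
\[
\cal B_2=\bigcap_k\Big(\{(F,e): F\cap U_k=\emptyset\}\cup\bigcup_n\{(F,e): e_n\in U_k\}\Big),
\]
which is again Borel. Then $\cal B=\cal B_1\cap\cal B_2$, since $\{e_n\}\sbs F$ together with $F\sbs\overline{\{e_n\}}$ is exactly density of $\{e_n\}$ in $F$.

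The only point needing care is the characterization used for $\cal B_1$. If $e_n\notin F$, then since $F$ is closed there is a basic open set $U_k$ containing $e_n$ with $U_k\cap F=\emptyset$; conversely, if $e_n\in F$, then every $U_k$ containing $e_n$ meets $F$. The edge case $F=\emptyset$ causes no trouble: then $\cal B_1$ forces no $e_n$ to exist, which is impossible for a sequence, and this correctly excludes $F=\emptyset$ from $\cal B$.
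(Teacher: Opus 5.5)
Your proposal is correct and follows the paper's proof: the same split into ``$e_n\in F$ for all $n$'' (rewritten, using that $F$ is closed, as ``every basic open set containing $e_n$ meets $F$'') and ``every basic open set meeting $F$ contains some $e_n$.'' The only difference is that you write out the countable Boolean combinations for the second condition and treat the case $F=\emptyset$ explicitly, whereas the paper simply calls that condition clearly Borel.
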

	
	\begin{proof} Let us fix a countable basis $\cal U$ of open subsets of $E$. Then, for $F\in {Cl}(E)$ and $e=(e_n)_{n=1}^\infty \in E^\N$, we have that $\{e_n: n\in \N\}$ is a dense subset of $F$ if and only if 
		\[(\forall n \in \N,\ e_n\in F)\ \ \text{and}\ (\forall U \in \cal U,\ F \cap U=\emptyset\ \text{or}\ \exists n\in \N\ \text{s.t.} \  e_n\in U).\]
		Since $F$ is closed, we have that 
		\[e_n\in F \Longleftrightarrow \forall U \in \cal U\ (e_n \notin U\ \text{or}\ F \cap U \neq \emptyset).\]
		It follows that $\{(F,e)\in {Cl}(E)\times E^\N : \forall n\in \N\ e_n\in F\}$ is Borel in ${Cl}(E)\times E^\N$. The other condition defining $\cal B$ is clearly Borel in ${Cl}(E)\times E^\N$. This concludes the proof.  
	\end{proof}
	
	The crucial tool for our proof will be the Kuratowski-Ryll-Nardzewski selection theorem. We refer to   \cite[Theorem 12.13]{KechrisBook1995} for its general statement and its proof. We apply it first on ${Cl}(E)$, equipped with its Effros--Borel structure.

	\begin{theorem}[Kuratowski-Ryll-Nardzewski Selection Theorem] 
		There exists a sequence of Borel functions $d_n:{Cl}(E) \to E$ such that for each nonempty $G \in {Cl}(E)$, $\{d_n(G) : n\in \N\}$ is dense in $G$. 
		\label{Selection_theorem}
	\end{theorem}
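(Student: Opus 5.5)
The plan is to build a single Borel selector first, and then obtain the dense sequence by localizing that selector to basic open sets. Since $E$ is finite dimensional, it is a Polish space, complete for its norm metric. Fix a countable dense subset $\{q_j\}_{j\in\N}$ of $E$ and the countable family $\cal U$ of open balls $U(q_j,r)$ with $r$ rational. The only measurability input needed is that the sets $\{G\in Cl(E): G\cap U\neq\emptyset\}$, for $U$ open, are Borel by the definition of the Effros--Borel structure.

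\textbf{Step 1: a Borel selector $s$.} Define $s\colon Cl(E)\setminus\{\emptyset\}\to E$ by successive approximation.
\begin{enumerate}
\item Let $j_0(G)$ be the least $j$ with $G\cap U(q_j,1)\neq\emptyset$.
\item Given $j_k(G)$, let $j_{k+1}(G)$ be the least $j$ such that $\|q_j-q_{j_k(G)}\|<2^{-k}+2^{-k-1}$ and $G\cap U(q_j,2^{-k-1})\cap U(q_{j_k(G)},2^{-k})\neq\emptyset$.
\end{enumerate}
Such a $j$ always exists. Indeed, the inductive hypothesis gives some $g\in G\cap U(q_{j_k},2^{-k})$, and density of $\{q_j\}$ gives $q_j$ close enough to $g$.

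Each set $\{G: j_k(G)=j\}$ is obtained from sets of the form $\{G: G\cap V\neq\emptyset\}$, with $V$ a finite intersection of open balls, by finitely many Boolean operations and countable unions over the earlier choices. So each $j_k$ is Borel by induction on $k$.

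The sequence $(q_{j_k(G)})_k$ is Cauchy, since consecutive terms are within $2^{-k+1}$ of each other. Set $s(G)=\lim_k q_{j_k(G)}$. The ball $U(q_{j_k(G)},2^{-k})$ meets $G$ for every $k$ and $G$ is closed, so $s(G)\in G$. As a pointwise limit of Borel maps into the Polish space $E$, $s$ is Borel. The set $\{\emptyset\}$ is Borel, being the complement of $\{G: G\cap E\neq\emptyset\}$, so we may set $s(\emptyset)=0$.

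\textbf{Step 2: localization.} Enumerate $\cal U=\{U_n\}_{n\in\N}$. Define
\[
d_n(G)=
\begin{cases}
s\bigl(\overline{G\cap U_n}\bigr) & \text{if } G\cap U_n\neq\emptyset,\\
s(G) & \text{otherwise.}
\end{cases}
\]
The map $G\mapsto \overline{G\cap U_n}$ is Borel from $Cl(E)$ to $Cl(E)$. This is because, for $V$ open, $\overline{G\cap U_n}\cap V\neq\emptyset$ if and only if $G\cap (U_n\cap V)\neq\emptyset$, and the latter is a generating Borel set. Hence each $d_n$ is Borel, as a piecewise definition on the Borel sets $\{G:G\cap U_n\neq\emptyset\}$ and its complement.

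For nonempty $G$ we always have $d_n(G)\in G$. Moreover, whenever $G$ meets $U_n$, we get $d_n(G)\in \overline{G\cap U_n}\subseteq \overline{U_n}$. Since every open set meeting $G$ contains a ball $U_n$ meeting $G$ whose closure still lies inside it, the set $\{d_n(G):n\in\N\}$ is dense in $G$.

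\textbf{Main obstacle.} The only delicate point is the Borel measurability of the index functions $j_k$ in Step 1. This must be checked carefully by induction, writing each event $\{j_k=j\}$ explicitly in terms of the generating sets. Everything else is routine.
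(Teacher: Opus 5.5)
Your proof is correct. The paper does not prove this statement; it quotes the general Kuratowski--Ryll-Nardzewski theorem for the Effros--Borel space of a Polish space from \cite[Theorem 12.13]{KechrisBook1995}. So there is no competing argument in the paper. What you have written is a self-contained version of the classical proof: a Borel selector obtained by successive approximation along a countable dense set, followed by localization through $G\mapsto\overline{G\cap U_n}$.

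The key verifications are all in place:
\begin{itemize}
\item Each level set $\{j_{k+1}=j\}$ is a countable union over $m$ of $\{j_k=m\}$, intersected with finitely many generating sets $\{G: G\cap V\neq\emptyset\}$ ($V$ open) and complements of such sets.
\item The identity $\overline{G\cap U_n}\cap V\neq\emptyset \iff G\cap (U_n\cap V)\neq\emptyset$ gives Borelness of the localization.
\item Density follows by taking $U_n=U(q_j,r)$ with $\|q_j-g\|<r$ and $r+\|q_j-g\|<\varepsilon$.
\end{itemize}

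Two minor remarks. First, the condition $\|q_j-q_{j_k(G)}\|<2^{-k}+2^{-k-1}$ in your recursion is redundant, since it already follows from $U(q_j,2^{-k-1})\cap U(q_{j_k(G)},2^{-k})\neq\emptyset$. Second, you never use finite dimensionality, only that $E$ is a complete separable metric space. This is worth noting, because your Step 1 applied to the compact metric space $K$ directly produces the selector $\rho\colon Cl(K)\to K$ that the paper also extracts from this theorem in the proof of Proposition~\ref{p:Borel0}.

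In short, the citation gives brevity and full Polish-space generality at no cost. Your argument gives self-containment and makes explicit that only the generating sets of the Effros--Borel structure are needed.
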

	
	Let us denote $\cal C=\cal B \cap (\cal S(E) \times E^\N)$. It follows from Lemma \ref{l:Borel1} that $\cal C$ is Borel in $\cal S(E)\times E^\N$. Then, by Theorem \ref{Selection_theorem}, $\delta:\cal S(E) \to \cal C$ defined by $\delta(F)=(F,(d_n(F))_{n=1}^\infty)$, for $F\in \cal S(E)$ is a Borel map.  
	
	\medskip We now denote $K=\{T \in B(E) : T^2=T\ \text{and}\ \|T\|\le C\}$. Since $E$ is finite dimensional, $K$ is a compact subset of $B(E)$. For $F \in \cal S(E)$, we set $K_F=\{T \in K : T(E)=F\}$, which clearly is closed, and therefore compact, in $K$. Recall that, by assumption, $K_F$ is nonempty for all $F\in \cal S(E)$. We also equip ${Cl}(K)$ with its Effros--Borel structure. 
	
	\begin{lemma}\label{l:Borel2}
		The map $\phi: \cal C \to {Cl}(K)$, defined by $\phi(F,e)=K_F$ for $(F,e)\in \cal C$, is Borel. 
	\end{lemma}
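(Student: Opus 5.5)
The strategy is to show that $\phi$ has a Borel graph, or more directly that preimages of generating sets are Borel. By Proposition~\ref{p:EffrosBorel_on compact}, since $K$ is compact, the Effros--Borel structure on ${Cl}(K)$ is generated by the sets $\{G\in {Cl}(K): G\sbs V\}$ with $V$ open in $K$. Equivalently, it is generated by $\{G: G\cap L=\emptyset\}$ with $L$ compact in $K$. So it suffices to show that for each compact $L\sbs K$, the set
\[
\{(F,e)\in\cal C : K_F\cap L\neq\emptyset\}
\]
is Borel in $\cal C$.

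The role of the dense sequence $e$ is to express the condition $T(E)=F$ countably. For $T\in K$ and $(F,e)\in\cal C$, the projection $T$ has range $F$ if and only if two things hold:
\begin{itemize}
\item[(i)] $Te_n=e_n$ for all $n$. Because $T$ is continuous and the $e_n$ are dense in $F$, this gives $T|_F=\mathrm{id}$, hence $F\sbs T(E)$.
\item[(ii)] $T(E)\sbs F$. Fixing a basis $(u_1,\dots,u_k)$ of $E$, this says $Tu_j\in F$ for each $j$. Membership of a point in a closed set $F$ was shown in the proof of Lemma~\ref{l:Borel1} to be a Borel condition, jointly in (point, $F$).
\end{itemize}
Thus the set $R=\{(F,e,T)\in \cal C\times K: T(E)=F\}$ is Borel. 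Better still, for fixed $(F,e)$ its section is closed, and condition (i) is closed jointly in $(e,T)$.

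To get Borel preimages we need to project out $T$ over a compact set, which is where the obstacle lies: a general projection of a Borel set is only analytic. I would avoid this by replacing (ii) with a distance formulation. Write $\mathrm{dist}(x,F)=\inf_n\|x-e_n\|$, which is a Borel function of $(x,e)$ and continuous in $x$ uniformly in $e$ (it is $1$-Lipschitz in $x$). Then
\[
K_F\cap L\neq\emptyset \iff \exists\, T\in L:\ \sup_n\|Te_n-e_n\|=0 \ \text{and}\ \max_j \mathrm{dist}(Tu_j,F)=0.
\]
Set $h_{e}(T)=\sup_n\|Te_n-e_n\|+\max_j\inf_n\|Tu_j-e_n\|$. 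For fixed $e$, this function is lower semicontinuous in $T$, but not continuous: the infimum part is continuous, while the sup part is only lsc. Using compactness of $L$, a minimum is attained, so
\[
K_F\cap L\neq\emptyset \iff \min_{T\in L} h_e(T)=0.
\]

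To make the dependence on $e$ countable, I would use truncations. Put $h^{N}_e(T)=\max_{n\le N}\|Te_n-e_n\|+\max_j\inf_n\|Tu_j-e_n\|$; each $h^N_e$ is continuous in $T$ and increases to $h_e$. A standard compactness (Dini-type) argument gives $\min_L h_e=\sup_N\min_L h^N_e$. Next, fix a countable dense subset $D_L\sbs L$. By continuity of $h^N_e$, $\min_L h^N_e=\inf_{T\in D_L}h^N_e(T)$, and each map $e\mapsto h^N_e(T)$ is Borel for fixed $T$. Hence $e\mapsto \sup_N\inf_{T\in D_L}h^N_e(T)$ is Borel, and the set where it vanishes is exactly $\phi^{-1}(\{G: G\cap L\neq\emptyset\})$, which is therefore Borel. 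Taking complements handles the generators $\{G: G\cap L=\emptyset\}$, so $\phi$ is Borel.

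I expect the main obstacle to be the interchange $\min_L\sup_N=\sup_N\min_L$, which rests on compactness of $L$ and monotonicity of $h^N_e$, together with checking that the distance term is correct: it uses that $F=\overline{\{e_n\}}$, which holds because $(F,e)\in\cal C$.
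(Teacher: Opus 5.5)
Your proof is correct and is essentially the paper's argument: the same reduction to the hitting sets $\{(F,e)\in\mathcal C: K_F\cap L\neq\emptyset\}$ for closed $L\subseteq K$, the same encoding of $T(E)=F$ by the conditions $Te_n=e_n$ for all $n$ and $Tu_j\in F$ for a basis $(u_j)$, and the same pairing of a countable dense subset of $L$ with a compactness extraction in $L$. The only difference is a minor technical one. You handle the infinitely many conditions $Te_n=e_n$ by truncating to $n\le N$ and then using a Dini-type interchange, whereas the paper uses the scaled tolerance $\|T_re_n-e_n\|\le 2^{-i}\|e_n\|$, which holds for all $n$ at once as soon as $\|T_r-T\|\le 2^{-i}$, so no truncation or interchange is needed.
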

	
	\begin{proof} Since  $K$ is compact metrizable, Proposition \ref{p:EffrosBorel_on compact} asserts that the Effros--Borel structure of ${Cl}(K)$ is also generated by the sets $\{F\in {Cl}(K):F\sbs V\}$, where $V$ runs over the open subsets of $K$. Thus it is enough to show that if $A \in {Cl}(K)$, then  $\{(F,e): K_F \cap A \neq \emptyset\}$ is Borel in $\cal C$.
		So let us fix a closed nonempty subset $A$ of $K$, $(T_r)_{r=1}^\infty$ a dense sequence in $A$ and $(x_k)_{k=1}^J$ a basis of $E$. Note that, for $(F,e)\in \cal C$:
		\[K_F=\{T \in K: \forall n\in \N,\ Te_n=e_n\}\cap \{T\in K: \forall k\le J,\ Tx_k \in F\}. \]
		Assume that $K_F \cap A \neq \emptyset$.  Using the density of $(T_r)_{r=1}^\infty$ in $A$, we deduce that 
		\begin{equation}\label{e:Borel3}
			\forall i \in \N\ \exists r=r_i\in \N:
		\end{equation}
		\[\forall n\in \N\ \|T_re_n-e_n\|\le 2^{-i}\|e_n\|\ \text{and}\  \forall k\le J,\ d(T_r(x_k),F)\le 2^{-i}\|x_k\|.\]
		This is actually an equivalence. Indeed, assume (\ref{e:Borel3}) is satisfied. Using the compactness of $A$, we can find a subsequence of $(T_{r_i})_i$ converging in operator norm to some $T \in A$. Then $Te_n=e_n$ for all $n\in \N$ and $Tx_k \in F$ for all $ k\le J$. Due to the local compactness of $E$ and $F$, the condition $d(T_r(x_k),F)\le 2^{-i}\|x_k\|$ is equivalent to $F\cap \overline{B}(T_r(x_k),2^{-i}\|x_k\|)\neq \emptyset$, which, by the remark at the beginning of the proof, is a Borel condition in ${Cl}(K)$. It should now be clear that Equation (\ref{e:Borel3}) defines a Borel subset of $\cal C$. 
	\end{proof}

	\begin{proof}[Proof of Proposition \ref{p:Borel0}]
		It also follows from the Kuratowski-Ryll-Nardzewski Selection Theorem that there exists a Borel map $\rho:{Cl}(K) \to K$ such that for all nonempty $G$ in ${Cl}(K)$, $\rho(G) \in G$. Define now ${\tilde{P}}=\rho \circ \phi \circ \delta: \cal S (E) \to B(E)$. It follows from our discussion that ${\tilde{P}}$ is Borel and for all $F\in \cal S(E)$, ${\tilde{P}}(F) \in K_F$. This concludes our proof. 
	\end{proof}

	In order to explain how Proposition \ref{p:Borel0} allows us to deduce Theorem \ref{t:C-complementation} we need to recall the proof technique of \cite{CKK2019}. Let $d\in\N$, $\lambda$ be the Lebesgue measure on $\R^d$, and let $E$ denote $\R^d$ equipped with a norm. Let $\M(\R^d,E)$ denote the vector space of $E$-valued finitely additive measures on $\R^d$, equipped with the total variation norm. Following \cite[pages~414, 420, 421, 427]{CKK2019}, we define the following Banach spaces, which, apart from $X(E)$, are closed subspaces of $\M(\R^d,E)$:
	\begin{align*}
		\M_\sigma(\R^d,E) &= \{\mu\in\M(\R^d,E): \mu \text{ is } \sigma\text{-additive}\} \\
		\M_{ac}(\R^d,E) &= \{\mu\in\M(\R^d,E): \forall B\in\B(\R^d), \lambda(B)=0 \Rightarrow \mu(B)=0\} \\
		Y(E) &= \{g\in L_1(\R^d,E): \mathrm{div}\,g = 0\} \\
		X(E)&=Y(E)^\perp \sbs L^\infty(\R^d,E^*),\ \text{where}\ L_1(\R^n,E)^*\ \text{is identified with } L^\infty(\R^d,E^*)\\
		Y_\sigma(E) &= \{\mu\in\M_\sigma(\R^d,E): \mathrm{div}\,\mu = 0\} \\
		\M_\sigma^{\mathrm{div}}(E) &= (\M_\sigma(\R^d,E) \cap \M_{ac}(\R^d,E)) + Y_\sigma(E)
	\end{align*}
	The space $L_1(\R^d,E)$ can be isometrically identified with $\M_{ac}(\R^d,E)\cap\M_\sigma(\R^d,E)$ via $f \mapsto f\lambda$ (see \cite[page~422]{CKK2019}). We also recall the restriction map $R\colon \M(\R^d,E) \to \M_\sigma(\R^d,E)$ defined to be the adjoint of the inclusion $C_0(\R^d,E^*) \to \B_b(\R^d,E^*)$ (\cite[pages~416, 420]{CKK2019}).
	
	We recall results from \cite{CKK2019}:
	
	\begin{proposition}\emph{(\cite[ Proposition~6.2 and Lemma~6.5]{CKK2019}).}
		\begin{itemize}
			\item The canonical identity map $J\colon L_1(\R^d,E)/Y(E) \to \M_\sigma^{\mathrm{div}}(E)/Y_\sigma(E)$ given by \\ 
			$J(f+Y(E)) = f\lambda + Y_\sigma(E)$ is a surjective isometry.
			\item $R(X(E)^\perp) = Y_\sigma(E)$  
		\end{itemize}\label{prop:CKKresults}
	\end{proposition}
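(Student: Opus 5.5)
For the first item I would argue directly with mollifiers. For the second I would use duality: $X(E)^\perp$ is the weak$^*$ closure of $Y(E)$, and mollification lets me pass between $Y(E)$ and $Y_\sigma(E)$. Throughout, $(\rho_\eps)_{\eps>0}$ is a standard approximate identity on $\R^d$: $\rho_\eps\ge0$, $\int\rho_\eps=1$, $\rho_\eps$ smooth with support in the $\eps$-ball. Convolution with $\rho_\eps$ has three properties I will use.
\begin{itemize}
\item It maps $\M_\sigma(\R^d,E)$ into $L_1(\R^d,E)$ with $\|\rho_\eps*\nu\|_{L_1}\le\|\nu\|$.
\item It commutes with the distributional divergence.
\item $\rho_\eps*f\to f$ in $L_1$, and $\rho_\eps*\varphi\to\varphi$ uniformly for $\varphi\in C_0(\R^d,E^*)$.
\end{itemize}

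\emph{First item.} Recall that $\M_\sigma^{\mathrm{div}}(E)=L_1(\R^d,E)\lambda+Y_\sigma(E)$.
\begin{itemize}
\item Well-definedness: if $g\in Y(E)$, then $g\lambda$ is $\sigma$-additive and divergence free, so $g\lambda\in Y_\sigma(E)$. Thus $J$ is a well-defined linear map.
\item Surjectivity: this is immediate from the description of $\M_\sigma^{\mathrm{div}}(E)$ above.
\item $\|J(f+Y(E))\|\le\|f+Y(E)\|$: this holds because $Y(E)\lambda\sbs Y_\sigma(E)$ and $f\mapsto f\lambda$ is isometric.
\item Reverse inequality: fix $\nu\in Y_\sigma(E)$. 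Then $\rho_\eps*\nu\in Y(E)$, so
\[
\operatorname{dist}(f,Y(E))\le\|f-\rho_\eps*f\|_{L_1}+\|\rho_\eps*(f\lambda+\nu)\|_{L_1}\le\|f-\rho_\eps*f\|_{L_1}+\|f\lambda+\nu\|.
\]
Letting $\eps\to0$ and taking the infimum over $\nu$ gives $\|f+Y(E)\|\le\|J(f+Y(E))\|$.
\end{itemize}

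\emph{Second item, inclusion $R(X(E)^\perp)\sbs Y_\sigma(E)$.} Here I identify $L^\infty(\R^d,E^*)^*$ with the $\lambda$-continuous part of $\M(\R^d,E)$, as in \cite{CKK2019}. Under this identification $X(E)^\perp=Y(E)^{\perp\perp}$ is the weak$^*$ closure of $Y(E)$. Let $\psi\in C_c^\infty(\R^d)$. Then $\nabla\psi\in C_0(\R^d,E^*)$, and for every $g\in Y(E)$ we have $\int g\cdot\nabla\psi\,d\lambda=0$, so $\nabla\psi\in X(E)$. Since $R$ is the adjoint of the inclusion, for $\mu\in X(E)^\perp$ we get $\langle R\mu,\nabla\psi\rangle=\langle\mu,\nabla\psi\rangle=0$. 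This says exactly that $\mathrm{div}(R\mu)=0$, and $R\mu$ is $\sigma$-additive, so $R\mu\in Y_\sigma(E)$.

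\emph{Second item, inclusion $Y_\sigma(E)\sbs R(X(E)^\perp)$.} Fix $\nu\in Y_\sigma(E)$. The functions $\rho_\eps*\nu$ lie in $Y(E)$ and are bounded in norm by $\|\nu\|$. By weak$^*$ compactness they have a weak$^*$ cluster point $\mu$, and $\mu\in X(E)^\perp$ because $X(E)^\perp$ is weak$^*$ closed. For $\varphi\in C_0(\R^d,E^*)$,
\[
\langle\rho_\eps*\nu,\varphi\rangle=\langle\nu,\check\rho_\eps*\varphi\rangle\to\langle\nu,\varphi\rangle,
\]
where $\check\rho_\eps(x)=\rho_\eps(-x)$. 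Passing to the cluster point gives $\langle R\mu,\varphi\rangle=\langle\mu,\varphi\rangle=\langle\nu,\varphi\rangle$ for all such $\varphi$. Hence $R\mu=\nu$, by the Riesz representation of $C_0(\R^d,E^*)^*$.

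\emph{Main obstacle.} The computations are routine; the delicate part is the bookkeeping of the duality framework. One must check that the pairing of $L^\infty(\R^d,E^*)^*$ with $C_0(\R^d,E^*)$ used in $R$ agrees with the pairing with $X(E)$, so that $R(\mu)$ is evaluated correctly on $C_0$ functions. One must also check that $\nabla\psi$ genuinely lies in $X(E)$, which requires interpreting $\mathrm{div}$ through the duality between $E$ and $E^*=\R^d$.
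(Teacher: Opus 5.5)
The paper gives no proof of this proposition. Both items are imported from \cite[Proposition~6.2 and Lemma~6.5]{CKK2019} and used as black boxes in the proof of Theorem~\ref{thm:comp1}. Your proposal is a correct, self-contained proof, so the comparison is between an actual argument and a citation.

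In your argument a single tool, convolution with an approximate identity, does all the work. The contraction estimate $\|\rho_\eps*\mu\|_{L_1(\R^d,E)}\le\|\mu\|$, together with $\rho_\eps*Y_\sigma(E)\sbs Y(E)$, gives the nontrivial inequality $\|f+Y(E)\|\le\|f\lambda+Y_\sigma(E)\|$. A weak$^*$ cluster point of $(\rho_\eps*\nu)_\eps$ in $L^\infty(\R^d,E^*)^*$ lies in $Y(E)^{\perp\perp}=X(E)^\perp$ and is an $R$-preimage of $\nu\in Y_\sigma(E)$. The inclusion $R(X(E)^\perp)\sbs Y_\sigma(E)$ reduces to the observation that $\nabla\psi\in X(E)\cap C_0(\R^d,E^*)$ for every test function $\psi$.

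\textbf{What your route buys.} It shows that both facts are elementary once the duality framework of \cite{CKK2019} is in place. As a by-product, your first item proves that $\M_\sigma^{\mathrm{div}}(E)$ is norm closed in $\M_\sigma(\R^d,E)$, which the paper asserts without comment. Indeed, $J(L_1(\R^d,E)/Y(E))$ is complete, hence closed in $\M_\sigma(\R^d,E)/Y_\sigma(E)$. Its preimage under the quotient map is exactly $\M_\sigma^{\mathrm{div}}(E)$, because $Y_\sigma(E)\sbs\M_\sigma^{\mathrm{div}}(E)$.

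\textbf{What the citation buys.} It lets the paper skip the bookkeeping you flag at the end, and that bookkeeping is harmless. An element $\mu\in\M_{ac}(\R^d,E)$ annihilates every bounded Borel function vanishing $\lambda$-a.e., since such a function is supported in a Lebesgue-null Borel set on which $\mu$ vanishes. Hence its action on $\B_b(\R^d,E^*)$ factors through $L^\infty(\R^d,E^*)$. Consequently $\langle R\mu,\varphi\rangle=\langle\mu,\varphi\rangle$ for $\varphi\in C_0(\R^d,E^*)$, whichever of the two pairings is used. Likewise, $\nabla\psi\in X(E)$ is simply the distributional definition of $\mathrm{div}\,g=0$ for $g\in Y(E)$, with $E^*$ identified with $\R^d$ through the dot product.
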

	
	The following is a general statement that can be extracted from the proof of \cite[Proposition~8.1]{CKK2019}.
	
	\begin{theorem} \label{thm:comp1}
		Suppose that there exists a constant $C<\infty$ and a projection $P\colon  \M_\sigma(\R^d,E) \to \M_\sigma^{\mathrm{div}}(E)$ with $\|P\|\leq C$. Then $L_1(\R^d,E)/Y(E)$ is $C$-complemented in $\M_{ac}(\R^d,E)/X(E)^\perp$.
	\end{theorem}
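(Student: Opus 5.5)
We will build the projection from $\M_{ac}(\R^d,E)/X(E)^\perp$ onto (the image of) $L_1(\R^d,E)/Y(E)$ by composing three maps: the restriction $R$, the given projection $P$, and the inverse of the isometry $J$ from Proposition~\ref{prop:CKKresults}. First we check that $L_1(\R^d,E)/Y(E)$ sits isometrically inside $\M_{ac}(\R^d,E)/X(E)^\perp$ via $f+Y(E)\mapsto f\lambda+X(E)^\perp$. Since $X(E)=Y(E)^\perp$, we have $X(E)^\perp\cap L_1(\R^d,E)=Y(E)$ ($Y(E)$ is norm closed, so it equals its bipolar within $L_1$). The quotient norm in $\M_{ac}/X(E)^\perp$ of $f\lambda$ is computed by duality as $\sup\{\langle g,f\rangle : g\in X(E),\|g\|\le 1\}$, provided $\M_{ac}(\R^d,E)$ is identified with $L^\infty(\R^d,E^*)^*$ (or the relevant dual) as in \cite{CKK2019}. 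This sup equals $\|f+Y(E)\|$ because $(L_1/Y)^*=Y^\perp=X(E)$. So the map $\iota$ is an isometric embedding.

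Next we define $Q\colon \M_{ac}(\R^d,E)/X(E)^\perp\to L_1(\R^d,E)/Y(E)$ by
\[Q(\mu+X(E)^\perp):=J^{-1}\bigl(P(R\mu)+Y_\sigma(E)\bigr).\]
For this to be well defined, $PR$ must map $X(E)^\perp$ into $Y_\sigma(E)$. By Proposition~\ref{prop:CKKresults}, $R(X(E)^\perp)=Y_\sigma(E)\sbs \M_\sigma^{\mathrm{div}}(E)$, and $P$ fixes $\M_\sigma^{\mathrm{div}}(E)$, so $PR(X(E)^\perp)=Y_\sigma(E)$. We also need $R$ to map $\M_{ac}$ into $\M_\sigma$; this holds by definition, since $R$ lands in $\M_\sigma(\R^d,E)$. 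For the norm, $\|R\|\le 1$ as an adjoint of an isometric inclusion, $\|P\|\le C$, and the quotient map and $J^{-1}$ are contractive. Hence
\[\|Q(\mu+X(E)^\perp)\|\le C\|\mu+\nu\|\]
for every $\nu\in X(E)^\perp$, and taking the infimum over $\nu$ gives $\|Q\|\le C$.

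Finally we check $Q\iota=\mathrm{id}$. For $f\in L_1(\R^d,E)$, the measure $f\lambda$ is already $\sigma$-additive, so $R(f\lambda)=f\lambda$ (restriction is the identity on $\M_\sigma$). Since $f\lambda\in\M_\sigma\cap\M_{ac}\sbs\M_\sigma^{\mathrm{div}}(E)$, we get $P(f\lambda)=f\lambda$, and $J^{-1}(f\lambda+Y_\sigma(E))=f+Y(E)$. Then $\iota Q$ is a projection of norm at most $C$ onto $\iota(L_1/Y)$.

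The main obstacle is the isometric-embedding step, which needs care about which duality identifies $\M_{ac}$ and $X(E)^\perp$. The well-definedness of $Q$ is immediate from Proposition~\ref{prop:CKKresults}, whereas the embedding step requires knowing that the norm of $f\lambda+X(E)^\perp$ is computed against $X(E)$. I would handle this by invoking the identification $L^\infty(\R^d,E^*)^*\cong\M_{ac}(\R^d,E)$ from \cite{CKK2019}, under which $X(E)^\perp$ is an annihilator. The standard fact $\|\mu+Z^\perp\|=\sup_{g\in B_Z}|\langle \mu,g\rangle|$ for a subspace $Z$ of the predual then gives the isometry.
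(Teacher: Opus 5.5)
Your proposal is correct and follows the paper's proof. You descend $PR$ to the quotient using $R(X(E)^\perp)=Y_\sigma(E)\sbs\M_\sigma^{\mathrm{div}}(E)$ and the fact that $P$ fixes $\M_\sigma^{\mathrm{div}}(E)$, compose with $J^{-1}$, and check the projection property on classes of $f\lambda$, exactly as the paper does. Your only addition is an explicit check that $f+Y(E)\mapsto f\lambda+X(E)^\perp$ is isometric; the paper leaves this implicit, since it is the canonical embedding of $\F(E)$ into $\F(E)^{**}$ under the identifications of \cite{CKK2019}.
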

	
	\begin{proof}
		The proof is the same as that of \cite[Proposition~8.1]{CKK2019}. Here are the details:
		
		By Proposition~\ref{prop:CKKresults} and the fact that $P(Y_\sigma(E)) = Y_\sigma(E)$ since $P$ is a projection onto $M_\sigma^{\mathrm{div}}(E) \sps Y_\sigma(E)$, we have $P(R(X(E)^\perp)) =Y_\sigma(E)$, which implies that $PR$ descends to a well-defined linear map $V\colon \M_{ac}(\R^d,E)/X(E)^\perp \to \M_\sigma^{\mathrm{div}}(E)/Y_\sigma(E)$ of norm at most $  C$. Then by Proposition~\ref{prop:CKKresults}, $J^{-1}V: \M_{ac}(\R^d,E)/X(E)^\perp \to L_1(\R^d,E)/Y(E)$ has norm at most $C$ and it is a projection onto $L_1(\R^d,E)/Y(E)$ because for any $f \in L_1(\R^d,E)$,
		\[J^{-1}(V(f + Y(E)) = J^{-1}(f\lambda + Y_\sigma(E)) = f+Y(E).\qedhere\]
	\end{proof}
	
	Therefore, in order to prove Theorem~\ref{t:C-complementation}, it remains to prove that there exists a projection $P\colon \M_\sigma(\R^d,E) \to \M_\sigma^{\mathrm{div}}(E)$ with $\|P\|\leq C$. Towards this end, we need to recall another result from \cite{CKK2019}.
	
	\begin{proposition}\emph{(\cite[Proposition~7.1]{CKK2019}).} 
		Let $\nu \in \M_\sigma(E)$ be a nonnegative measure. Then there exists a mapping $T_\nu$ assigning to each $x\in\R^d$ a vector subspace $T_\nu(x) \sbs E$ with the following properties:
		\begin{itemize}
			\item $T_\nu$ is lower $\nu$-measurable, i.e., $\{x\in\R^d: T_\nu(x) \cap G \neq \emptyset\}$ is $\nu$-measurable for any $G \sbs E$ open.
			\item For any $f \in L_1(\nu,E)$, we have
			\begin{equation*}
				f\nu \in \M_\sigma^{\mathrm{div}}(E) \Leftrightarrow f(x) \in T_\nu(x) \hspace{.15in} \text{ for } \nu\text{-almost all } x\in\R^d.
			\end{equation*}
		\end{itemize}\label{prop:CKK7.1}
	\end{proposition}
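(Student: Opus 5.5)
The plan is to study the set
\[V_\nu:=\{f\in L_1(\nu,E):\ f\nu\in \M_\sigma^{\mathrm{div}}(E)\}\]
and show two things: it is a closed subspace of $L_1(\nu,E)$, and it is \emph{decomposable}, i.e.\ $\one_B f\in V_\nu$ whenever $f\in V_\nu$ and $B$ is Borel. The field $T_\nu$ is then read off from a countable dense subset of $V_\nu$. Closedness is immediate. The map $f\mapsto f\nu$ is a linear isometry from $L_1(\nu,E)$ into $\M_\sigma(\R^d,E)$. The space $\M_\sigma^{\mathrm{div}}(E)$ is closed, being the sum of $L_1(\R^d,E)$ and the closed space $Y_\sigma(E)$; I would check closedness of this sum via Proposition~\ref{prop:CKKresults}, or simply take it as part of the setup of \cite{CKK2019}. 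So $V_\nu$ is the preimage of a closed subspace.

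For decomposability I first treat smooth multipliers. Let $\varphi\in C_c^\infty(\R^d)$, and write $f\nu=g\lambda+y$ with $g\in L_1(\R^d,E)$ and $\mathrm{div}\,y=0$. Then $\varphi f\nu=\varphi g\lambda+\varphi y$, and
\[\mathrm{div}(\varphi y)=\langle\nabla\varphi, y\rangle=:\sigma\]
is a finite, compactly supported scalar measure. Its total mass is zero, since $\sigma(\R^d)=\langle \mathrm{div}(\varphi y),1\rangle=0$. It therefore suffices to find $h\in L_1(\R^d,E)$ with $\mathrm{div}\,h=\sigma$: then $\varphi y=h\lambda+(\varphi y-h\lambda)$ with the second summand in $Y_\sigma(E)$. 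The case $d=1$ is degenerate and I would treat it separately. There $L_1$ fields with zero derivative vanish, divergence-free measures are constant multiples of $\lambda$ and hence finite only if zero, and $\M_\sigma^{\mathrm{div}}(E)$ is just the absolutely continuous part. Thus $T_\nu(x)=E$ on a $\lambda$-full set carrying $\nu_{ac}$ and $T_\nu(x)=\{0\}$ on a $\lambda$-null set carrying $\nu_s$. For $d\ge2$, the step of solving $\mathrm{div}\,h=\sigma$ in $L_1$ is the one I expect to be the main obstacle. The naive choice $h=\nabla\Phi*\sigma$, with $\Phi$ the fundamental solution, is locally integrable because $|\nabla\Phi|\sim|x|^{1-d}$. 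But the zero-mass condition only gives decay $|x|^{-d}$ at infinity, which is borderline non-integrable. I would first try to repair this by subtracting a fixed compactly supported $L_1$ field $h_0$ with $\mathrm{div}\,h_0=\delta_0-\psi$, with $\psi$ a smooth bump of mass one. Writing $\sigma=\int(\delta_z-\delta_0)\,d\sigma(z)$, one then superposes compactly supported $L_1$ fields solving $\mathrm{div}\,h_z=\delta_z-\delta_0$, for example translates of $h_0$ plus smooth corrections. These have $L_1$ norms bounded uniformly for $z$ in the compact support of $\sigma$, which gives an $L_1$ solution supported in a fixed compact set.

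Having closure under $C_c^\infty$ multipliers, I pass to indicators. Given a Borel set $B$, choose $\varphi_n\in C_c^\infty$ with $|\varphi_n|\le1$ and $\varphi_n\to\one_B$ $\nu$-a.e. This is possible by Lusin's theorem for the finite (or $\sigma$-finite) measure $\nu$. By dominated convergence $\varphi_nf\to\one_Bf$ in $L_1(\nu,E)$, and closedness of $V_\nu$ gives $\one_Bf\in V_\nu$. By linearity and closedness again, $V_\nu$ is then an $L^\infty(\nu)$-module.

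Finally I build the field. Since $L_1(\nu,E)$ is separable, pick a dense sequence $(f_k)$ in $V_\nu$, fix Borel representatives, and set
\[T_\nu(x):=\spn\{f_k(x):k\in\N\}.\]
This is a subspace of the finite dimensional space $E$. Lower measurability follows because $T_\nu(x)\cap G\ne\emptyset$ iff some rational combination $\sum_{k\le m} q_kf_k(x)$ lies in $G$, a countable union of measurable sets. If $f\in V_\nu$, then some subsequence of $(f_k)$ converges to $f$ $\nu$-a.e., and $T_\nu(x)$ is closed, so $f(x)\in T_\nu(x)$ a.e. Conversely, suppose $f(x)\in T_\nu(x)$ a.e. By a measurable selection argument I would write $f=\lim_m\sum_{k\le m}a_{k,m}f_k$ a.e. with Borel coefficients $a_{k,m}$. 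Truncating to sets where the coefficients are bounded and $f_k$ is integrable, the module property shows that each truncated combination lies in $V_\nu$. Dominated convergence, using $|f|$ integrable after a further truncation of the approximants by $\one_{\{|\cdot|\le 2|f|\}}$, together with closedness of $V_\nu$, then gives $f\in V_\nu$.
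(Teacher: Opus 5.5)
The paper gives no proof of this statement. It is quoted from \cite[Proposition~7.1]{CKK2019} and used as a black box in the proof of Theorem~\ref{thm:comp2}, so there is no internal argument to compare yours with. Judged on its own, your proof is correct, and each step of the scheme works:
\begin{itemize}
\item $V_\nu$ is a norm-closed subspace of $L_1(\nu,E)$.
\item $V_\nu$ is stable under multiplication by $\varphi\in C_c^\infty(\R^d)$. For divergence-free $y$, $\mathrm{div}(\varphi y)=\langle\nabla\varphi,y\rangle$ is a compactly supported finite measure of total mass zero, hence the divergence of an $L_1$ field.
\item Lusin's theorem and dominated convergence upgrade this to stability under $L^\infty(\nu)$.
\item A closed $L^\infty(\nu)$-submodule of the separable space $L_1(\nu,E)$ is exactly the set of a.e.\ sections of the lower measurable field $x\mapsto\spn\{f_k(x):k\in\N\}$ built from a dense sequence. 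The converse direction via bounded-coefficient truncations and domination by $2|f|$ also works.
\end{itemize}

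First remark: the phrase ``sum of $L_1(\R^d,E)$ and the closed space $Y_\sigma(E)$'' is not by itself a reason for closedness, since sums of closed subspaces need not be closed. The right justification is the one you allude to. $J$ in Proposition~\ref{prop:CKKresults} is a surjective isometry from the Banach space $L_1(\R^d,E)/Y(E)$, so $\M_\sigma^{\mathrm{div}}(E)/Y_\sigma(E)$ is complete. It is therefore closed in $\M_\sigma(\R^d,E)/Y_\sigma(E)$, and $\M_\sigma^{\mathrm{div}}(E)$ is its preimage under the quotient map.

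Second remark: the step you expect to be the main obstacle is immediate from the identification $\F(E)\cong L_1(\R^d,E)/Y(E)$ that the paper already uses. Under it, $g$ acts on $u\in\Lip_0(E)$ by $\int\langle\nabla u,g\rangle\,d\lambda$.
\begin{itemize}
\item The functional $u\mapsto\int u\,d\sigma$ is bounded and weak$^*$ continuous on bounded subsets of $\Lip_0(E)$, because pointwise convergence of equi-Lipschitz functions is uniform on the compact set $\supp\sigma$. Hence it belongs to $\F(E)$.
\item Any preimage $h\in L_1(\R^d,E)$ then satisfies $\mathrm{div}(-h)=\sigma$ in the sense of distributions. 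To see this, test with $u=\phi-\phi(0)$ for $\phi\in C_c^\infty(\R^d)$ and use $\sigma(\R^d)=0$.
\end{itemize}
Your superposition construction can also be made rigorous. Take $h_z=h_0(\cdot-z)-h_0-\int_0^1 z\,\psi(\cdot-tz)\,dt$; this satisfies $\mathrm{div}\,h_z=\delta_z-\delta_0$, is jointly measurable in $(x,z)$, and has $\sup_{z\in\supp\sigma}\|h_z\|_{L_1}<\infty$. Fubini then gives $h=\int h_z\,d\sigma(z)$. This is more work than needed, however. The separate case $d=1$ is also superfluous: there $Y_\sigma(E)=\{0\}$, so the multiplier step is trivial.
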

	
	The following theorem is the last ingredient needed to prove Theorem~\ref{t:C-complementation}.
	
	\begin{theorem} \label{thm:comp2}
		Suppose that there exists a constant $C<\infty$ such that every subspace of $E$ is $C$-complemented. Then there exists a projection $P\colon \M_\sigma(\R^d,E) \to \M_\sigma^{\mathrm{div}}(E)$ with $\|P\|\leq C$.
	\end{theorem}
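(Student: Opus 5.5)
The plan is to build $P$ measure by measure, using Proposition~\ref{prop:CKK7.1} together with the Borel selection of Proposition~\ref{p:Borel0}, and then check that these pieces fit together into one linear map. For $\mu\in\M_\sigma(\R^d,E)$, let $\nu=|\mu|$ be its total variation measure. By the Radon--Nikodym theorem in the finite dimensional space $E$, we write $\mu=f\nu$ with $f\in L_1(\nu,E)$. Proposition~\ref{prop:CKK7.1} provides the subspace map $T_\nu$. We then define
\[P\mu := \big(\tilde P(T_\nu(\cdot))f(\cdot)\big)\,\nu .\]
For this to be well defined, $x\mapsto \tilde P(T_\nu(x))f(x)$ must be $\nu$-measurable. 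Lower $\nu$-measurability of $T_\nu$ says exactly that $x\mapsto T_\nu(x)$ is $\nu$-measurable into $Cl(E)$ with its Effros--Borel structure, and hence into $\cal S(E)$. Composing with the Borel map $\tilde P$ gives a $\nu$-measurable map into $B(E)$. Since $\|\tilde P(F)\|\le C$, we get $\|P\mu\|\le C\int\|f\|\,d\nu=C\|\mu\|$. The pointwise values lie in $T_\nu(x)$, so by Proposition~\ref{prop:CKK7.1} we have $P\mu\in\M_\sigma^{\mathrm{div}}(E)$. If $\mu\in\M_\sigma^{\mathrm{div}}(E)$, then $f(x)\in T_\nu(x)$ a.e., so $P\mu=\mu$. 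This shows $P$ is idempotent onto $\M_\sigma^{\mathrm{div}}(E)$.

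The main obstacle is linearity, since each measure uses its own reference measure $\nu$. To handle it, I would first prove a compatibility lemma: if $\nu'\ll\nu$ are nonnegative, then $T_{\nu'}=T_\nu$ holds $\nu'$-a.e. The argument runs as follows. Take $g\in L_1(\nu',E)$ and write $\nu'=h\nu$. Then $g\nu'=(gh)\nu$, so by the characterization in Proposition~\ref{prop:CKK7.1} applied to both measures, $g(x)\in T_{\nu'}(x)$ $\nu'$-a.e. if and only if $g(x)h(x)\in T_\nu(x)$ $\nu$-a.e. On $\{h>0\}$ the latter is equivalent to $g(x)\in T_\nu(x)$.

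To deduce equality of the subspaces, I would test with $g=\one_B\, s$, where $s$ is a measurable selection of $T_\nu$. Using Kuratowski--Ryll-Nardzewski, applied $\nu$-measurably after modifying on a null set, there is a countable family of such selections $s_n$ that is dense in $T_\nu(x)$ at each point. This gives $T_\nu\sbs T_{\nu'}$ a.e. Running the argument symmetrically with selections of $T_{\nu'}$ gives the reverse inclusion.

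With the lemma in hand, linearity follows. For $\mu_1,\mu_2$, take $\nu=|\mu_1|+|\mu_2|$. We compute each $P\mu_i$, and also $P(\mu_1+\mu_2)$, via densities with respect to $\nu$ instead of with respect to $|\mu_i|$. Since $\tilde P(F)$ is linear, this yields additivity, and homogeneity is immediate.

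Independence of the formula from the chosen dominating measure is exactly what the lemma gives. If $\mu=f|\mu|=k\nu$ with $|\mu|\ll\nu$, then $T_{|\mu|}=T_\nu$ a.e. with respect to $|\mu|$. The two candidate measures therefore agree on $\{k\neq0\}$. Off that set, both $f$ and $k$ vanish, so they contribute nothing. Combining linearity, the norm bound, and the facts that the image lies in $\M_\sigma^{\mathrm{div}}(E)$ and that $P$ fixes $\M_\sigma^{\mathrm{div}}(E)$, we obtain the required projection with $\|P\|\le C$.
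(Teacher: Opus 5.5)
Your proposal is correct and follows essentially the same route as the paper. Both define $P(f\nu)=\tilde P(T_\nu(\cdot))f(\cdot)\,\nu$ via Propositions~\ref{prop:CKK7.1} and \ref{p:Borel0}, and both use the compatibility $T_{\nu_1}=T_{\nu_2}$ $\nu_1$-a.e.\ for $\nu_1\ll\nu_2$ to get consistency and linearity. The differences are minor: you normalize to $\nu=|\mu|$, and you reprove \cite[Lemma~7.3]{CKK2019} with measurable selections instead of citing it. In your last step, the set $\{k=0\}$ contributes nothing because it is $|\mu|$-null, not because $f$ vanishes there.
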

	
	\begin{proof}
		The proof is just the same as in \cite[Propositions~7.2 and 7.4]{CKK2019}. The only difference is that we use the measurably selected projections $\tilde{P}(F)$ of Proposition~\ref{p:Borel0} instead of orthogonal projections arising from a $C$-equivalent Euclidean norm. Here are the details:
		
		For each $\nu\in\M_\sigma(E)$, let $T_\nu$ be the mapping provided by Proposition~\ref{prop:CKK7.1}, and define $\tilde{P}_\nu\colon L_1(\nu,E) \to L_1(\nu,E)$ by
		\begin{equation*}
			\tilde{P}_\nu(f)(x) = \tilde{P}(T_\nu(x))(f(x)),
		\end{equation*}
		where $\tilde{P}\colon \mathcal{S}(E) \to B(E)$ is a map satisfying the conclusion of Proposition~\ref{p:Borel0}. For $\tilde{P}_\nu$ to be well-defined, we need to verify $\nu$-measurability of $\tilde{P}_\nu(f)$. Let $\nu\in\M_\sigma(E)$ and $f\in L_1(\nu,E)$. Then notice that $\tilde{P}_\nu(f)$ is the composition of the mappings
		\begin{equation*}
			\R^d \overset{T_\nu \times f}\to \cal S (E) \times E \overset{\tilde{P} \times id}\to B(E)\times E \to E,
		\end{equation*}
		where the last map is the evaluation $(L,x) \mapsto L(x)$. When $\R^d$ is equipped with the $\sigma$-algebra of $\nu$-measurable sets and the other spaces are equipped with their Borel $\sigma$-algebras, each of the maps in this composition are obviously measurable, and hence so is $\tilde{P}_\nu(f)$. With this established, we clearly have that $\tilde{P}_\nu$ is a linear operator on $L_1(\nu,E)$ with norm at most $C$, and also that $\tilde{P}_\nu(hf) = h\tilde{P}_\nu(f)$ whenever $h \in L^\infty(\nu)$.
		
		We also clearly have that $\tilde{P}_\nu(f)\nu \in \M_\sigma^{\mathrm{div}}(E)$ for every $\nu\in\M_\sigma(E)$ and every $f \in L_1(\nu,E)$ by definition of $\tilde{P}$ and $T_\nu$. Then we define $P: M_\sigma(\R^d,E) \to \M_\sigma^{\mathrm{div}}(E)$ by
		\begin{equation*}
			P(\mu) = \tilde{P}_\nu(f)\nu \hspace{.15in} \text{if } \mu = f\nu \, \text{ where } \nu\in\M_\sigma(\R^d),\, \nu\geq 0,\, f\in L_1(\nu,E).
		\end{equation*}
		We first check that $P$ is well-defined using \cite[Lemma~7.3]{CKK2019}, which states that
		\begin{equation} \label{eq:CKK7.3}
			\nu_1 \ll \nu_2 \Rightarrow T_{\nu_1}(x) = T_{\nu_2}(x) \text{ for } \nu_1\text{-a.e. } x\in\R^d.
		\end{equation}
		The argument is just a repetition of that on \cite[page~437]{CKK2019}. Let $\nu_1,\nu_2 \in \M_\sigma(\R^d)$ with $\nu_1,\nu_2 \geq 0$, and let $f_1\in L_1(\nu_1,E)$ and $f_2\in L_1(\nu_2,E)$. Let $h_1,h_2 \in L^\infty(\nu_1+\nu_2)$ be the Radon-Nikod\'ym derivatives of $\nu_1,\nu_2$, respectively, with respect to $\nu_1+\nu_2$. Assume that $f_1\nu_1 = f_2\nu_2$, which implies
		\begin{equation} \label{eq:h1f1=h2f2}
			(h_1f_1)(x) = (h_2f_2)(x) \text{ for } (\nu_1+\nu_2)\text{-a.e. } x\in\R^d.
		\end{equation}
		Then we have
		\begin{align*}
			\tilde{P}_{\nu_1}(f_1)\nu_1 &\overset{\eqref{eq:CKK7.3}}{=} \tilde{P}_{\nu_1+\nu_2}(f_1)\nu_1 = \tilde{P}_{\nu_1+\nu_2}(f_1)h_1(\nu_1+\nu_2) = \tilde{P}_{\nu_1+\nu_2}(h_1f_1)(\nu_1+\nu_2) \\
			&\overset{\eqref{eq:h1f1=h2f2}}{=} \tilde{P}_{\nu_1+\nu_2}(h_2f_2)(\nu_1+\nu_2) = \tilde{P}_{\nu_1+\nu_2}(f_2)h_2(\nu_1+\nu_2) = \tilde{P}_{\nu_1+\nu_2}(f_2)\nu_2 \\
			&\overset{\eqref{eq:CKK7.3}}{=} \tilde{P}_{\nu_2}(f_2)\nu_2.
		\end{align*}
		This proves that $P$ is well-defined.
		
		The fact that $P$ is linear follows from a similar argument to the one above and indeed is just a repetition of the argument on \cite[page~438]{CKK2019}. We omit the details. That $\|P\|\leq C$ and that $P$ is a projection onto $\M_\sigma^{\mathrm{div}}(E)$ are immediately deducible from the definitions.
	\end{proof}
	\begin{proof}[Proof of Theorem~\ref{t:C-complementation}]
		By Theorems~\ref{thm:comp1} and \ref{thm:comp2}, $L_1(\R^d,E)/Y(E)$ is $C$-complemented in $\M_{ac}(\R^d,E)/X(E)^\perp$. By \cite[page~421]{CKK2019}, $\F(E)^{**}$ is linearly isometrically identified with $\M_{ac}(\R^d,E)/X(E)^\perp$ in such a way that $\F(E)$ becomes identified with $L_1(\R^d,E)/Y(E)$. Hence, $\F(E)$ is $C$-complemented in $\F(E)^{**}$.
	\end{proof}

	\noindent
	\textbf{Acknowledgements.} We wish to thank Gilles Godefroy for insightful discussions on nicely placed subspaces of $L_1$.
	
	This paper was written under the auspices of the American Institute of Mathematics (AIM) SQuaREs program as part of the “Nonlinear geometry of Banach spaces” SQuaRE project.


\begin{thebibliography}{99}
		
		\bibitem{BaudierLancienBook} F. Baudier and G. Lancien, \textit{Asymptotic and nonlinear geometries of Banach spaces and their interactions}, Cours Spécialisés, \textbf{33}, Soc. Math. France, Paris, (2026).
		
		\bibitem{Bossard2002}
		B. Bossard,
		\textit{A coding of separable Banach space. Analytic and coanalytic families of Banach spaces},
		Fundam. Math. \textbf{172} (2002), 117--151.
		
		\bibitem{BGLPPS2026} B. Braga, C. Gartland, G. Lancien, P. Motakis, E. Perneck\'{a} and Th. Schlumprecht, \textit{On the $L$-embeddability of Lipschitz-free spaces}, preprint arXiv: https://arxiv.org/pdf/2610.08616
		
		\bibitem{CKK2019}
		M. C{\'u}th, O. Kalenda and P. and Kaplick{\'y}, \textit{Finitely additive measures and complementability of {Lipschitz}-free spaces}, Isr. J. Math., \textbf{230} (2019), no. 1, 409--442.
		
		\bibitem{Golab} A. Giovanni and M. Ottolini,
		\textit{On the structure of continua with finite length and Golab's semicontinuity theorem}, Nonlinear Anal., \textbf{153} (2017), 35--55. 
		
		\bibitem{Godard2010PAMS} A. Godard, \textit{Tree metrics and their Lipschitz-free spaces}, Proc. Am. Math. Soc., \textbf{138} (2010), no. 12, 4311--4320. 
		
		\bibitem{Godefroy} G. Godefroy, \textit{Sous-espaces bien dispos\'es de $L^1$ - Applications}, Trans.  Amer. Math. Soc., \textbf{286} (1984), no. 1, 227--249. 
		
		\bibitem{BaireGodefroy} G. Godefroy, \textit{Applications of Baire methods}, Cours Spécialisés, Soc. Math. France, Paris, to appear.
		
		\bibitem{GodefroyKalton2003} G. Godefroy and N.\,J. Kalton, \textit{Lipschitz-free Banach spaces}, Studia Math., {\bf 59} (2003), 121--141.
		
		\bibitem{GodefroyTalagrand} G. Godefroy and M. Talagrand,
		\textit{Nouvelles classes d'espaces de {Banach} {\`a} pr{\'e}dual unique}, S\'eminaire
		d’Analyse Fonctionnelle de l’\'Ecole Polytechnique, (1980-81).
		
		\bibitem{HWW} P. Harmand, D. Werner and W. Werner, \textit{$M$-ideals in Banach spaces and Banach algebras}, Lecture Notes in Mathematics, vol. 1547, Springer-Verlag, Berlin, 1993. 
		
		\bibitem{KechrisBook1995}
		A. Kechris, \textit{Classical descriptive set theory}, Grad. Texts Math., Springer-Verlag, Berlin, 1995. 
		
		\bibitem{Kirchheim} B. Kirchheim, \textit{Rectifiable metric spaces: local structure and regularity of the Hausdorff measure}, Proc. Amer. Math. Soc., \textbf{121} (1994), no. 1, 113--123.
		
		
		\bibitem{Mattila_Book} P. Mattila, \textit{Geometry of sets and measures in {E}uclidean spaces}, Cambridge Studies in Advanced Mathematics, vol. {44}, Cambridge University Press, Cambridge, 1995.
		
		\bibitem{Pfitzner} H. Pfitzner, \textit{Separable {L}-embedded {Banach} spaces are unique preduals}, {Bull. Lond. Math. Soc.}, \textbf{39} (2007), no. 6, 1039--1044.
		
		
		\bibitem{Waz} T. Wazewski, \textit{Kontinua prostowalne w zwiazku z funkcjami i odwzorowaniami absolutnie ciaglemi}, Ann. Soc. Polon. Math., \textbf{3} (1927), 9--49. 
		
		
		\bibitem{Weaver2} N. Weaver, \textit{Lipschitz algebras 2nd ed.}, World Scientific Publishing Co., River Edge, NJ, 2018.
		
		
	\end{thebibliography}
\end{document}